\newtheorem{theorem}{Theorem}[section]
\newtheorem{lemma}[theorem]{Lemma}
\newtheorem{proposition}[theorem]{Proposition}
\newtheorem{claim}[theorem]{Claim}
\newtheorem{cor}[theorem]{Corollary}
\theoremstyle{definition} 
\newtheorem{definition}[theorem]{Definition}
\newtheorem{example}[theorem]{Example}
\newtheorem{conjecture}[theorem]{Conjecture}
\newtheorem{remark}[theorem]{Remark}
\numberwithin{equation}{section}
\DeclareMathOperator{\Stab}{Stab}%
\DeclareMathOperator{\Isom}{Isom}%
\DeclareMathOperator{\Out}{Out}%
\DeclareMathOperator{\Aut}{Aut}%
\DeclareMathOperator{\BBT}{BBT}%
\DeclareMathOperator{\inter}{int}%
\DeclareMathOperator{\rk}{rk}%
\newcommand{\rank}{\mathfrak{n}}%
\newcommand{\alert}{\textcolor{red}}%
\newcommand{\la}{\langle}%
\newcommand{\ra}{\rangle}%
\newcommand{\CyclicS}{\mathcal{FZ}}%
\newcommand{\FreeS}{\mathcal{FS}}%
\newcommand{\m}{\text{(max)}}%
\newcommand{\FreeF}{\mathcal{FF}}%
\newcommand{\ffa}{\mathcal{A}}%
\newcommand{\free}{\mathbb{F}}%
\newcommand{\cv}{\mathcal{O}}%
\newcommand{\cvclo}{\overline{\mathcal{O}}}%
\newcommand{\CV}{\mathbb{P}\mathcal{O}}%
\newcommand{\CVclo}{\overline{\mathbb{P} \mathcal{O}}}%
\newcommand{\CVbound}{\partial \mathbb{P} \mathcal{O}}%
\newcommand{\relcv}{\mathcal{O}(\free, \mathcal{A})}%
\newcommand{\relCVClo}{\overline{\mathbb{P}\mathcal{O}(\free,
    \mathcal{A})}} 
\newcommand{\lamination}{\Lambda_{\phi}^+} 
\newcommand{\Rlamination}{\Lambda^-_{\phi}} 
\newcommand{\StableTree}{T^+_{\phi}}
\newcommand{\UnstableTree}{T^-_{\phi}}
\newcommand{\PMTrees}{T^{\pm}_{\phi}}
\newcommand{\PMlaminations}{\Lambda^{\pm}_{\phi}}
\newcommand{\MPlaminations}{\Lambda^{\mp}_{\phi}}
\newcommand{\PFevalue}{\lambda_{\phi}}
\newcommand{\Rfltr}{\emptyset = G_0 \subset G_1 \subset \cdots \subset
  G_r = G} %
\newcommand{\fltr}{\emptyset = G_0 \subset G_1 \subset
  \cdots \subset G_M = G}
\title{Loxodromic elements in the cyclic splitting complex and their
  centralizers}
\author{Radhika Gupta}
\address{\tt R.\ Gupta, Department of Mathematics, Technion, \,
  \newline Haifa, Israel, 32000
  \newline http://www.math.utah.edu/\~{}gupta/} %
\email{\tt radhikagup@technion.ac.il}
\author{Derrick Wigglesworth} %
\address{\tt D.\ Wigglesworth, Department of Mathematical Sciences, University
  of Arkansas, 309 SCEN, Fayetteville, AR 72703, U.S.A.
  \newline http://www.math.utah.edu/$\sim$dwiggles/} %
\email{\tt drwiggle@uark.edu}
\thanks{\today}
\begin{document}

\begin{abstract}
  We show that an outer automorphism acts loxodromically on the cyclic
  splitting complex if and only if it has a filling lamination and no
  generic leaf of the lamination is carried by a vertex group of a
  cyclic splitting.  This is the analog for the cyclic splitting
  complex of Handel-Mosher's theorem on loxodromics for the free
  splitting
  complex. 
  We also show that such outer automorphisms have virtually cyclic
  centralizers.
\end{abstract}

\thanks{ Both authors are partially supported by the U.S.\ National
  Science Foundation grant of Mladen Bestvina (DMS-1607236).}

\subjclass[2010]{Primary 20F65; Secondary 20F28, 20E05, 57M07.}

\maketitle
\section{Introduction}
\label{sec:intro}

The study of the mapping class group of a closed orientable surface
$S$ has benefited greatly from its action on the curve complex,
$\mathcal{C}(S)$, which was shown to be hyperbolic in
\cite{MM:CurveComplex}.  Curve complexes have been used for bounded
cohomology of subgroups of mapping class groups, rigidity results, and
myriad other applications.

The outer automorphism group of a finite rank free group $\free$,
denoted by $\Out(\free)$, is defined as the quotient of $\Aut(\free)$
by the inner automorphisms, those which arise from conjugation by a
fixed element.  Much of the study of $\Out(\free)$ draws parallels
with the study of mapping class groups.  This analogy, however, is far
from perfect; there are several $\Out(\free)$-complexes that act as
analogs for the curve complex.  Among them are the free splitting
complex $\FreeS$, the cyclic splitting complex $\CyclicS$, and the
free factor complex $\FreeF$, all of which have been shown to be
hyperbolic \cite{HM:FreeSplittingComplex, M:CyclicS,
  BF:FreeFactorComplex}.  Just as curve complexes have yielded useful
information about mapping class groups, so too have these complexes
furthered our understanding of $\Out(\free)$.

The three hyperbolic $\Out(\free)$-complexes mentioned above are
related via coarse Lipschitz maps, $\FreeS \to \CyclicS \to \FreeF$. The
loxodromics for $\FreeF$ have been identified with the set of fully
irreducible outer automorphisms \cite{BF:FreeFactorComplex}.  In
\cite{HM:FreeSplittingComplexII}, the authors proved that an outer
automorphism, $\phi$, acts loxodromically on $\FreeS$ precisely when
$\phi$ has a \emph{filling lamination}, that is, some element of the
finite set of laminations associated to $\phi$ (see \cite{BFH:Tits})
is not carried by a vertex group of any free splitting.  In this
paper, we focus our attention on the isometry type of outer
automorphisms, considered as elements of $\Isom(\CyclicS)$.

A $\mathcal{Z}$-splitting of $\free$ is a splitting in which edge
stabilizers are either trivial or cyclic.  The cyclic splitting
complex $\CyclicS$, introduced in \cite{M:CyclicS}, is defined as
follows (see Section~\ref{subsec:CyclicS}): vertices are one-edge
$\mathcal{Z}$-splittings of $\free$ and $k$-simplicies correspond to
collections of $k+1$ vertices which are compatible with a common
$k+1$-edge $\mathcal{Z}$-splitting.  In this paper, we determine
precisely which outer automorphisms act loxodromically on $\CyclicS$.
Closely related to $\mathcal{Z}$-splittings are the maximally-cyclic
splittings, called $\mathcal{Z}^{\text{max}}$-splittings, in which the
edge groups are required to be trivial or maximal cyclic (i.e., not
contained in a larger cyclic subgroup).  The results of this paper
also apply to the maximally-cyclic splitting complex
$\CyclicS^{\text{max}}$ which is defined exactly as $\CyclicS$ except
that splittings are required to be in the class
$\mathcal{Z}^{\text{max}}$.  We will use the notation $\CyclicS^{\m}$
to mean either $\CyclicS$ or $\CyclicS^{\text{max}}$.

In \cite{BFH:Tits}, the authors associate to each $\phi\in\Out(\free)$
a finite set of attracting laminations, denoted by
$\mathcal{L}(\phi)$.  We say that a lamination
$\Lambda\in\mathcal{L}(\phi)$ is \emph{$\mathcal{Z}^{\m}$-filling} if
no generic leaf (see \S\ref{sec:lam} for definitions) of $\Lambda$ is
carried by a vertex group of a one-edge $\mathcal{Z}^{\m}$-splitting;
we say that $\phi$ has a $\mathcal{Z}^{\m}$-filling lamination if some
element of $\mathcal{L}(\phi)$ is $\mathcal{Z}^{\m}$-filling.  We
prove

\begin{theorem}\label{thm:loxodromics}
  For a free group of rank at least 3, an outer automorphism $\phi$
  acts loxodromically on $\CyclicS^{\m}$ if and only if it has a
  $\mathcal{Z}^{\m}$-filling lamination.  Furthermore, if $\phi$ has a
  filling lamination which is not $\mathcal{Z}^{\m}$-filling, then a
  power of $\phi$ fixes a point in $\CyclicS^{\m}$.
\end{theorem}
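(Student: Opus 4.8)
The strategy is to leverage the known result of Handel–Mosher for the free splitting complex $\FreeS$ together with the coarse Lipschitz map $\FreeS \to \CyclicS^{\m}$ and a Bestvina–Feighn-style acylindricity/hyperbolicity argument, reducing the loxodromic classification on $\CyclicS^{\m}$ to a statement about which leaves of the attracting laminations can be carried by $\mathcal{Z}^{\m}$-vertex groups. First I would set up the easy direction: if $\phi$ has no filling lamination at all, then by Handel–Mosher $\phi$ is not loxodromic on $\FreeS$, and since $\FreeS \to \CyclicS^{\m}$ is coarsely Lipschitz and $\Out(\free)$-equivariant, $\phi$ is elliptic or parabolic on $\CyclicS^{\m}$; since $\CyclicS^{\m}$ is hyperbolic and the action is by a group element, non-loxodromic means bounded orbits, hence (after passing to a power, using that $\CyclicS^{\m}$ is finite-dimensional / has finitely many orbits of simplices) a power fixes a point. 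The more delicate sub-case of the easy direction is when $\phi$ has a filling lamination $\Lambda$ that is carried by a vertex group of some one-edge $\mathcal{Z}^{\m}$-splitting $S$: here I would argue that $\phi$ (or a power) virtually preserves the conjugacy class of that vertex group — because the lamination, hence its carrying subgroup structure, is a $\phi$-invariant, and the relevant subgroups assemble into a canonical structure (a ``vertex group system'' in the sense of the relative free factor theory) that $\phi$ permutes on a finite set — and thus a power of $\phi$ fixes the vertex $[S] \in \CyclicS^{\m}$. This gives the ``furthermore'' clause and completes one implication.

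For the hard direction — if $\phi$ has a $\mathcal{Z}^{\m}$-filling lamination then $\phi$ acts loxodromically on $\CyclicS^{\m}$ — the plan is to produce a quasi-geodesic axis. The natural candidate is the image under $\FreeS \to \CyclicS^{\m}$ of a Handel–Mosher axis for $\phi$ in $\FreeS$ (which exists because a $\mathcal{Z}^{\m}$-filling lamination is in particular filling). The issue is that coarse Lipschitz maps do not preserve quasi-geodesics, so one cannot simply push forward; instead I would show the image is a \emph{quasi-geodesic} by verifying a local-to-global / contraction criterion directly on $\CyclicS^{\m}$, using the hyperbolicity of $\CyclicS^{\m}$ (from \cite{M:CyclicS}) and Manning's bottleneck criterion or a Morse-type argument. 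The key geometric input is a ``no backtracking'' statement: if the $\CyclicS^{\m}$-image of the axis made a long excursion and came back, one could find two one-edge $\mathcal{Z}^{\m}$-splittings far apart along the axis but close in $\CyclicS^{\m}$, hence compatible with a common $\mathcal{Z}^{\m}$-splitting, and then folding/train-track combinatorics would force a generic leaf of $\Lambda$ to be carried by a vertex group of that splitting — contradicting $\mathcal{Z}^{\m}$-filling. Making this precise requires controlling how laminations sit in $\mathcal{Z}^{\m}$-splittings: specifically a lemma that a leaf of an attracting lamination which crosses an edge of a $\mathcal{Z}^{\m}$-splitting with the bounded-cancellation constant under control is carried by a vertex group iff its ``$\Lambda$-trace'' avoids the edge orbit.

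I expect the main obstacle to be precisely this lamination-vs-$\mathcal{Z}^{\m}$-splitting dictionary and the accompanying bounded cancellation bookkeeping: unlike the free splitting case where edge groups are trivial, here edge groups are (maximal) cyclic, so a train track map may partially carry a leaf into an edge group, and one must rule out that this accumulates. I would handle this by working with relative train track representatives and the $\free$-tree $\StableTree$ dual to $\Lambda$, showing that $\mathcal{Z}^{\m}$-filling is equivalent to $\StableTree$ (or its $\phi$-invariant structure) not being ``compatible'' with any $\mathcal{Z}^{\m}$-splitting, and then invoking the analog of Handel–Mosher's ``ping-pong on currents/trees'' to get the quasi-geodesic. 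A secondary, more technical obstacle is the passage to powers in the ``furthermore'' clause: one must ensure the fixed point in $\CyclicS^{\m}$ is actually realized by a one-edge $\mathcal{Z}^{\m}$-splitting (a vertex), which requires knowing the carrying subgroup of $\Lambda$ is itself a vertex group of such a splitting — true when $\free$ has rank at least $3$ so that a proper carrying free factor or cyclic subgroup gives a genuine one-edge $\mathcal{Z}^{\m}$-splitting — and this is where the rank hypothesis is used.
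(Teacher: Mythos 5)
Your high-level decomposition matches the paper's, but there are genuine gaps in your ``furthermore''/converse argument, and your hard direction diverges substantively from the paper's route.

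The most serious gap is in the ``furthermore'' clause. You assert that the carrying subgroup structure ``assembles into a canonical structure $\dots$ that $\phi$ permutes on a finite set,'' but this finiteness is exactly what needs proof: a priori there may be infinitely many one-edge $\mathcal{Z}^{\m}$-splittings in which $\lamination$ is elliptic, and $\phi$-invariance of the lamination does not by itself produce a finite $\phi$-invariant subset of them. The paper closes this gap via JSJ theory applied to precisely that class of splittings: Lemma~\ref{lem:HH-or-EE} shows any two are hyperbolic-hyperbolic or elliptic-elliptic (the dichotomy needed for the Fujiwara--Papasoglu construction), Lemma~\ref{lem:Lamination-Elliptic} shows the class is closed under the refinement/enclosing moves that build the JSJ, and slide-move finiteness in a non-ascending deformation space (Theorem~\ref{thm:slide moves}, Lemma~\ref{lem:slide moves}) yields a finite $\phi$-invariant set of reduced splittings, one of which a power of $\phi$ fixes. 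Your step ``non-loxodromic $\Rightarrow$ bounded orbits $\Rightarrow$ fixed point for a power'' is also unsound as an a priori argument: ruling out horocyclic/parabolic behavior is a \emph{consequence} of the theorem (Corollary~\ref{thm:classification}), not an input, and bounded orbits of a cyclic group on a non-proper hyperbolic complex do not automatically give a fixed simplex (nor does $\CyclicS^{\m}$ have finitely many $\Out(\free)$-orbits of vertices). Finally, fixing a conjugacy class of vertex groups is not the same as fixing a vertex of $\CyclicS^{\m}$: one must produce a fixed \emph{splitting}, which the deformation-space argument does directly.

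For the hard direction you propose pushing forward a Handel--Mosher $\FreeS$-axis through $\FreeS\to\CyclicS^{\m}$ and verifying quasi-geodesicity via a bottleneck/Morse argument. This would require delicate metric estimates on $\CyclicS^{\m}$ that you correctly flag as hard; moreover, the ``no backtracking'' statement you need is essentially the $\mathcal{Z}^{\m}$-aversity of $\StableTree$, which you would have to prove anyway. The paper bypasses quasi-geodesic estimates entirely by invoking Horbez's identification of $\partial_\infty\CyclicS^{\m}$ with equivalence classes of $\mathcal{Z}^{\m}$-averse trees (Theorem~\ref{thm:horbez}): one constructs a folding path in $\CVbound$ converging to $\StableTree$ (Section~\ref{sec:folding-seq}; the path must live in the boundary since the train track is relative, which is itself a nontrivial point), shows via a transverse covering generated by leaf segments of $\lamination$ that $\StableTree$ is mixing (Lemma~\ref{lem:mixing}) and $\mathcal{Z}^{\m}$-averse (Proposition~\ref{prop:T-Z-averse}), distinguishes $\StableTree$ from $\UnstableTree$ in $\mathcal{X}^{\m}$ via dual laminations, and concludes loxodromicity from the Gromov classification of isometries (Theorem~\ref{thm:hyp-isom}) since $\langle\phi\rangle$ has two limit points in $\partial_\infty\CyclicS^{\m}$. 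The boundary theorem is the key tool missing from your plan; it converts the quantitative quasi-geodesic problem into the qualitative aversity result you had already identified as necessary.
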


In \cite{HW:Aut}, Horbez and Wade showed that every isometry of
$\CyclicS^{\m}$ is induced by an outer automorphism.  Combining their
result with \cite[Theorem 1.1]{HM:FreeSplittingComplexII} and Theorem
\ref{thm:loxodromics}, this amounts to a classification of the
isometries of $\CyclicS^{\m}$.

\begin{cor}[Classification of isometries]\label{thm:classification}
  The following hold for all $\phi\in\Isom(\CyclicS^{\m})$.
  \begin{enumerate}
  \item The action of $\phi$ on $\CyclicS^{\m}$ is loxodromic if and
    only if some element of $\mathcal{L}(\phi)$ is
    $\mathcal{Z}^{\m}$-filling.
  \item If the action of $\phi$ on $\CyclicS^{\m}$ is not loxodromic,
    then it has bounded orbits (there are no parabolic isometries).
  \end{enumerate}
\end{cor}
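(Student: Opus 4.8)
The plan is to assemble the corollary from Theorem~\ref{thm:loxodromics} together with two quoted facts: the Horbez--Wade theorem that every isometry of $\CyclicS^{\m}$ is induced by an outer automorphism, and the classification of the $\Out(\free)$-action on $\FreeS$ due to Handel--Mosher. Since $\rk(\free)\geq 3$, Horbez--Wade's surjectivity together with faithfulness of the $\Out(\free)$-action identifies $\Isom(\CyclicS^{\m})$ with $\Out(\free)$, so each $\phi\in\Isom(\CyclicS^{\m})$ is represented by a unique outer automorphism; this is what lets the attracting lamination set $\mathcal{L}(\phi)$ appearing in the statement make sense. Under this identification, part (1) is a verbatim restatement of Theorem~\ref{thm:loxodromics}.

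For part (2) I would show that every non-loxodromic $\phi$ has bounded orbits. By Theorem~\ref{thm:loxodromics}, no element of $\mathcal{L}(\phi)$ is $\mathcal{Z}^{\m}$-filling. If $\phi$ nevertheless has a filling lamination, that lamination witnesses the hypothesis of the ``furthermore'' clause of Theorem~\ref{thm:loxodromics}, so some power $\phi^{k}$ fixes a vertex $v\in\CyclicS^{\m}$. Then for any vertex $x$ and any $n$, writing $s\equiv n\pmod k$ with $0\leq s<k$, the triangle inequality together with $d(\phi^{n}x,\phi^{n}v)=d(x,v)$ and $\phi^{n}v=\phi^{s}v$ gives $d(\phi^{n}x,x)\leq 2d(x,v)+\max_{0\leq t<k}d(\phi^{t}v,v)$, a bound independent of $n$; hence the $\langle\phi\rangle$-orbit of $x$ is bounded.

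If instead $\phi$ has no filling lamination, then by Handel--Mosher $\phi$ does not act loxodromically on $\FreeS$, and therefore, by their classification of the $\FreeS$-action, $\phi$ has bounded orbits on $\FreeS$. A one-edge free splitting is in particular a one-edge $\mathcal{Z}^{\m}$-splitting, and a common refinement of two free splittings is a common $\mathcal{Z}^{\m}$-refinement, so the inclusion of vertex sets $\FreeS^{(0)}\hookrightarrow(\CyclicS^{\m})^{(0)}$ is distance-non-increasing; pushing a bounded $\FreeS$-orbit forward shows that $\phi$ has bounded orbits on $\CyclicS^{\m}$. Thus every non-loxodromic $\phi$ has bounded orbits, which rules out parabolics and finishes part (2).

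I do not anticipate a genuine obstacle here: the corollary is a repackaging of Theorem~\ref{thm:loxodromics} with results quoted as black boxes. The only point requiring care is that the ``furthermore'' clause of Theorem~\ref{thm:loxodromics} covers only outer automorphisms possessing a filling (necessarily non-$\mathcal{Z}^{\m}$-filling) lamination, so the complementary case of an automorphism with no filling lamination at all has to be routed through the $\FreeS$ classification and the distance-non-increasing map $\FreeS\to\CyclicS^{\m}$; one should also confirm that this last map and the identification $\Isom(\CyclicS^{\m})\cong\Out(\free)$ are both available for each of $\CyclicS$ and $\CyclicS^{\text{max}}$.
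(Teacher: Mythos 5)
Your proposal is correct and follows exactly the route the paper itself indicates (the paper gives only the one-sentence sketch that the corollary follows by combining Horbez--Wade, Handel--Mosher's classification of isometries of $\FreeS$, and Theorem~\ref{thm:loxodromics}). You have correctly filled in the two implicit details: that the ``furthermore'' clause handles the filling-but-not-$\mathcal{Z}^{\m}$-filling case via a fixed point of a power, and that the complementary case of no filling lamination must be routed through the Handel--Mosher classification on $\FreeS$ together with the (coarse) Lipschitz map $\FreeS\to\CyclicS^{\m}$.
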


The proof of Theorem~\ref{thm:loxodromics} relies on the description
of the boundary of $\CyclicS^{\m}$ due to Horbez
\cite{H:CyclicSBoundary}; points in the boundary of $\CyclicS^{\m}$
are equivalence classes of $\mathcal{Z}^{\m}$-averse trees.  The proof
is carried out as follows.  In Section~\ref{sec:folding-seq}, we
extend the theory of folding paths to the boundary of Culler \&
Vogtmann's outer space, $\CV$, defining a folding path guided by
$\phi$ which is entirely contained in $\CVbound$. In
Section~\ref{sec:mixing}, we show that the limit of the folding path
thus constructed is $\mathcal{Z}^{\m}$-averse. In
Section~\ref{sec:filling not Z-filling}, we show that an outer
automorphism with a filling but not $\mathcal{Z}^{\m}$-filling
lamination fixes (up to taking a power) a point in $\CyclicS^{\m}$ and
conclude with a proof of Theorem~\ref{thm:loxodromics}.

The remainder of the paper is devoted to a study of the centralizers
of automorphisms with filling laminations.  We prove the following
result:

\begin{theorem}\label{thm:centralizers}
  If an outer automorphism $\phi$ has a $\mathcal{Z}$-filling
  lamination, then its centralizer in $\Out(\free)$ is virtually
  cyclic.  Conversely, if $\phi$ has a filling but not
  $\mathcal{Z}$-filling lamination, then the centralizer of some power
  of $\phi$ in $\Out(\free)$ is not virtually cyclic.
\end{theorem}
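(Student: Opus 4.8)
The plan is to prove the two implications of Theorem~\ref{thm:centralizers} separately, using Theorem~\ref{thm:loxodromics} as the essential input in each.

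\emph{If $\phi$ has a $\mathcal{Z}$-filling lamination, then the centralizer $C(\phi)$ is virtually cyclic.} By Theorem~\ref{thm:loxodromics}, $\phi$ acts loxodromically on $\CyclicS$, and by the folding-path construction of Sections~\ref{sec:folding-seq}--\ref{sec:mixing} the two endpoints of its quasi-axis in $\partial\CyclicS$ are the equivalence classes of the $\mathcal{Z}^{\m}$-averse limit trees $\StableTree$ and $\UnstableTree$, each with dense $\free$-orbits. The first step is the standard observation that any $\psi$ centralizing $\phi$ carries a quasi-axis of $\phi$ to a quasi-axis of $\psi\phi\psi^{-1}=\phi$, hence permutes the unordered pair $\{[\StableTree],[\UnstableTree]\}$; after passing to an index-at-most-two subgroup $C_0\leq C(\phi)$, every element of $C_0$ fixes each of $[\StableTree]$ and $[\UnstableTree]$, so it suffices to show that the stabilizer in $\Out(\free)$ of the pair $([\StableTree],[\UnstableTree])$ is virtually cyclic. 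For this I would use two structural facts: since $\lamination$ is filling, $\StableTree$ is not carried by a proper free factor; and since $\StableTree$ is $\mathcal{Z}^{\m}$-averse it is non-geometric (a geometric arational tree is dual to a measured foliation on a compact surface with boundary and so is compatible with the cyclic splitting along a boundary curve). A tree with dense orbits that is non-geometric is rigid: its projective length function determines it, and the group of $\Out(\free)$-symmetries realized by homotheties of ratio $1$ is finite. Hence the pair-stabilizer admits a homomorphism to $\mathbb{R}^2$ recording the two homothety stretch factors, with finite kernel. The core of the argument, and the step I expect to be the main obstacle, is to show that the image of this homomorphism is cyclic; I anticipate deducing this from the expansion-factor machinery of \cite{BFH:Tits} applied to the lamination pair $(\lamination,\Rlamination)$ together with discreteness of the set of achievable expansion factors, which rules out a rank-two image and hence a copy of $\mathbb{Z}^2$ in $C(\phi)$. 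Granting it, $C_0$, and therefore $C(\phi)$, is virtually cyclic.

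\emph{If $\phi$ has a filling but not $\mathcal{Z}$-filling lamination, then $C(\phi^m)$ is not virtually cyclic for some $m$.} By the second assertion of Theorem~\ref{thm:loxodromics} (proved in Section~\ref{sec:filling not Z-filling}), some power $\phi^k$ fixes a vertex of $\CyclicS$; equivalently, $\phi^k$ preserves a reduced one-edge $\mathcal{Z}$-splitting $S$ of $\free$ whose edge group $C=\langle c\rangle$ is infinite cyclic. Let $\tau\in\Out(\free)$ be the Dehn twist along the edge of $S$ about $c$. Since $\phi^k$ preserves $S$, it fixes the conjugacy class of $C$ up to inversion and up to interchanging the two sides of $S$; replacing $\phi^k$ by a suitable power $\phi^m$, the automorphism $\phi^m\tau\phi^{-m}$ is the Dehn twist along $S$ about a conjugate of $c$, which equals $\tau$ in $\Out(\free)$, so $\tau$ and $\phi^m$ commute. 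Now $\tau$ has infinite order because $S$ is a genuine reduced splitting (the subgroup of $\free$ centralizing the fixed vertex-group data is trivial), and $\tau$ has linear growth, whereas $\phi^m$ has exponential growth (its set of attracting laminations is nonempty); hence $\tau$ and $\phi^m$ are non-commensurable elements of infinite order, so the finitely generated abelian group $\langle\phi^m,\tau\rangle$ has rank at least two. Since $\langle\phi^m,\tau\rangle\leq C(\phi^m)$, the centralizer of $\phi^m$ is not virtually cyclic.

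The forward implication is the substantial one, and within it the delicate point is exactly the statement that the two trees $\StableTree,\UnstableTree$, which together fill $\free$, have a virtually cyclic common stabilizer — i.e., cyclicity of the image of the stretch-factor homomorphism. The converse implication is comparatively soft: the only things to be careful about are passing to a power of $\phi$ that commutes with the twist on the nose, and verifying that the twist has infinite order and polynomial growth, so that it is manifestly independent of $\phi^m$.
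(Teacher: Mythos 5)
Your converse direction follows the same route as the paper's Proposition~\ref{prop:converse}: use Proposition~\ref{prop:v-group} to get a $\phi^k$-invariant one-edge cyclic splitting $S$, take the Dehn twist about the edge, and check commutativity. The paper verifies commutativity by hand with explicit generating sets in the amalgam and HNN cases; your conjugation argument (twist about $\Phi^m(c)$ equals the twist about a conjugate of $c^{\pm 1}$, then pass to a further power to kill the possible inversion) is the same computation in disguise. You are actually somewhat more careful than the paper about non-commensurability of $\tau$ and $\phi^m$, invoking linear versus exponential growth, which the paper leaves implicit, so this half is fine.

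The forward direction is where the genuine difference lies, and where your proposal has a real gap. You propose the ``classical'' strategy modeled on Thurston for surfaces and on \cite{BFH:Tits} for fully irreducibles: show $C(\phi)$ virtually stabilizes the pair $\{[\StableTree],[\UnstableTree]\}$, then show this pair-stabilizer is virtually cyclic via a stretch-factor homomorphism with finite kernel and discrete (hence cyclic) image. The intermediate claims you need are not off-the-shelf: the assertion that a $\mathcal{Z}^{\m}$-averse tree with dense orbits is non-geometric requires an argument (arational geometric trees are of surface type and hence compatible with a boundary-curve splitting, but you would need to establish that $\StableTree$ is arational, not just mixing/$\mathcal{Z}$-averse); the ``rigidity'' of non-geometric trees with dense orbits, in the strong form that the homothety-ratio homomorphism has finite kernel, is not a theorem you can cite as stated; and the discreteness of achievable expansion factors for the pair $(\lamination,\Rlamination)$ in this relative, non-fully-irreducible setting is exactly the kind of thing that is known for fully irreducibles (\cite{BFH:Tits}) but is \emph{not} established here. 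You flag this last point yourself as ``the step I expect to be the main obstacle,'' and that obstacle is essentially the content of the theorem. The paper avoids this entirely by a very different mechanism: it takes a putative $\mathbb{Z}^2 \leq C(\phi)$, passes to a rotationless finite-index subgroup, picks a generic element $\psi'$ (so $\psi'$ still has a $\mathcal{Z}$-filling lamination), and invokes \cite[Theorem~7.2]{FH:Abelian} to see that the disintegration $\mathcal{D}(\psi')$ would then contain a $\mathbb{Z}^2$ up to finite index. The real work is Proposition~\ref{prop:D-cyclic}: a careful CT/leaf-structure analysis showing that for a rotationless $\phi$ with $\mathcal{Z}$-filling lamination, the graph $B$ has one main component (containing all EG and nonlinear NEG strata, by Lemmas~\ref{lem:NEGNL-B} and~\ref{lem:EG-B}) plus isolated linear-NEG vertices, and the admissibility relations collapse the space of admissible tuples to a line, so $\mathcal{D}(\phi)$ is virtually cyclic. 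This replaces your limit-tree rigidity and stretch-factor argument with finite combinatorics of completely split train tracks, and it is precisely the step that makes the forward direction go through without needing any unproven rigidity statements about $\StableTree$.
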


The key tools used to prove Theorem~\ref{thm:centralizers} are the
completely split train tracks introduced in
\cite{FH:RecognitionTheorem} and the disintegration theory for outer
automorphisms developed in \cite{FH:Abelian}.  We first show
(Proposition~\ref{prop:D-cyclic}) that the disintegration of any outer
automorphism $\phi$, that has a $\mathcal{Z}$-filling lamination, is
virtually cyclic. Then we show that Proposition~\ref{prop:D-cyclic}
implies the centralizer of $\phi$ is also virtually cyclic.
Conversely, in Proposition~\ref{prop:converse}, we show that if $\phi$
has a filling lamination that is not $\mathcal{Z}$-filling, then
$\phi$ commutes with an appropriately chosen partial conjugation.

The method used to prove Theorem~\ref{thm:centralizers} provides
alternate (and simple) proof of the well-known fact due to Bestvina,
Feighn and Handel that centralizers of fully irreducible outer
automorphisms are virtually cyclic.  In \cite{BFH:Tits}, the stretch
factor homomorphism is used to show that the stabilizer of the
lamination of a fully irreducible outer automorphism is virtually
cyclic, which implies that the centralizer is also virtually cyclic.
In general, little is known about the centralizers of outer
automorphisms.  In \cite{RW:DehnTwist}, Rodenhausen and Wade describe
an algorithm to find a presentation of the centralizer of a Dehn Twist
automorphism.  In \cite{FH:Abelian}, Feighn and Handel show that the
disintegration of an outer automorphism $\mathcal{D}(\phi)$ is
contained in the weak center of the centralizer of $\phi$.  Recently,
Algom-Kfir and Pfaff showed \cite{AP:NormalizerCentralizer} that
centralizers of fully irreducible outer automorphisms with lone axes
are isomorphic to $\mathbb{Z}$.  We also mention a result of Kapovich
and Lustig \cite{KL:StabTree}: automorphisms whose limiting trees are
free have virtually cyclic centralizers.

The main motivation for examining the centralizers of loxodromic
elements of $\CyclicS$ (and $\FreeS$) is to understand which
automorphisms have the potential to be WPD elements for the action of
$\Out(\free)$ on these complexes.

\begin{cor}
  Any outer automorphism that is loxodromic for the action of
  $\Out(\free)$ on $\FreeS$ but elliptic for the action on $\CyclicS$
  is not a WPD element for the action on $\FreeS$.
\end{cor}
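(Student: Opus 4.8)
The plan is to chain together the two main theorems of this paper with the Bestvina--Fujiwara structure theory of WPD elements. Suppose $\phi \in \Out(\free)$ acts loxodromically on $\FreeS$ and elliptically on $\CyclicS$ --- equivalently, by Corollary~\ref{thm:classification}, with bounded orbits on $\CyclicS$. By Handel--Mosher's theorem \cite[Theorem 1.1]{HM:FreeSplittingComplexII}, loxodromicity on $\FreeS$ means precisely that $\phi$ has a filling lamination $\Lambda \in \mathcal{L}(\phi)$. On the other hand, since $\phi$ does not act loxodromically on $\CyclicS$, Theorem~\ref{thm:loxodromics} (applied with $\CyclicS$) tells us that no element of $\mathcal{L}(\phi)$ is $\mathcal{Z}$-filling; in particular $\Lambda$ is a filling lamination that is not $\mathcal{Z}$-filling. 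The converse half of Theorem~\ref{thm:centralizers} then provides a power $\phi^k$ whose centralizer in $\Out(\free)$ is \emph{not} virtually cyclic.

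The remaining ingredient is the standard fact that if a group $G$ acts on a Gromov hyperbolic space and $g \in G$ is a loxodromic WPD element, then every positive power $g^k$ is again loxodromic and WPD, and $g$ lies in a unique maximal virtually cyclic subgroup $E(g) = E(g^k)$; consequently the centralizer $C_G(g^k)$ is contained in $E(g^k)$ and hence virtually cyclic for every $k$ (Bestvina--Fujiwara; see also Dahmani--Guirardel--Osin). Since $\FreeS$ is hyperbolic \cite{HM:FreeSplittingComplex} and carries an action of $\Out(\free)$, if $\phi$ were a WPD element for this action then $C_{\Out(\free)}(\phi^k)$ would be virtually cyclic for all $k$, contradicting the conclusion of the previous paragraph. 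Therefore $\phi$ is not a WPD element for the action on $\FreeS$.

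I do not expect a real obstacle: the corollary is a short deduction, valid for $\free$ of rank at least $3$ (where Theorem~\ref{thm:loxodromics} applies). The only points requiring care are reading ``$\phi$ elliptic on $\CyclicS$'' as ``$\phi$ not loxodromic on $\CyclicS$'' (so that Theorem~\ref{thm:loxodromics} excludes a $\mathcal{Z}$-filling element of $\mathcal{L}(\phi)$), and invoking Theorem~\ref{thm:centralizers} for the class $\mathcal{Z}$ rather than $\mathcal{Z}^{\text{max}}$, which matches the hypothesis exactly. Everything else is the familiar principle that WPD loxodromics, and all their powers, have virtually cyclic centralizers.
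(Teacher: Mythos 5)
Your proof is correct and is precisely the argument the paper intends (the corollary is stated immediately after Theorem~\ref{thm:centralizers} without an explicit proof): combine Handel--Mosher's characterization of $\FreeS$-loxodromics, Theorem~\ref{thm:loxodromics} for $\CyclicS$, the converse half of Theorem~\ref{thm:centralizers}, and the standard fact that a WPD loxodromic and all its powers have virtually cyclic centralizers. Your care in distinguishing $\mathcal{Z}$ from $\mathcal{Z}^{\text{max}}$ and in reading ``elliptic'' as ``not loxodromic'' via Corollary~\ref{thm:classification} is exactly right.
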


The result that centralizers of loxodromic elements of $\CyclicS$ are
virtually cyclic is a promising sign for the following conjecture:

\begin{conjecture}
  The action of $\Out(\free)$ on $\CyclicS$ is a WPD action.  That is,
  every loxodromic element for the action satisfies WPD.
\end{conjecture}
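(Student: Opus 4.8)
The plan is to run the standard strategy for establishing that a loxodromic isometry satisfies WPD: produce a quasi-axis, show it has a contraction (fellow-traveling) property, identify its coarse stabilizer as a virtually cyclic group, and then upgrade virtual cyclicity to the finiteness condition in the definition of WPD using properness of the $\langle\phi\rangle$-action along the axis. So fix $\phi\in\Out(\free)$ loxodromic on $\CyclicS$. By Theorem~\ref{thm:loxodromics} and the folding-path construction of Section~\ref{sec:folding-seq}, $\phi$ has a $\mathcal{Z}$-filling lamination, attracting and repelling trees $\StableTree,\UnstableTree$, and a folding line $\gamma\subset\CVbound$ whose image in $\CyclicS$ is a $\phi$-invariant quasi-geodesic; this image is the quasi-axis $A_\phi$, and its two ideal endpoints are the points of $\partial\CyclicS$ determined (via Horbez's description \cite{H:CyclicSBoundary}) by the $\mathcal{Z}$-equivalence classes of $\StableTree$ and $\UnstableTree$.

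The first substantive step is a contraction/fellow-traveling property for $A_\phi$ inside $\CyclicS$: there are constants $L_0$ and, for each $\kappa$, a constant $D=D(\kappa)$ so that if $h\in\Out(\free)$ moves a subsegment of $A_\phi$ of length at least $L_0$ a distance at most $\kappa$, then $d_{\mathrm{Haus}}(hA_\phi, A_\phi)\le D$; in particular $h$ fixes the ordered pair of ideal endpoints $\{[\StableTree],[\UnstableTree]\}$. One would obtain this either by importing the contraction estimates from the hyperbolicity proof for $\CyclicS$ (the folding axis projects coarsely Lipschitzly to itself), checking they survive the Lipschitz map $\FreeS\to\CyclicS$ and Horbez's boundary identification, or more directly by a nested-neighborhood/North--South argument: a long coarsely-fixed subsegment of the folding line forces $h$ to carry the $\mathcal{Z}$-averse trees $\StableTree^{\pm}$ to trees with the same attracting/repelling dynamics, hence to the same boundary points. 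Consequently the coarse stabilizer $\Stab_{\mathrm{coarse}}(A_\phi):=\{h: d_{\mathrm{Haus}}(hA_\phi,A_\phi)<\infty\}$ is contained in $\Stab(\{[\StableTree],[\UnstableTree]\})$, and since these boundary points determine the dual laminations, $\Stab_{\mathrm{coarse}}(A_\phi)\le\Stab(\{\lamination,\Rlamination\})$ where $\lamination,\Rlamination$ are the (now $\mathcal{Z}$-filling) attracting and repelling laminations of $\phi$.

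Next I would prove that $\Stab(\lamination)$ is virtually cyclic whenever $\lamination$ is a $\mathcal{Z}$-filling attracting lamination — a strengthening of Theorem~\ref{thm:centralizers} from the centralizer of $\phi$ to the full lamination stabilizer. Here one uses the expansion-factor homomorphism $\Stab(\lamination)\to\mathbb{R}$ of \cite{BFH:Tits}, whose image is cyclic; for the kernel one runs the disintegration and completely-split train track analysis of Propositions~\ref{prop:D-cyclic} and \ref{prop:converse} on an arbitrary element of the kernel, the point being that the $\mathcal{Z}$-filling hypothesis forbids the partial conjugations and twists that could enlarge the kernel, so the kernel is finite and $\Stab(\lamination)$ is virtually cyclic. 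Since $\langle\phi\rangle\le\Stab_{\mathrm{coarse}}(A_\phi)\le\Stab(\{\lamination,\Rlamination\})$ and $\phi$ has infinite order, $\langle\phi\rangle$ has finite index, say $m$, in $\Stab_{\mathrm{coarse}}(A_\phi)$; fix coset representatives $h_1,\dots,h_m$. To verify WPD, let $\kappa>0$ and choose a basepoint $x\in A_\phi$; pick $N$ so large that the $\phi$-translation length along $A_\phi$ makes the segment of $A_\phi$ between $x$ and $\phi^N x$ have length more than $L_0$ (after accounting for quasi-geodesic constants). If $d(x,hx)\le\kappa$ and $d(\phi^N x,h\phi^N x)\le\kappa$, then by stability of quasi-geodesics $h$ moves that long subsegment a bounded amount, so $h\in\Stab_{\mathrm{coarse}}(A_\phi)$; writing $h=h_i\phi^j$, the single bound $d(x,hx)\le\kappa$ forces $d(h_i^{-1}x,\phi^j x)\le\kappa$, which for each of the finitely many $i$ permits only finitely many $j$ because $\phi^j x$ escapes along $A_\phi$. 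Hence only finitely many such $h$ exist, which is exactly WPD for $\phi$.

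The main obstacle I anticipate is the strengthening in the third step. Theorem~\ref{thm:centralizers} controls $C(\phi)$, but WPD requires controlling every $h$ coarsely preserving $A_\phi$, i.e. the a priori larger group $\Stab(\{\lamination,\Rlamination\})$; analyzing the kernel of the expansion-factor homomorphism — ruling out unexpected symmetries of a $\mathcal{Z}$-filling lamination — seems to demand running the disintegration argument for an arbitrary element of $\Stab(\lamination)$ rather than one commuting with $\phi$, and a genuinely new input may be required to get past the centralizer case. A secondary difficulty is making the contraction property of the folding axis precise directly in $\CyclicS$: the relevant projection estimates in the literature are phrased for $\FreeS$ and $\CV$, so one must either show they descend along $\FreeS\to\CyclicS$ or re-derive them in terms of Horbez's boundary of $\CyclicS$.
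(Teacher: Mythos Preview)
The statement you are attempting to prove is a \emph{conjecture} in the paper, not a theorem: the authors explicitly present it as open, motivated by Theorem~\ref{thm:centralizers} as ``a promising sign.'' There is therefore no proof in the paper to compare your attempt against.

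As for the proposal itself, it is a reasonable outline of the standard WPD strategy, but you have correctly identified that it is not a proof. The two steps you flag as obstacles are genuine gaps, not technicalities. First, Theorem~\ref{thm:centralizers} gives virtual cyclicity of the centralizer $C(\phi)$, whereas WPD requires virtual cyclicity of the coarse stabilizer of the axis, which is a priori the larger group $\Stab(\{\lamination,\Rlamination\})$; the disintegration argument in \S7 is run for elements commuting with $\phi$ and does not obviously extend to arbitrary elements of $\Stab(\lamination)$. Second, the contraction/fellow-traveling property for the image of the folding line in $\CyclicS$ is asserted rather than established: the projection estimates you would need are not in the paper, and importing them from $\FreeS$ or $\CV$ is nontrivial precisely because $\CyclicS$ has extra vertices (cyclic splittings) that $\FreeS$ does not see. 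Until both of these are resolved, the conjecture remains open.
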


\subsection*{Acknowledgements:} We would like to thank Mladen Bestvina
for suggesting this project, sharing his intuition and for the many
discussions we had with him.  We are especially grateful to Carolyn
Abbott for her sustained enthusiasm and interest this project.  We
would also like to thank the referee for many helpful suggestions.
The first author would also like to thank Camille Horbez for his
encouragement in the very early stage of this project.  We also
acknowledge Mathematical Sciences Research Institute (MSRI), Berkeley,
California, where we first became interested in this project.

\section{Preliminaries}
Before proceeding, we fix a free group $\free$ of rank $\geq 3$.

\subsection{Isometries of metric spaces}
Let $X$ be a Gromov hyperbolic metric space.  We say that an infinite
order isometry $g$ of $X$ is \emph{loxodromic} if it acts with
positive translation length on $X$:
$\lim_{N \to \infty} \frac{d(x, g^N(x))}{N}>0$ for some (any)
$x \in X$.  Every loxodromic element has exactly two limit points in
the Gromov boundary of $X$.

Given a group $G$ acting by isometries on the hyperbolic space $X$, we
denote by $\Lambda_X G$ the limit set of $G$ in $\partial_\infty X$,
which is defined as the intersection of $\partial_\infty X$ with the
closure of the orbit of any point in $X$ under the $G$-action.  The
following theorem, essentially due to Gromov, and formulated here for
the case that $G$ is cyclic, gives a classification of isometry groups
of (possibly nonproper) Gromov hyperbolic spaces. A sketch of proof
can be found in \cite[Proposition 3.1]{CCM:AmenableHyperbolicGps}.
\begin{theorem}[{\cite[Section 8.2]{Gromov:HypGps}}]\label{thm:hyp-isom}
  Let $X$ be a hyperbolic geodesic metric space, and let $G$ be a
  cyclic group acting by isometries on $X$. Then $G$ is either
  \begin{itemize}
  \item bounded, i.e.\ all $G$-orbits in $X$ are bounded; in this case
    $\Lambda_X G = \emptyset$, or
  \item horocyclic, i.e.\ $G$ is not bounded and contains no
    loxodromic element; in this case $\Lambda_X G$ is reduced to one
    point, or
  \item lineal, i.e.\ $G$ contains a loxodromic element, and any two
    loxodromic elements have the same fixed points in
    $\partial_\infty X$; in this case $\Lambda_X G$ consists of these
    two points.
  \end{itemize}
\end{theorem}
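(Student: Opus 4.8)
The plan is to recall the standard trichotomy argument, specialized to a cyclic group $G = \langle g\rangle$ acting by isometries on the hyperbolic geodesic space $X$; fix a basepoint $o \in X$. First I would note that the stable translation length $\tau(g) := \lim_{n\to\infty} d(o, g^n o)/n$ exists, by subadditivity of $n \mapsto d(o, g^n o)$, and that whether it vanishes does not depend on $o$. If every $G$-orbit is bounded we are in the first case: the closure of $\{g^n o : n \in \mathbb{Z}\}$ is then bounded in $X$, and since every point of $\partial_\infty X$ is represented by a sequence leaving every bounded set, the orbit has no boundary accumulation points, so $\Lambda_X G = \emptyset$.

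Next I would treat the case $\tau(g) > 0$. Then $g$ is an infinite-order isometry with positive translation length, so by the properties of loxodromic isometries recalled above it has exactly two limit points $g^{+}, g^{-} \in \partial_\infty X$, with $g^n o \to g^{\pm}$ as $n \to \pm\infty$. Since $g^n o \to g^{+}$ as $n \to +\infty$ and $g^n o \to g^{-}$ as $n \to -\infty$, the orbit $(g^n o)_{n \in \mathbb{Z}}$ has no boundary accumulation points other than $g^{+}$ and $g^{-}$; therefore $\Lambda_X G = \{g^{+}, g^{-}\}$. Finally, any loxodromic element of $G$ is a nontrivial power $g^{k}$, whose orbit escapes to $g^{+}$ and to $g^{-}$ (in one order or the other, according to the sign of $k$), so $g^{k}$ has the same pair of fixed points as $g$. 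This is the lineal case.

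It remains to prove that if $G$ is unbounded with $\tau(g) = 0$ then $\Lambda_X G$ consists of a single point; this I expect to be the technical heart. The uniqueness half is the cleaner one, so I would do it first. Suppose $\xi = \lim_i g^{n_i} o$ is a point of $\Lambda_X G$, with $d(o, g^{n_i} o) \to \infty$. For each fixed $m$ the points $g^{n_i + m} o = g^m(g^{n_i} o)$ and $g^{n_i} o$ lie at distance exactly $d(o, g^m o)$, so the Gromov product satisfies $(g^{n_i} o \mid g^{n_i + m} o)_o \ge d(o, g^{n_i} o) - d(o, g^m o) \to \infty$, which forces $g^{n_i + m} o \to \xi$ and hence, since the isometry $g^m$ extends continuously to $\partial_\infty X$, that $g^m \xi = \xi$. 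Thus every point of $\Lambda_X G$ is fixed by all of $G$; and if $\Lambda_X G$ contained two distinct such points $\xi \neq \eta$, a quasigeodesic line joining them would be coarsely $g$-invariant, so $g$ would induce on it a rough isometry of $\mathbb{R}$ fixing both ends, i.e.\ a rough translation by some constant $c$ — and $c \neq 0$ yields $\tau(g) > 0$, while $c = 0$ makes the orbit bounded, in either case a contradiction. Hence $|\Lambda_X G| \le 1$. The genuinely delicate point, and the main obstacle, is the existence half: producing even one boundary point from an unbounded orbit when $X$ is not assumed proper, so that spheres about $o$ need not be compact and escaping sequences need not subconverge. Here I would argue, via Gromov products and $\delta$-hyperbolicity, using the bounded step size $d(g^n o, g^{n+1} o) = d(o,go)$ together with $\tau(g) = 0$, that some subsequence of $(g^n o)$ with indices going to $+\infty$ (or $-\infty$) is Cauchy at infinity and so converges to a boundary point; by the computation above that point is $G$-fixed and therefore the unique element of $\Lambda_X G$, putting us in the horocyclic case. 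The careful execution of this last step — which is where non-properness really bites — is the content of the references cited for the theorem, and I would either reproduce it or simply invoke them.
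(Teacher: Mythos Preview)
The paper does not give its own proof of this theorem: it is stated as a result of Gromov, with a pointer to \cite[Proposition 3.1]{CCM:AmenableHyperbolicGps} for a sketch. So there is no in-paper argument to compare against; your proposal is essentially a fleshed-out version of what those references contain, and the overall trichotomy structure you describe (split on boundedness, then on $\tau(g)>0$ versus $\tau(g)=0$) is the standard one.

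Your sketch is sound in outline. Two small remarks. First, in the lineal case your argument that two limit points force either $\tau(g)>0$ or bounded orbits glosses over the possibility that the induced rough isometry of $\mathbb{R}$ is orientation-reversing; passing to $g^2$ handles this, but it is worth saying. Second, as you yourself flag, the existence of a boundary limit point in the unbounded, $\tau(g)=0$ case for a non-proper $X$ is the genuine content, and your final paragraph ultimately defers to the cited references rather than carrying it out. That is exactly what the paper does too, so this is not a gap relative to the paper---but if you want a self-contained proof you will need to supply that step (e.g.\ via the argument in \cite{CCM:AmenableHyperbolicGps} that an unbounded sequence with bounded step size in a hyperbolic space has a subsequence that is Cauchy--Gromov).
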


\subsection{Outer space and its compactification}
Culler Vogtmann's \emph{outer space}, $\CV$, is defined in
\cite{CV:OuterSpace} as the space of simplicial, free, and minimal
isometric actions of $\free$ on simplicial metric trees up to
$\free$-equivariant homothety.  We denote by $\cv$ the
\emph{unprojectivized outer space}, in which the trees are considered
up to isometry, rather than homothety.  Each of these spaces is
equipped with a natural (right) action of $\Out(\free)$.

An $\free$-tree is an $\mathbb{R}$-tree with an isometric action of
$\free$. An $\free$-tree is called \emph{very small} if the action is
minimal, arc stabilizers are either trivial or maximal cyclic, and
tripod stabilizers are trivial. Outer space can be mapped into
$\mathbb{R}^{\free}$ by the map $T\mapsto (\|g\|_T)_{g\in\free}$,
where $\|g\|_T$ denotes the translation length of $g$ in $T$.  This
was shown in \cite{CM:LengthFunction} to be a continuous injection.
The closure of the image of $\CV$ under this embedding into is compact
and was identified in \cite{BF:OuterLimits} and
\cite{CL:BoundaryOuterSpace} with the space of very small
$\free$-trees.  We denote by $\CVclo$ the closure of outer space in
$\mathbb{PR}^{\free}$ and by $\CVbound$ its boundary.  We will denote
the preimage of $\CVclo$ in $\mathbb{R}^{\free}$ by $\cvclo$.

\subsection{Free factor system}
A free factor system of $\free$ is a finite collection of conjugacy
classes of proper free factors of $\free$ of the form
$\ffa = \{[A_1], \ldots, [A_k] \}$, where $k\geq 0$ and $[\cdot]$
denotes the conjugacy class of a subgroup, such that there exists a
free factorization $\free = A_1 \ast \cdots \ast A_k \ast F_N$. We
refer to the free factor $F_N$ as the \emph{cofactor} of $\ffa$
keeping in mind that it is not unique, even up to conjugacy.
   
The main geometric example of a free factor system is as follows:
suppose $G$ is a marked graph and $K$ is a subgraph whose
non-contractible connected components are denoted $C_1, \ldots,
C_k$. Let $[A_i]$ be the conjugacy class of a free factor of $\free$
determined by $\pi_1(C_i)$. Then $\ffa =\{ [A_1], \ldots, [A_k]\}$ is
a free factor system. We say $\ffa$ is \emph{realized by $K$} and we
denote it by $\mathcal{F}(K)$.

\subsection{Marked graphs}\label{sec:marked-graphs}
We recall some basic definitions from \cite{BH:TrainTracks}. Identify
$\free$ with $\pi_1(\mathcal{R}, \ast)$ where $\mathcal{R}$ is a rose
with $\rank$ petals, $\rank$ being the rank of $\free$. A \emph{marked
  graph} $G$ is a graph of rank $\rank$, all of whose vertices have
valence at least three, equipped with a homotopy equivalence
$m\colon \mathcal{R}\to G$ called a \emph{marking}. The marking
determines an identification of $\free$ with $\pi_1(G,m(\ast))$. A
homotopy equivalence $f\colon G \to G$ induces an outer automorphism
of $\pi_1(G)$ and hence an element $\phi$ of $\Out(\free)$. If $f$
sends vertices to vertices and the restriction of $f$ to edges is an
immersion then we say that $f$ is a \emph{topological representative}
of $\phi$.

\subsection{Paths, circuits, and tightening}
Let $\Gamma$ be either a marked graph or an $\free$-tree.  A
\emph{path} in $\Gamma$ is either an isometric immersion of a
(possibly infinite) closed interval $\sigma\colon I\to \Gamma$ or a
constant map $\sigma\colon I\to \Gamma$.  If $\sigma$ is a constant
map, the path will be called \emph{trivial}.  If $I$ is finite, then
any map $\sigma\colon I\to \Gamma$ is homotopic rel endpoints to a
unique path $[\sigma]$.  We say that $[\sigma]$ is obtained by
\emph{tightening} $\sigma$.  If $f\colon \Gamma\to \Gamma$ is
continuous and $\sigma$ is a path in $\Gamma$, we define
$f_\#(\sigma)$ as $[f(\sigma)]$.  If the domain of $\sigma$ is finite
and $\Gamma$ is either a graph or a simplicial tree, then the image
has a natural decomposition into edges $E_1E_2\cdots E_k$ called the
\emph{edge path associated to} $\sigma$.  If $\Gamma$ is a tree, we
may use $[x,x']$ to denote the unique geodesic path connecting $x$ and
$x'$.

A \emph{circuit} is an immersion $\sigma\colon S^1\to \Gamma$.  For
any path or circuit, let $\overline{\sigma}$ be $\sigma$ with its
orientation reversed.  A decomposition of a path or circuit into
subpaths is a \emph{splitting} for $f\colon \Gamma\to \Gamma$ and is
denoted $\sigma=\ldots\sigma_1\cdot\sigma_2\ldots$ if
$f^k_\#(\sigma)=\ldots f^k_\#(\sigma_1)f^k_\#(\sigma_2)\ldots$ for all
$k\geq 1$.

\subsection{Turns, directions and train track structure}
Let $\Gamma$ be an $\free$-tree.  A direction $d$ based at $p\in\Gamma$
is a component of $\Gamma -\{p\}$.  A \emph{turn} is an unordered pair
of directions based at the same point.  
In the case that $\Gamma$ is a simplicial tree, and $p$ is a vertex, we identify
directions at $p$ with edges emanating from $p$.  An \emph{illegal
  turn structure} on $\Gamma$ is an equivalence relation on the set of
directions at each point $p\in\Gamma$.  The classes of this relation
are called \emph{gates}.  A turn $(d,d')$ is \emph{legal} if $d$ and
$d'$ do not belong to the same gate.  If in addition there are at
least two gates at every vertex of $\Gamma$, then the illegal turn
structure is called a \emph{train track structure}.  A path is legal
if it only crosses legal turns.

\subsection{Optimal morphism}
Given two $\free$-trees $\Gamma$ and $\Gamma'$, an $\free$-equivariant
map $f\colon \Gamma \to \Gamma'$ is called a \emph{morphism} if every
segment of $\Gamma$ can be subdivided into finitely many subintervals
onto which $f$ restricts to an isometric embedding.  A morphism
between $\free$-trees induces an illegal turn structure structure on
the domain $\Gamma$ as follows: for every $x \in \Gamma$, the map $f$
determines a map $Df_x \colon D_x \to D_{f(x)}$, on the set of
directions $D_x$ at $x$.  For $d, d' \in D_x$, we then declare
$d\sim d'$ if $D(f^k)(d)=D(f^k)(d')$ for some $k\geq 0$.  A morphism
is called \emph{optimal} if there are at least two gates at each point
of $\Gamma$.  A morphism $f$ that induces a train track structure is
an optimal
morphism. 

The map $f$ is called \emph{alignment preserving}
(or a \emph{collapse map}) if the $f$-image of every segment in
$\Gamma$ is a segment in $\Gamma'$.


\subsection{Train track maps}

An optimal morphism is called a \emph{train track map} if
$f \colon \Gamma \to \Gamma'$ is an embedding on each edge and maps legal
turns to legal turns.  In particular, legal paths map to legal
paths.  Note that usually the term \emph{train track map} is used for
self maps, but in \cite{BF:FreeFactorComplex}, the authors define it
for a map between different $\free$-trees, each equipped with its own
abstract train track structure.

The terminology can also be exteded to graphs by passing to their
universal covers.  For more details on train track maps, the reader is
referred to \cite{BF:FreeFactorComplex,BH:TrainTracks}.

\subsection{Relative train track maps and CTs}
A \emph{filtration} for a topological representative $f\colon G\to G$
of an outer automorphism $\phi$, where $G$ is a marked graph, is an
increasing sequence of $f$-invariant subgraphs $\fltr$.  We let
$H_i=\overline{G_i\setminus G_{i-1}}$ and call $H_i$ the \emph{$i$-th
  stratum}.  A turn with one edge in $H_i$ and the other in $G_{i-1}$
is called \emph{mixed} while a turn with both edges in $H_i$ is called
a \emph{turn in $H_i$}.  If $\sigma\subset G_i$ does not contain any
illegal turns in $H_i$, then we say $\sigma$ is \emph{$i$-legal}.

We denote by $M_i$ the submatrix of the transition matrix for $f$
obtained by deleting all rows and columns except those labeled by
edges in $H_i$.  For the topological representatives that will be of
interest to us, the transition matrices $M_i$ will come in three
flavors: $M_i$ may be a zero matrix, it may be the $1\times 1$
identity matrix, or it may be an irreducible matrix with
Perron-Frobenius eigenvalue $\lambda_i>1$.  We will call $H_i$ a
\emph{zero} (Z), \emph{non-exponentially growing} (NEG), or
\emph{exponentially growing} (EG) stratum, respectively.  Any stratum
which is not a zero stratum is called an \emph{irreducible stratum}.

\begin{definition}[{\cite{BH:TrainTracks}}]
  We say that $f\colon G\to G$ is a \emph{relative train track map}
  representing $\phi\in\Out(F_n)$ if for every exponentially growing
  stratum $H_r$, the following hold:
  \begin{description}
  \item[(RTT-i)]\label{RTT-i} $Df$ maps the set of oriented edges in
    $H_r$ to itself; in particular all mixed turns are legal.
  \item[(RTT-ii)]\label{RTT-ii} If $\sigma\subset G_{r-1}$ is a
    nontrivial path with endpoints in $H_r\cap G_{r-1}$, then so is
    $f_\#(\sigma)$.
  \item[(RTT-iii)]\label{RTT-iii} If $\sigma\subset G_r$ is $r$-legal,
    then $f_\#(\sigma)$ is $r$-legal.
  \end{description}
\end{definition}

Suppose that $u<r$, that $H_u$ is irreducible, $H_r$ is EG and each
component of $G_r$ is non-contractible, and that for each $u<i<r$,
$H_i$ is a zero stratum which is a component of $G_{r-1}$ and each
vertex of $H_i$ has valence at least two in $G_r$.  Then we say that
$H_i$ is \emph{enveloped by $H_r$} and we define
$H_r^z=\bigcup_{k=u+1}^rH_k$.

A path or circuit $\sigma$ in a representative $f\colon G\to G$ is
called a \emph{periodic Nielsen path} if $f_\#^k(\sigma)=\sigma$ for
some $k\geq 1$.  If $k=1$, then $\sigma$ is a \emph{Nielsen path}.  A
Nielsen path is \emph{indivisible}, denoted INP, if it cannot be
written as a concatenation of non-trivial Nielsen paths.  If $w$ is a
closed root-free Nielsen path and $E_i$ is an edge such that
$f(E_i)=E_iw^{d_i}$, then we say \emph{$E_i$ is a linear edge} and we
call $w$ the \emph{axis} of $E$.  If $E_i,E_j$ are distinct linear
edges with the same axis such that $d_i\neq d_j$ and $d_i,d_j>0$, then
we call a path of the form $E_iw^*\overline{E}_j$ an \emph{exceptional
  path}.  We say that $x$ and $y$ are \emph{Nielsen equivalent} if
there is a Nielsen path $\sigma$ in $G$ whose endpoints are $x$ and
$y$.  We say that a periodic point $x\in G$ is \emph{principal} if
neither of the following conditions hold:
\begin{itemize}
\item $x$ is an endpoint of a non-trivial periodic Nielsen path and
  there are exactly two periodic directions at $x$, both of which are
  contained in the same EG stratum.
\item $x$ is contained in a component $C$ of periodic points that is
  topologically a circle and each point in $C$ has exactly two
  periodic directions.
\end{itemize}

A relative train track map $f$ is called \emph{rotationless} if each
principal periodic vertex is fixed and if each periodic direction
based at a principal vertex is fixed.

For an EG stratum, $H_r$, we call a non-trivial path
$\sigma\subset G_{r-1}$ with endpoints in $H_r\cap G_{r-1}$ a
\emph{connecting path for $H_r$}.  Let $E$ be an edge in an
irreducible stratum, $H_r$ and let $\sigma$ be a maximal subpath of
$f_\#^k(E)$ in a zero stratum for some $k\geq 1$.  Then we say that
$\sigma$ is \emph{taken}.  A non-trivial path or circuit $\sigma$ is
called \emph{completely split} if it has a splitting
$\sigma=\tau_1\cdot \tau_2\cdots\tau_k$ where each of the $\tau_i$'s
is a single edge in an irreducible stratum, an indivisible Nielsen
path, an exceptional path, or a connecting path in a zero stratum
which is both maximal and taken.  We say that a relative train track
map is \emph{completely split} if $f(E)$ is completely split for every
edge $E$ in an irreducible stratum \emph{and} if for every taken
connecting path $\sigma$ in a zero stratum, $f_\#(\sigma)$ is
completely split.

The following theorem/definition is the main existence result for CTs:

\begin{theorem}[{\cite[Theorem
    4.28]{FH:RecognitionTheorem}}{\cite[Corollary
    3.5]{FH:Abelian}}] \label{def:ct} There exists $k>0$ depending
  only on $\rank$, so that given any $\phi\in\Out(\free)$ and any
  nested sequence of $\phi^k$-invariant free factor systems, there is
  a \emph{completely split improved relative train track map}
  (\emph{CT} for short) $f\colon G\to G$ representing $\phi^k$ such
  that each free factor system is realized by some filtration element.
  The map $f$ satisfies the following properties:
  \begin{description}
  \item[(Rotationless)] $f\colon G\to G$ is rotationless.
  \item[(Completely Split)] $f\colon G\to G$ is completely split.
  \item[(Filtration)] $\mathcal{F}$ is reduced.  The core of each
    filtration element is a filtration element.
  \item[(Vertices)] The endpoints of all indivisible periodic
    (necessarily fixed) Nielsen paths are (necessarily principal)
    vertices.  The terminal endpoint of each non-fixed NEG edge is
    principal (and hence fixed).
  \item[(Periodic Edges)] Each periodic edge is fixed and each
    endpoint of a fixed edge is principal.  If the unique edge $E_r$
    in a fixed stratum $H_r$ is not a loop then $G_{r-1}$ is a core
    graph and both ends of $E_r$ are contained in $G_{r-1}$.
  \item[(Zero Strata)] If $H_i$ is a zero stratum, then $H_i$ is
    enveloped by an EG stratum $H_r$, each edge in $H_i$ is $r$-taken
    and each vertex in $H_i$ is contained in $H_r$ and has link
    contained in $H_i \cup H_r$.
  \item[(Linear Edges)] For each linear $E_i$ there is a closed
    root-free Nielsen path $w_i$ such that $f(E_i) = E_i w_i^{d_i}$
    for some $d_i \ne 0$.  If $E_i$ and $E_j$ are distinct linear
    edges with the same axes then $w_i = w_j$ and $d_i \ne d_j$.
  \item[(NEG Nielsen Paths)] If the highest edges in an indivisible
    Nielsen path $\sigma$ belong to an NEG stratum then there is a
    linear edge $E_i$ with $w_i$ as in (Linear Edges) and there exists
    $k \ne 0$ such that $\sigma = E_i w_i^k \bar E_i$.
  \end{description}
  Moreover, if $\phi$ is rotationless in the sense of
  \cite{FH:RecognitionTheorem}, then we may take $k=1$.
\end{theorem}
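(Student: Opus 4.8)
The statement is Feighn--Handel's existence theorem for CTs, so any honest proof reproduces the arguments of \cite{FH:RecognitionTheorem} together with the rotationless refinement of \cite{FH:Abelian}; here I only outline the architecture and would, in practice, simply invoke those results. The plan is to start from the relative-train-track technology already available: pick an improved relative train track representative of a power of $\phi$ in the sense of Bestvina--Feighn--Handel, adapted to the given nested sequence of free factor systems so that each of them is realized by a filtration element, and then run a long sequence of elementary moves --- subdivision at periodic points, collapsing invariant forests, sliding and folding of NEG edges, blowing up and collapsing zero strata --- each of which preserves the represented outer automorphism and keeps the prescribed free factor systems realized, while monotonically pushing the map toward the CT normal form.

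First I would secure \textbf{(Rotationless)}: passing to the power $\phi^k$ for a suitable $k$ depending only on $\rank$ makes every principal periodic vertex and every periodic direction at such a vertex fixed, the uniformity of $k$ coming from the bounded order of the relevant finite-order phenomena (finite subgroups acting on the strata, rotation of directions, periods of Nielsen paths), exactly as in \cite[Corollary 3.5]{FH:Abelian}. If $\phi$ is already rotationless in the sense of \cite{FH:RecognitionTheorem} one never passes to a proper power, which is the last sentence with $k=1$. Then I would process strata from the bottom up. For each EG stratum one arranges the relative-train-track conditions (RTT-i)--(RTT-iii) and normalizes the (at most one, after the rotationless power) indivisible Nielsen path in that stratum so that its endpoints are principal vertices, giving \textbf{(Vertices)} and \textbf{(Periodic Edges)} for EG and fixed strata. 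For each NEG stratum one classifies the single edge $E_r$ as fixed, linear, or of higher type; a slide-and-fold argument forces the terminal endpoint to be principal and puts each linear edge into the form $f(E_i) = E_i w_i^{d_i}$ with $w_i$ closed and root-free, arranging moreover that linear edges with the same axis share $w_i$ --- this is \textbf{(Linear Edges)} and \textbf{(NEG Nielsen Paths)}. Zero strata are introduced only as the maximal taken pieces enveloped by an EG stratum, yielding \textbf{(Zero Strata)}, and \textbf{(Filtration)} (reducedness, and the core of each filtration element being a filtration element) is obtained by reorganizing and collapsing within the filtration.

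The main obstacle, and the heart of the Feighn--Handel argument, is \textbf{(Completely Split)}: one must arrange that $f(E)$ admits a complete splitting --- into single edges in irreducible strata, indivisible Nielsen paths, exceptional paths, and maximal taken connecting paths in zero strata --- for every edge $E$ in an irreducible stratum, and simultaneously that $f_\#(\sigma)$ is completely split for every maximal taken connecting path $\sigma$, all without disturbing the normalization already achieved in the lower strata. The mechanism is that complete splittings are preserved under $f_\#$ and, in a suitable sense, ``become visible'' after enough iteration, so one replaces $f$ by a high iterate and then performs finitely many further subdivisions and folds to expose the splitting of each $f(E)$; the delicate bookkeeping is verifying that these final moves interact correctly with EG Nielsen paths, exceptional paths, and the already-normalized NEG and zero strata, and that they terminate. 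I expect essentially all of the difficulty to be concentrated here, and would not attempt to reproduce it, citing \cite[Theorem 4.28]{FH:RecognitionTheorem} and \cite[Corollary 3.5]{FH:Abelian} instead.
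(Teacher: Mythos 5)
Your proposal takes exactly the same approach as the paper: the paper states this result as a citation to \cite[Theorem 4.28]{FH:RecognitionTheorem} and \cite[Corollary 3.5]{FH:Abelian} and gives no proof of its own, and you likewise defer to those references. Your architectural sketch of the Feighn--Handel argument (rotationless power with $k$ depending only on rank, bottom-up normalization of strata, the bulk of the work concentrated in (Completely Split)) is accurate but is extra material beyond what the paper itself supplies.
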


It follows directly from the definitions that, for completely split
paths and circuits, all cancellation under iteration of $f_{\#}$ is
confined to the individual terms of the splitting.  Moreover,
$f_\#(\sigma)$ has a complete splitting which refines that of
$\sigma$.  Finally, just as with improved relative train track maps
introduced in \cite{BFH:Tits}, every circuit or path with endpoints at
vertices eventually is completely split \cite[Lemma
4.25]{FH:RecognitionTheorem}.  The reader is directed to \cite[\S
4]{FH:RecognitionTheorem} for many useful properties of CTs that we
will use frequently in the sequel, often without a specific reference.

\subsection{Bounded backtracking (BBT)} Let $f \colon T \to T'$ be a
continuous map between two $\mathbb{R}$-trees $T$ and $T'$. We say
that $f$ has bounded backtracking if the $f$ image of any path $[p,q]$
is contained in a $C$-neighborhood of $[f(p),f(q)]$. The smallest such
$C$ is called the \emph{bounded backtracking constant} of $f$, denoted
$\BBT(f)$.

\subsection{Folding paths}
Given simplicial $\free$-trees $T$ and $T'$ and an optimal morphism
$f\colon T\to T'$ Guirardel and Levitt \cite[Section
3]{GL:RelativeOuterSpace} construct a \emph{canonical optimal folding
  path} $(T_t)_{t\in\mathbb{R}^+}$ guided by $f$.  The tree $T_t$ is
constructed as follows.  Given $a,b\in T$ with $f(a)=f(b)$, the
\emph{identification time} of $a$ and $b$ is defined as
$\tau(a,b)=\sup_{x\in[a,b]}d_{T'}(f(x),f(a))$.  Define
$L:=\frac{1}{2}\BBT(f)$.  For each $t\in [0,L]$, one defines an
equivalence relation $\sim_t$ by $a\sim_t b$ if $f(a)=f(b)$ and
$\tau(a,b)<t$.  The tree $T_t$ is then a quotient of $T$ by the
equivalence relation $\sim_t$.  The authors prove that for each
$t\in[0,L]$, $T_t$ is an $\mathbb{R}$-tree.  The collection of trees
$(T_t)_{t\in[0,L]}$ comes equipped with $\free$-equivariant morphisms
$f_{s,t}\colon T_t\to T_s$ for all $t<s$ and these maps satisfy the
semi-flow property: for all $r<s<t$, we have
$f_{t,s}\circ f_{s,r}=f_{t,r}$.  Moreover $T_L=T'$ and $f_{L,0}=f$.
The trees $(T_t)_{t \in [0,L]}$ and the maps
$(f_{s,t}\colon T_t \to T_s)_{t<s \in [0,L]}$ are called the
\emph{connection data} for the folding path.

\subsection{The $\mathcal{Z}$-splitting complex}
\label{subsec:CyclicS}
Let $\mathcal{Z}$ be the collection of subgroups of $\free$ that are
either trivial or cyclic.  We denote by $\mathcal{Z}^{\text{max}}$ the
collection of elements of $\mathcal{Z}$ which are either trivial or
closed under taking roots.  We will use the notation
$\mathcal{Z}^{\m}$ to mean either $\mathcal{Z}$ or
$\mathcal{Z}^{\text{max}}$.  A \emph{$\mathcal{Z}^{\m}$-splitting} is
a minimal, simplicial $\free$-tree whose edge stabilizers belong to
the set $\mathcal{Z}^{\m}$; it is a \emph{one-edge splitting} if there
is one $\free$ orbit of edges.  A \emph{cyclic splitting}
(resp. maximally-cyclic splitting) is a one-edge
$\mathcal{Z}$-splitting (resp. $\mathcal{Z}^{\text{max}}$-splitting)
whose edge stabilizer is infinite cyclic.  Two
$\mathcal{Z}^{\m}$-splittings are \emph{equivalent} if the
corresponding Bass-Serre trees are $\free$-equivariantly homeomorphic.
We will often blur the distinction between a splitting and its
Bass-Serre tree.

If $S$ is a one-edge free splitting (resp.\
$\mathcal{Z}^{\m}$-splitting) and $v$ is a vertex in the Bass-Serre
tree, then $\Stab(v)$ will be called a \emph{vertex group} of $S$.
Vertex groups of free splittings are free factors.

Given two $\mathcal{Z}^{\m}$-splittings $\overline{T}$ and $T$, we say
that $\overline{T}$ is a \emph{refinement} of $T$ if there is a
collapse map from $\overline{T}$ to $T$.  Two
$\mathcal{Z}^{\m}$-splittings $T$ and $T'$ are \emph{compatible} if
they have a common refinement, i.e., if there exists a tree that
collapses onto both $T$ and $T'$.  A tree $T$ is
\emph{$\mathcal{Z}^{\m}$-incompatible} if the set of
$\mathcal{Z}^{\m}$-splittings compatible with $T$ is empty.  The
(maximally-) cyclic splitting complex $\CyclicS^{\m}$ is the
simplicial complex whose vertices are equivalence classes of one-edge
$\mathcal{Z}^{\m}$-splittings and whose $k$-simplicies are collections
of $k+1$ pairwise compatible one-edge $\mathcal{Z}^{\m}$-splittings.
In \cite{M:CyclicS}, Mann showed that $\CyclicS$ is
$\delta$-hyperbolic.  More recently, Horbez used the same argument
\cite{H:CyclicSBoundary} to prove that $\CyclicS^{\text{max}}$ is
$\delta$-hyperbolic.

The results of Shenitzer, Stallings, Swarup
\cite{Shenitzer,S:Foldings,S:DecompositionFreeGroups} imply that every
one-edge cyclic splitting of $\free$ is obtained from a one-edge free
splitting of $\free$ by `edge folding' process described as follows.
Let $T$ be a free splitting of $\free$, let $v$ be a vertex of $T$ and
let $G_v$ be its stabilizer.  Consider $w \in G_v$ and $\la w \ra$,
the cyclic group generated by $w$. Construct a new $\free$-tree $T'$
by first choosing an edge $e$ incident at $v$, then, for every
$\gamma \in \free$, identifying $\gamma e$ with its orbit under
$\la \gamma w \gamma^{-1} \ra \subseteq G_{\gamma v}$.  The tree $T'$
has an edge with stabilizer equal to $\la w \ra$.  We say $T'$ is
obtained from $T$ by an equivariant \emph{edge fold}, or to be more
specific, we sometimes say that $T'$ is obtained from $T$ by
performing the \emph{edge fold corresponding to $\la w \ra$}.

\subsection{$\mathcal{Z}$-averse trees and boundary of $\CyclicS$}
\label{sec:Z-averse} 
A tree $T$ in $\cvclo$ is called $\mathcal{Z}^{\m}$-averse
\cite[Definition 4.2]{H:CyclicSBoundary} if there is no finite chain
of compatibility between $T$ and a $\mathcal{Z}^{\m}$-splitting: that
is, if there is no finite set of trees ($T=T_0,T_1,\ldots,T_k=T'$) in
$\cvclo$ such that $T'$ is a $\mathcal{Z}^{\m}$-splitting and for each
$i\in \{0,\ldots,k-1\}$, the trees $T_i$ and $T_{i+1}$ are compatible.
Two $\mathcal{Z}^{\m}$-averse trees, $T,T'$, are called
\emph{equivalent} if there is a finite chain of compatible trees in
$\cvclo$ relating $T$ to $T'$ as above.  The reader will note that the
notions of $\mathcal{Z}^{\m}$-compatibility and
$\mathcal{Z}^{\m}$-aversity are independent of the homothety class of
$T$; in particular, it makes sense to say that a tree in $\CVclo$ is
$\mathcal{Z}$-averse, or that two trees in $\CVclo$ are equivalent.
We denote by $\mathcal{X}^{\m}$ (resp.\ $\mathbb{P}\mathcal{X}^{\m}$)
the subspace of $\cvclo$ (resp.\ $\CVclo$) consisting of
$\mathcal{Z}^{\m}$-averse trees.

There is a natural map from a subset of $\CVbound$ to the Gromov
boundary of $\CyclicS^{\m}$ relating the geometries at infinity of
these two spaces, which we now describe.  There is a map
$\psi^{\m}\colon\CV\to\CyclicS^{\m}$, which extends to the set of
simplicial trees in $\cvclo$ with trivial edge stabilizers, defined by
choosing a one-edge collapse of every simplicial tree in $\CV$.  This
map is not quite $\Out(\free)$-equivariant because we must make
choices, however differing choices change distances by at most 2.  The
following theorem due to Horbez describes the boundary of the free
splitting complex.

\begin{theorem}[{\cite[Theorem 0.1]{H:CyclicSBoundary}}]
  \label{thm:horbez}
  There is a unique $\Out(\free)$-equivariant homeomorphism
  \begin{equation*}
    \partial \psi^{\m} \colon \mathcal{X}^{\m} / \sim\quad \longrightarrow\quad \partial_{\infty}\CyclicS^{\m}
  \end{equation*}
  so that for all $T \in \mathcal{X}^{\m}$ and all sequences
  $(T_n) \in \cv^{\mathbb{N}}$ converging to $T$, the sequence
  $(\psi^{\m} (T_n))_{n \in \mathbb{N}}$ converges to $\psi(T)$.
\end{theorem}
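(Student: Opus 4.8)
This is Horbez's theorem \cite{H:CyclicSBoundary}, so I will only outline the strategy I would follow, modeled on the analogous identifications of $\partial_\infty\FreeF$ and $\partial_\infty\FreeS$. First I would record the soft properties of $\psi^{\m}$: it is coarsely $\Out(\free)$-equivariant, coarsely Lipschitz, coarsely surjective, and it sends compatible simplicial trees with trivial edge stabilizers to points a bounded distance apart. The analytic heart is a \emph{convergence lemma}: if $T\in\mathcal{X}^{\m}$ and $(T_n)\subset\cv$ converges to $T$, then $(\psi^{\m}(T_n))$ converges in $\partial_\infty\CyclicS^{\m}$. Because $\CyclicS^{\m}$ is Gromov hyperbolic, this reduces to showing that the Gromov products $\langle\psi^{\m}(T_m)\mid\psi^{\m}(T_n)\rangle_{x_0}\to\infty$. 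This is exactly where $\mathcal{Z}^{\m}$-aversity of $T$ enters: I would interpolate between $T_m$ and $T_n$ by folding paths (the ones constructed later in the paper) whose endpoints limit onto $T$, and argue that bounded Gromov products would force the $\psi^{\m}$-images of these folding paths to come uniformly close to a single one-edge $\mathcal{Z}^{\m}$-splitting; a limiting argument would then produce a finite compatibility chain from $T$ to a $\mathcal{Z}^{\m}$-splitting, contradicting aversity.

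Granting the convergence lemma I would set $\partial\psi^{\m}(T):=\lim_n\psi^{\m}(T_n)$. Interleaving two sequences that both limit onto $T$ shows this is independent of the approximating sequence, and the bounded-distance property along compatibility chains shows $\partial\psi^{\m}$ factors through $\mathcal{X}^{\m}/\sim$ (it suffices to treat a single compatibility step and to approximate a common refinement). For surjectivity, given $\xi\in\partial_\infty\CyclicS^{\m}$ I would pick vertices $v_n\to\xi$ and, using the Shenitzer--Stallings--Swarup edge-folding description recalled above, unfold each one-edge $\mathcal{Z}^{\m}$-splitting $v_n$ to a free splitting, realizing $v_n$ up to bounded error as $\psi^{\m}(S_n)$ with $S_n\in\cv$. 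Compactness of $\cvclo$ gives a subsequential limit $S_n\to T$; if $T$ were not $\mathcal{Z}^{\m}$-averse, the bounded-distance property would force $(\psi^{\m}(S_n))$ to stay bounded, contradicting $v_n\to\xi$, so $T\in\mathcal{X}^{\m}$ and the convergence lemma gives $\partial\psi^{\m}(T)=\xi$.

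The step I expect to be the main obstacle is injectivity: if $T,T'\in\mathcal{X}^{\m}$ and $\partial\psi^{\m}(T)=\partial\psi^{\m}(T')$, I must show $T\sim T'$. The plan is to take approximants $T_n\to T$ and $T'_n\to T'$ in $\cv$ whose $\psi^{\m}$-images both converge to the common point $\xi$, so the two sequences fellow-travel in $\CyclicS^{\m}$, and then to convert this coarse proximity into an honest finite chain of compatible very small trees between $T$ and $T'$. This is the step that pins $\sim$ down as exactly the right equivalence relation, and it seems to require the fine structure theory of $\mathcal{Z}^{\m}$-splittings and of limits of folding paths rather than soft arguments. Once injectivity holds, $\partial\psi^{\m}$ is a bijection; $\Out(\free)$-equivariance follows from coarse equivariance of $\psi^{\m}$ and equivariance of the limiting construction; and continuity of $\partial\psi^{\m}$ and of its inverse I would verify along sequences, the inverse via a diagonal argument that feeds back into the convergence lemma.
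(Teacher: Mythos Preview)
This theorem is not proved in the present paper: it is quoted verbatim as \cite[Theorem 0.1]{H:CyclicSBoundary} and used as a black box, with no argument given here. You correctly flag this at the outset. Consequently there is no proof in this paper to compare your outline against; whether your sketch matches Horbez's actual argument is a question about \cite{H:CyclicSBoundary}, not about the paper under review.
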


Given a tree $T \in \cvclo$, a $\mathcal{Z}^{\m}$-splitting $S$ is called a \emph{reducing splitting} for $T$, if $S$ is compatible with some $T' \in \cvclo$, which is itself compatible with $T$.  
\subsection{Lines and Laminations}\label{sec:lam}
We briefly recall some definitions, but the reader is directed to
\cite{BFH:Tits} for details. The \emph{space of abstract lines},
$\widetilde{\mathcal{B}}=(\partial\free\times\partial\free-\Delta)
/\mathbb{Z}_2$ is the set of unordered distinct pairs of points in the
boundary of $\free$ and is equipped with the natural
(subspace/product/quotient) topology.  The quotient of
$\widetilde{\mathcal{B}}$ by the natural $\free$ action is \emph{the
  space of lines in $\mathcal{R}$} and is called $\mathcal{B}$.  It is
endowed with the quotient topology, which satisfies none of the
separation axioms.  Points in $\mathcal{B}$ and
$\widetilde{\mathcal{B}}$ will be called lines.

A closed subset $\Lambda$ of $\mathcal{B}$ is an \emph{attracting
  lamination for $\phi$} if it is the closure of a single line
$\beta$ that is \emph{bireccurrent} (every finite subpath $\sigma$ of
$\beta$ occurs infinitely many times as an unoriented subpath of each
end of $\beta$), has an \emph{attracting neigborhood} (there is some
open $U\ni\beta$ so that $\phi^k(\gamma)\to\beta$ for all
$\gamma\in U$), and is not carried by a rank one $\phi$-periodic
free factor.  The lines in $\Lambda$ satisfying the
above properties are called the \emph{generic leaves} of $\Lambda$.

A subgroup $A$ of $\free$ determines a subset of
the boundary of $\free$ called $\partial A\subset\partial\free$.  We
say that $A$ \emph{carries} a line $\beta$ if there is some lift
$\widetilde{\beta}$ whose endpoints are in $\partial A$.  We then say
that the $A$ \emph{carries} the lamination $\Lambda$ if $A$ carries
some (any) generic leaf of $\Lambda$.  A lamination $\Lambda$ is said
to be \emph{filling} (resp.\ \emph{$\mathcal{Z}^{\m}$-filling}) if
$\Lambda$ is not carried by any vertex group of any free (resp.\ $\mathcal{Z}^{\m}$-)
splitting.

Let $\pi_A\colon G_A\to \mathcal{R}$ be the immersion from the core of the
cover of $\mathcal{R}$ corresponding to the subgroup $A$ and let
$\beta$ be a line.  Then clearly $\beta$ is carried by $A$ if and only
if there exist immersions $\rho_A\colon \mathbb{R}\to G_A$ and
$\rho\colon \mathbb{R}\to \mathcal{R}$ such that $\rho=\pi_A\rho_A$.
If we further assume that $A$ is finitely generated, it's easy to see
that $\beta$ is carried by $A$ if and only if every finite subsegment
of $\beta$ can be immersed into $G_A$.

\section{Folding in the boundary of outer space}
\label{sec:folding-seq}
Throughout this section, $\phi$ will be an outer automorphism with a
$\mathcal{Z}^{\m}$-filling lamination $\lamination$.  Our first goal
is to extract from $\phi$ a folding path converging to a tree in
$\CVbound$ which ``witnesses'' the lamination $\lamination$. The
automorphism $\phi$ is fully irreducible relative to some maximal
$\phi$-invariant free factor system $\ffa$. Since $\phi$ has a filling
lamination, $\ffa$ is not an exceptional free factor system, that is,
is it not of the form $\{A\}$ or $\{A_1, A_2\}$ where
$\free = A \ast \mathbb{Z}$ or $\free = A_1 \ast A_2$. Let
$f\colon T \to T$ be the universal cover of a relative train track
representative of $\phi$ realizing the invariant free factor system
$\ffa$.  Let $G=T/\free$ be the quotient graph, which comes with a
filtration $\Rfltr$ such that $\mathcal{F}(G_{r-1}) = \ffa$ and $H_r$
is an EG stratum with Perron-Frobenius eigenvalue $\PFevalue$.  Let
$T_r$ (resp.\ $T_{r-1}$) denote the full preimage of $H_r$ (resp.\
$G_{r-1}$) under the quotient map $T\to G$.  We endow $G$ (and hence
$T$) with a metric by declaring all edges to have length $1$.  We will
henceforth consider $T$ as a point in unprojectivized outer space
$\cv$, whereby $f$ may be thought of as an $\free$-equivariant map
$T\to T\cdot \phi$.

Let $T_0'$ be the tree obtained from $T$ by equivariantly collapsing
the $\ffa$-minimal subtree.  Our present aim is to construct a folding
path ending at
$\StableTree := \lim_{n \to \infty} T_0' \phi^n/ \PFevalue^n$.  To
accomplish this, we will construct simplicial trees $T_0,T_1$ and
define an optimal morphism $f_0\colon T_0\to T_1$.  From this we will
obtain a periodic canonical optimal folding path $(f_t)_{t\in [0,L]}$
which will end at $\StableTree$.  It is worth noting that the natural
map $f_0'\colon T_0'\to T_0'\phi$ induced by $f$ is neither optimal
nor a morphism as there may be non-degenerate intervals which are
mapped to points.

We would like to remark that existence of an optimal morphism which is a train track map
representing a relative fully irreducible outer automorphism is a
special case of the results of \cite{FM:TTforRelativeOuterSpace} and
\cite{Meinert}, for free products and deformation spaces,
respectively.  The authors of \cite{FM:TTforRelativeOuterSpace}
develop metric theory for relative outer space for free products which
is then used to show the existence of optimal maps. This requires
considerable amount of work due to lack of applicability of
Arzela-Ascoli theorem in this setting. In what follows, we provide a
shorter proof of existence of a train track map representing $\phi$ in
the context of free groups.
 
\subsection*{Constructing $T_0$}
The following is based on the construction in the proof of \cite[Lemma
5.10]{BH:TrainTracks}.  Define a measure $\mu$ on $T$ with support
contained in the set $\{x \in T_r : f^k(x) \in T_r$ for all
$k\geq 0 \}$ as follows: choose a Perron-Frobenius eigenvector
$\vec{v}$ corresponding to the PF eigenvalue $\PFevalue$. For an edge
$e$ in $T_r$, let $\mu(e) = v_e$ where $v_e$ is the component of
$\vec{v}$ corresponding to $e$.  Define $\mu(e) = 0$ for all edges
$e \in T_{r-1}$. Let $V$ be the set of vertices of $T$ and let
$V_m :=\{x \in T : f^m(x) \in V\}$.  Subdividing $T$ at $V_m$ divides
each edge into segments that map to edge paths under $f^m$.  If $a$ is
such a segment then define $\mu(a) = \mu(f^m(a))/\PFevalue^m$.  The
definition of $\mu$ together with the fact that relative train track
maps take $r$-legal paths to $r$-legal paths implies:

\begin{lemma}\label{lem:mu-prop}
  If $[x,y]$ is an $r$-legal path in $T$, then
  $\mu(f_{\#}([x,y]))=\PFevalue \mu([x,y])$.  If $[x,y]$ contains an
  initial or terminal segment of some edge in $T_r$, then
  $\mu([x,y])>0$.
\end{lemma}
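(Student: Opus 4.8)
The plan is to prove Lemma~\ref{lem:mu-prop} directly from the definition of the measure $\mu$ and the defining properties (RTT-i)--(RTT-iii) of a relative train track map. Throughout, recall that $\mu$ was built so that $\mu(f^m(a)) = \PFevalue^m \mu(a)$ for any segment $a$ obtained by subdividing $T$ at $V_m$; in particular $\mu$ is an $f$-invariant transverse measure scaled by $\PFevalue$ in the appropriate sense, and edges of $T_{r-1}$ have zero $\mu$-mass.

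\medskip

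\emph{First claim: $\mu(f_\#([x,y])) = \PFevalue\,\mu([x,y])$ for an $r$-legal path $[x,y]$.} First I would reduce to the case where $x,y \in V_1$, i.e.\ the endpoints map to vertices under $f$: subdividing at $V_1$ only refines the segments, and both $\mu$ and $f_\#$ behave additively under concatenation, so it suffices to check the identity on each subsegment; also, since an $r$-legal path crosses no illegal turn in $H_r$, its restriction to any subinterval is still $r$-legal. With $x,y\in V_1$, the map $f$ sends $[x,y]$ to an edge path $f([x,y])$, and $f_\#([x,y])$ is this edge path after tightening. The point is that tightening removes only cancellation, and by (RTT-i) and (RTT-iii) no cancellation involving $H_r$-edges occurs: mixed turns are legal, so an $H_r$-edge is never cancelled against a $G_{r-1}$-edge, and since $[x,y]$ is $r$-legal, (RTT-iii) guarantees $f_\#([x,y])$ is $r$-legal, so no $H_r$-edge is cancelled against another $H_r$-edge either. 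Hence the multiset of $H_r$-edges in $f_\#([x,y])$ agrees with that in the untightened image $f([x,y])$. Since $\mu$ only sees $H_r$-edges, $\mu(f_\#([x,y])) = \mu(f([x,y])) = \PFevalue\,\mu([x,y])$, the last equality being exactly the defining scaling property of $\mu$ (applied with $m=1$ to each edge-segment of $[x,y]$, then summed).

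\medskip

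\emph{Second claim: if $[x,y]$ contains an initial or terminal segment of an edge $e$ of $T_r$, then $\mu([x,y]) > 0$.} Here I would simply observe that $\mu$ restricted to $e$ is (a positive multiple of) Lebesgue measure on the edge with total mass $v_e > 0$ --- positivity of $v_e$ is the Perron--Frobenius property of the eigenvector $\vec v$ for the irreducible matrix $M_r$. So any nondegenerate subsegment of $e$, in particular an initial or terminal subsegment, has strictly positive $\mu$-mass, and since $\mu$ is a nonnegative measure, $\mu([x,y]) \geq \mu([x,y]\cap e) > 0$.

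\medskip

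The only genuinely delicate point is the no-cancellation argument in the first claim: one must be careful that when passing from $f([x,y])$ to its tightening $f_\#([x,y])$, the cancellation that is permitted (namely, cancellation within the $G_{r-1}$-portions, which carry zero $\mu$-mass anyway) does not inadvertently expose and cancel $H_r$-edges. This is precisely what (RTT-i) rules out: since $Df$ preserves the set of oriented $H_r$-edges and all mixed turns are legal, the $H_r$-edges in $f(e)$ for $e \in H_r$ sit at well-defined positions separated by (possibly trivial, possibly not) $G_{r-1}$-paths, and the endpoints-in-$H_r\cap G_{r-1}$ hypothesis of (RTT-ii) ensures the connecting $G_{r-1}$-paths between consecutive $H_r$-edges tighten to $G_{r-1}$-paths rather than collapsing an adjacent $H_r$-edge. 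I expect this bookkeeping to be the main obstacle, though it is standard in the relative train track literature; everything else is a direct unwinding of the definition of $\mu$.
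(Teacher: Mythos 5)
Your argument for the first claim is essentially sound and matches the spirit of what the paper treats as immediate: the scaling identity $\mu(f(a))=\PFevalue\,\mu(a)$ on $V_1$-subdivision segments, combined with the observation that no $H_r$-edge can be cancelled when passing from $f([x,y])$ to $f_\#([x,y])$ (using (RTT-i) to rule out mixed cancellation, legality of the $H_r$-turns in $[x,y]$ to rule out direct $H_r$-against-$H_r$ cancellation, and (RTT-ii) to rule out ``cancellation propagating through a collapsing $G_{r-1}$-bridge''). One caveat: the preliminary claim that ``$f_\#$ behaves additively under concatenation'' is not literally true --- $f_\#(\alpha\beta)$ need not equal $f_\#(\alpha)f_\#(\beta)$ --- and the reduction-to-$V_1$-subsegments framing is somewhat confused. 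What you actually use, correctly, in the following sentences is additivity of $\mu$ over the untightened edge path $f([x,y])$ plus the no-$H_r$-cancellation argument, so the confusion is harmless, but the reduction paragraph should be removed or reworded.

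The argument for the second claim, however, has a genuine gap. You assert that ``$\mu$ restricted to $e$ is (a positive multiple of) Lebesgue measure on the edge,'' and derive positivity of $\mu$ on every nondegenerate subsegment. This is false in general. For $e\in H_r$, the path $f(e)$ may cross edges of $G_{r-1}$; the corresponding $V_1$-subdivision segment $a$ of $e$ then satisfies $\mu(a)=\mu(f(a))/\PFevalue = 0$, since edges of $T_{r-1}$ carry zero $\mu$-mass. So $\mu\vert_e$ typically has $\mu$-null interior gaps, and in fact the paper explicitly says the support of $\mu$ is contained in $\{x\in T_r: f^k(x)\in T_r \text{ for all }k\ge 0\}$, which is a proper subset of $T_r$ whenever some $f(e)$ for $e\in H_r$ visits $G_{r-1}$. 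Your argument would (incorrectly) prove that every nondegenerate subsegment of $e$ has positive mass; but the lemma only claims this for \emph{initial or terminal} segments, and that restriction is essential. The correct mechanism is (RTT-i): since $Df$ maps the set of oriented $H_r$-edges into itself, the first and last edges of $f^m(e)$ lie in $H_r$ for every $m\ge 1$, so the first and last segments of the $V_m$-subdivision of $e$ each map under $f^m$ to a single $H_r$-edge and therefore have $\mu$-mass at least $v_{\min}/\PFevalue^m>0$, where $v_{\min}$ is the smallest coordinate of the Perron--Frobenius eigenvector. Any initial or terminal segment of $e$ contains such a subdivision segment for $m$ large, giving positivity.
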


The measure $\mu$ defines a pseudometric $d_\mu$ on $T$.  Collapsing
the sets of $\mu$-measure zero to make $d_\mu$ into a metric, we
obtain a tree $T_0$.  Let $p\colon T\to T_0$ be the collapse map.

\begin{lemma}\label{lem:simplicial}
  $T_0$ is simplicial.
\end{lemma}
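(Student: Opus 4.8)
The plan is to show that the collapse map $p\colon T\to T_0$ only collapses edges of $T_{r-1}$ (those assigned $\mu$-measure zero) together with certain "pretrivial" subsegments of $T_r$-edges, and that after these collapses only finitely many $\free$-orbits of edges remain, each of positive length bounded below. Concretely, I would first observe that since $\mu(e)=0$ for every edge $e$ of $T_{r-1}$, the whole subforest $T_{r-1}$ is collapsed; the content is that the part of $T$ lying in $T_r$ collapses to a simplicial tree. The key is Lemma~\ref{lem:mu-prop}: if $[x,y]$ contains an initial or terminal segment of an edge in $T_r$ then $\mu([x,y])>0$, so no edge of $T_r$ is entirely collapsed, and the set of points of $\mu$-measure zero on a given $T_r$-edge is a (possibly empty) union of two subsegments meeting the endpoints of subedges in $V_m$-subdivisions.

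Next I would pin down the combinatorial structure. Recall $\mu$ on a subdivided segment $a$ (subdividing at $V_m$) is $\mu(f^m(a))/\PFevalue^m$, and $f^m(a)$ is an edge path in $T$ on which $\mu$ takes the values $v_e$ for $e\in T_r$ and $0$ for $e\in T_{r-1}$. So on each original edge $E$ of $T_r$, the $\mu$-measure is a step function: as we traverse $E$, the $\mu$-density is constant (equal to some eigenvector entry $v_e$) on each subsegment whose $f^m$-image is a single edge of $T_r$, and zero on subsegments mapping into $T_{r-1}$. The finitely many possible densities are the entries of $\vec v$ scaled by powers of $\PFevalue$; but because $\mu(a)=\mu(f^m(a))/\PFevalue^m$ is computed consistently for all $m$, the actual measure is well-defined and, crucially, $E$ decomposes into \emph{finitely many} maximal subsegments of constant density — this finiteness is what I expect to be the main obstacle, and I would derive it from the fact that $f$ is a morphism, so $E$ subdivides into finitely many pieces each mapped isometrically (after rescaling) by $f$, hence iterating, the $V_m$-subdivision of $E$ for the relevant $m$ giving the full complete-splitting behavior is finite, and the density is eventually constant along each $f$-legal stretch. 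After collapsing the zero-density subsegments, each $T_r$-edge becomes a finite concatenation of positive-length segments, and the collapsed images of the break points become vertices of $T_0$.

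Then I would argue that $T_0$ is simplicial with finitely many $\free$-orbits of edges and vertices: the vertex set of $T_0$ is the image under $p$ of $V_m$ for a suitable fixed $m$ together with the original vertices, which is $\free$-finite since $T$ is $\free$-cocompact; edge lengths in $T_0$ take finitely many values (entries of $\vec v$ times powers of $\PFevalue^{-1}$), bounded away from $0$; and the $\free$-action on $T_0$ is simplicial because $p$ is $\free$-equivariant and the collapsed forest is $\free$-invariant. I would then invoke that a minimal $\mathbb{R}$-tree with a cocompact simplicial $\free$-action and finitely many edge lengths bounded below is simplicial in the metric sense, completing the proof. The one subtlety to handle carefully is ensuring the break points accumulate nowhere — i.e.\ the density function on each $T_r$-edge genuinely has finitely many jumps — which reduces, via the morphism property and primitivity of the transition matrix on $H_r$, to the statement that an edge of $T_r$ crosses, under $f^m$, only boundedly many maximal $T_{r-1}$-connecting subpaths, a standard consequence of $f$ being a topological representative with finitely many edges.
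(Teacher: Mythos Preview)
Your plan has a genuine gap at exactly the point you flag as ``the main obstacle'': the claim that each $T_r$-edge $E$ decomposes into \emph{finitely many} maximal subsegments of constant $\mu$-density is false in general. For each $m$ the $V_m$-subdivision of $E$ is finite, but as $m$ grows the number of subsegments of $E$ that $f^m$ carries into $T_{r-1}$ typically grows like $\PFevalue^m$, and these zero-measure subintervals nest at all scales. The support of $\mu$ on $E$ is generically a Cantor set, and there is no finite $m$ after which the density stabilises. So your proposed subdivision into finitely many constant-density pieces does not exist, and the argument that the break points are $\free$-finite collapses.

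Your overall strategy can be rescued, but not via a finite subdivision. The correct observation is that collapsing \emph{all} (possibly infinitely many) zero-measure open subintervals of $E$ still produces a single segment of length $\mu(E)=v_E$, with no interior branch points: by Lemma~\ref{lem:mu-prop} any initial or terminal segment of $E$ has positive measure, so the preimage $p^{-1}(p(x))$ of an interior point stays inside $E$ and hence $p(x)$ has exactly two directions. Thus the edges of $T_0$ are in $\free$-equivariant bijection with the edges of $T_r$, with lengths the entries of $\vec v$, and the vertices are the $p$-images of vertices of $T$ (equivalently, components of $T_{r-1}$). This gives the simplicial structure directly, without any density analysis.

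The paper's proof is considerably shorter and sidesteps the structure of $\mu$ entirely: it simply shows that every $\free$-orbit in $T_0$ is discrete. For $x\in T_0$ pick $\tilde x\in p^{-1}(x)$; then for $g\in\free$ the segment $[\tilde x,g\tilde x]$ either lies in $T_{r-1}$ (so $\mu=0$ and $g\in\Stab(x)$) or contains a full edge of $T_r$ (so $\mu\geq\min_e v_e>0$). Discreteness of all orbits forces the minimal $\free$-tree $T_0$ to be simplicial. This avoids any discussion of how $\mu$ sits inside individual edges.
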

\begin{proof}
  We will show that the $\free$-orbit of any point in $T_0$ must be
  discrete.  Let $x\in T_0$ and choose a point
  $\tilde{x}\in p^{-1}(x)$.  The $\free$-orbit of $\tilde{x}$ in $T$
  is discrete, and to understand the orbit of $x$, we need only
  understand $\mu([\tilde{x},g\tilde{x}])$ for $g\in\free$.  If
  $[\tilde{x},g\tilde{x}]$ contains no edges in $T_r$, then
  $\mu([\tilde{x},g\tilde{x}])=0$, in which case $g\in\Stab(x)$.
  Otherwise, the segment contains an edge in $T_r$, and hence has
  positive $\mu$-measure.  Since there are only finitely many
  $\free$-orbits of edges in $T_r$, there is a lower bound on the
  $\mu$-measure of $[\tilde{x},g\tilde{x}]$.  Hence, there is a lower
  bound on $d_{T_0}(x,gx)$.  This concludes the proof.
\end{proof}

The trees $T_0$ and $T_0'$ are $\free$-equivariantly homeomorphic.
The problem with $T_0'$ is that the ``obvious'' map
$f_0'\colon T_0'\to T_0'\phi$ sends nondegenerate segments to points
and, because of that, is not useful for making a folding path.  The
map $f_0$ defined in the sequel is an improvement because it can be
used to construct a folding path.

\subsection*{Defining $f_0 \colon T_0 \to T_1$}
Let $T_1$ be the tree $\PFevalue^{-1}T_0\cdot\phi$: the leading
coefficient indicates that the metric has been scaled by
$\PFevalue^{-1}$.  The relative train track map
$f\colon T\to T\cdot\phi$ naturally induces a map
$f_0\colon T_0\to T_1$.  For each $x \in T_0$, its pre-image
$p^{-1}(x)$ is a connected subtree of $T$ with $\mu$-measure zero.
The definition of $\mu$ guarantees that the $f$-image of this set is
also connected and has $\mu$-measure zero.  Therefore
$p\circ f\circ p^{-1}(x)$ is a single point in $T_0\cdot\phi$, which
is identified with $T_1$ and we define $f_0:=p\circ f\circ p^{-1}$.

\begin{lemma}
  $f_0$ is an optimal morphism.
\end{lemma}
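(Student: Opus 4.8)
The plan is to verify the two conditions in the definition of optimal morphism separately: first that $f_0$ is a morphism (i.e. every segment of $T_0$ subdivides into finitely many pieces on which $f_0$ is an isometric embedding), and second that $f_0$ is optimal (there are at least two gates at every point of $T_0$). For the morphism property, I would start with a segment $[x,y]$ in $T_0$, lift it to a path $[\tilde{x},\tilde{y}]$ in $T$ (choosing lifts of the endpoints in the fibers $p^{-1}(x)$, $p^{-1}(y)$), and note that $p$ restricted to this path is surjective onto $[x,y]$ with point-preimages being $\mu$-measure-zero subtrees. Since $f$ is a relative train track map, $[\tilde{x},\tilde{y}]$ subdivides into finitely many maximal $r$-legal subpaths and maximal subpaths in $T_{r-1}$. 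On an $r$-legal subpath, Lemma~\ref{lem:mu-prop} gives $\mu(f_\#(\cdot)) = \PFevalue\,\mu(\cdot)$, so $f$ is "$\mu$-length multiplying by $\PFevalue$" there; after a further subdivision into pieces small enough that $f$ maps them to edge-paths with no cancellation, $f_0$ is an isometric embedding (up to the scaling built into $T_1$) on the image of each such piece. The subpaths lying in $T_{r-1}$ have $\mu$-measure zero, hence collapse to points in $T_0$, so they contribute nothing. Finitely many pieces suffice because $[\tilde{x},\tilde{y}]$ is a finite edge-path and each edge of $T$ subdivides into finitely many pieces mapping to edge-paths under $f$.

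For optimality, I need at least two gates at every point of $T_0$. The key observation is that the illegal turn structure on $T_0$ induced by $f_0$ is essentially the restriction of the relative train track structure on $T$: a turn at a point of $T_0$ is legal for $f_0$ precisely when some (hence any) lift is an $r$-legal turn in $T$, because directions that survive the collapse $p$ are exactly those coming from $T_r$-edges. At a vertex $v$ of $T_0$ coming from a vertex of $T$ lying in $H_r$, the relative train track property (RTT-i), together with the fact that $H_r$ is EG (so its transition matrix is irreducible and the stratum has at least two gates at every vertex — a standard consequence of the train track axioms for EG strata), gives at least two gates. Points in the interior of edges of $T_0$ have exactly two directions, which form a legal turn since $f_0$ is locally injective there (it is $\mu$-length-multiplying on $r$-legal segments, and interior points of $T_0$-edges lie on $r$-legal segments), hence two gates.

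The main obstacle I anticipate is the bookkeeping at vertices of $T_0$: I must check that every vertex of $T_0$ genuinely has at least two gates, and this requires understanding which vertices of $T$ survive to $T_0$ and what their links look like after collapsing the $\mu$-null subtrees. A vertex of $T_0$ may be the image of a whole $\mu$-null subtree of $T$ (a component of $T_{r-1}$ together with some attaching structure), so its directions in $T_0$ are the $T_r$-directions pointing out of that subtree. I would argue that such a vertex still has at least two gates by using that $G_{r-1}$ does not carry the lamination $\lamination$ (equivalently, $H_r$ is EG and the filtration is as set up in this section, so $H_r$ cannot attach to $G_{r-1}$ along a single gate — otherwise the lamination would be supported lower), invoking (RTT-i) and the irreducibility of $M_r$. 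This is the step where the specific hypotheses of Section~\ref{sec:folding-seq} — that $\phi$ is fully irreducible relative to $\ffa$ and that $\ffa$ is not exceptional — do the real work, ruling out the degenerate configurations where a collapsed vertex could end up with a single gate.
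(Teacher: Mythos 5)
Your overall plan matches the paper's: first show $f_0$ is a morphism, then check optimality case by case according to the type of point of $T_0$. The morphism half is essentially right, though the paper achieves it more cleanly: one simply subdivides $[x,x']$ at the $p$-images of the vertices of $T$ lying in a lift $[\tilde{x},\tilde{x}']$, and then Lemma~\ref{lem:mu-prop} shows directly that $f_0$ is an isometry on each nondegenerate piece (each such piece is an edge of $T_r$, hence $r$-legal and mapped by $f$ without cancellation). Your extra layer of maximal $r$-legal subpaths followed by a further ``no-cancellation'' subdivision adds bookkeeping without buying anything.

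The genuine gap is in the last case of optimality, vertices $x$ with nontrivial stabilizer, where $p^{-1}(x)$ is an entire $\mu$-null subtree (a component of the preimage of $G_{r-1}$). You flag this as ``the main obstacle'' and then claim the hypotheses of Section~\ref{sec:folding-seq} --- that $\phi$ is fully irreducible relative to $\ffa$ and that $\ffa$ is not exceptional --- ``do the real work,'' via the slogan that $H_r$ cannot attach to $G_{r-1}$ along a single gate, ``else the lamination would be supported lower.'' That slogan is neither precise nor correct (having all attaching edges in one gate does not force $G_{r-1}$ to carry the lamination), and those hypotheses are not what is actually needed here. The paper's argument is purely structural and uses only the relative train track axioms: pick two \emph{distinct} vertices $\tilde{x},\tilde{x}' \in p^{-1}(x)\cap T_r$ (distinctness is available because $p^{-1}(x)$ has nontrivial stabilizer, whose translates of any one attaching vertex give infinitely many), take $T_r$-edges $e,e'$ based there, and observe that $\overline{e}\cup[\tilde{x},\tilde{x}']\cup e'$ is $r$-legal: the turns at $\tilde{x},\tilde{x}'$ are mixed, hence legal by (RTT-i), and the interior lies entirely in $T_{r-1}$ so contains no $H_r$-turns at all. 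Lemma~\ref{lem:mu-prop} then shows $f_0$ is an isometry on the $p$-image of this path, so the two directions it determines at $x$ belong to different gates. No input from the lamination or from relative full irreducibility is required. You do mention (RTT-i) in passing, but you do not assemble this chain of reasoning, and by pointing at the wrong hypotheses you obscure the actual (simpler) mechanism.
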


\begin{proof}
  We first show that $f_0$ is a morphism, which will follow from the
  definition of $\mu$ and properties of relative train track
  maps. Given a non-degenerate segment $[x,x']$ in $T_0$, choose
  $\tilde{x}\in p^{-1}(x)$ and $\tilde{x}'\in p^{-1}(x')$.  The
  intersection of $[\tilde{x},\tilde{x}']$ with the vertices of $T$ is
  a finite set $\{\tilde{x}_1,\ldots,\tilde{x}_{k-1}\}$.  Let
  $\tilde{x}_0:=\tilde{x}$ and $\tilde{x}_k:=\tilde{x}'$.  Taking the
  $p$-image of $\tilde{x}_i$ for $i\in\{0,\ldots,k\}$ yields a
  subdivision of $[x,x']$ into finitely many subsegments
  $[x_i,x_{i+1}]$, some of which may be degenerate.  We will ignore
  the degenerate subdivisions: they occur as the projections of edges
  in $T_{r-1}$ (all of which have $\mu$-measure zero).

  We claim that $f_0$ is an isometry in restriction to each of these
  subsegments.  Indeed, let $e=[\tilde{x}_i,\tilde{x}_{i+1}]$ be an
  edge in $T$.  Assume without loss of generality that
  $x_i\neq x_{i+1}$ so that $\mu(e)\neq 0$ and $e$ is therefore an
  edge in $T_r$.  It is an immediate consequence of Lemma
  \ref{lem:mu-prop} that for each $y\in e$,
  $\mu([f(\tilde{x}_i), f(y)])=\PFevalue \mu([\tilde{x}_i,y])$ and
  hence that $f_0$ is an isometry in restriction to $[x_i,x_{i+1}]$.

  We now address the optimality of $f_0$.  There are three types of
  points to consider: points in the interior of an edge, vertices with
  trivial stabilizer, and vertices with non-trivial stabilizer.  We
  have already established that $f_0$ is an isometry in restriction to
  edges, so there are two gates at each $x\in T_0$ contained in the
  interior of an edge.  If $x\in T_0$ is a vertex with trivial
  stabilizer, then $p^{-1}(x)$ is a vertex (Lemma \ref{lem:mu-prop})
  contained in $T_r \smallsetminus T_{r-1}$.  As $f$ is a relative
  train track map, there are at least two gates at $p^{-1}(x)$ and
  each is necessarily contained in $T_r$.  A short path in $T$
  containing $p^{-1}(x)$ entering through the first gate and leaving
  through the second will be legal.  Lemma \ref{lem:mu-prop} again
  gives that $f_0$ is an isometry in restriction to such a path, so
  there are at least two gates at $x$.

  Now let $x\in T_0$ be a vertex with non-trivial stabilizer.  Then
  $p^{-1}(x)$ is a subtree which is the inverse image of a component
  of $G_{r-1}$ under the quotient map $T\to G$.  Let
  $\tilde{x},\tilde{x}'\in p^{-1}(x)$ be distinct vertices in
  $T_r\cap T_{r-1}$ and let $d$ (resp.\ $d'$) be a direction based at
  $\tilde{x}$ (resp.\ $\tilde{x}'$) corresponding to an edge $e$
  (resp.\ $e'$) in $T_r$.  Lemma \ref{lem:mu-prop} provides that $d$
  and $d'$ determine distinct directions at $x$.  As mixed turns are
  legal, the path $\overline{e}\cup[\tilde{x},\tilde{x}']\cup e'$ in
  $T$ is $r$-legal.  A final application of Lemma \ref{lem:mu-prop}
  gives that the restriction of $f_0$ to the $p$-image of this path is
  an isometry, and hence that there are at least two directions at
  $x$.
\end{proof}

The reader will note that we have actually proved

\begin{lemma}
  $f_0$ is a train track map.
\end{lemma}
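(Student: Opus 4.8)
The plan is to leverage the two lemmas just proven — that $f_0\colon T_0\to T_1$ is an optimal morphism and that $f_0$ is an isometry in restriction to each edge of $T_0$ — and to upgrade this to the statement that $f_0$ is a train track map. By the definition of train track map recalled in the subsection \S``Train track maps,'' it remains only to verify two things: that $f_0$ is an embedding on each edge (equivalently, that it does not fold any single edge onto itself or send an edge over a vertex and back), and that $f_0$ maps legal turns to legal turns. The first of these is essentially already contained in the edge-isometry statement: if $f_0$ restricted to an edge $e$ of $T_0$ is an isometry onto its image, then in particular it is injective on $e$, so $f_0$ is an embedding on edges. (One should double check that an edge of $T_0$ really is a single subsegment $[x_i,x_{i+1}]$ of the type analyzed in the previous proof — this follows from Lemma~\ref{lem:simplicial} together with the fact that the subdivision points of $[x,x']$ used there are precisely the vertices of $T_0$ that lie on the segment, since non-degenerate subsegments come from edges of $T_r$ with positive $\mu$-measure.)

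The substantive point is that $f_0$ maps legal turns to legal turns, where ``legal'' refers to the illegal turn structure on $T_0$ (resp.\ $T_1$) induced by the optimal morphism $f_0$ itself. The strategy here is to pull the turn structure back to $T$: a turn $(d,d')$ at a point $x\in T_0$ corresponds, under the correspondence established in the optimality proof, to a turn at a vertex $\tilde x$ (or a pair of vertices in a collapsed subtree) in $T_r\subset T$, realized by directions into edges $e,e'$ of $T_r$. I would argue that a turn at $x$ is legal for $f_0$ if and only if the corresponding turn in $T$ is $r$-legal for $f$, i.e.\ does not lie in a single gate of the relative train track structure on $H_r$. One direction is immediate: if the turn in $T$ is $r$-legal, then by (RTT-iii) the short legal witness path $\bar e\cup[\tilde x,\tilde x']\cup e'$ (as in the last paragraph of the optimality proof) maps under $f_\#$ to an $r$-legal path, and by Lemma~\ref{lem:mu-prop} its $p$-image is mapped isometrically by $f_0$, so no cancellation occurs at $f_0(x)$ and the image turn is legal. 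For the converse (which gives that the turn structures genuinely correspond, not just that one refines the other), I would observe that if the turn in $T$ were illegal, then some iterate $f^k_\#$ of the witness path would backtrack at the image of $x$, and pushing forward via $p$ shows the same happens for $f_0^k$ (using $p\circ f = f_0\circ p$ and that $p$ does not create cancellation, since collapsed segments have $\mu$-measure zero and the $\mu$-isometry of $f_0$ on edges passes to iterates along $r$-legal paths). Hence the gates at $x$ for $f_0$ are exactly the images under the direction-correspondence of the gates at $\tilde x$ for $f$ in $H_r$.

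Given this identification of turn structures, legality is preserved: if $(d,d')$ is legal at $x$ for $f_0$, the corresponding turn at $\tilde x$ is $r$-legal, so by (RTT-i) (mixed turns legal) and (RTT-iii) ($r$-legal paths stay $r$-legal) its $f_\#$-image is again $r$-legal, and $Df_x(d), Df_x(d')$ are distinct directions realized again by edges of $T_r$; translating back through the correspondence, $Df_0$ sends $(d,d')$ to a legal turn at $f_0(x)$. Combined with the edge-embedding property, this shows $f_0$ is a train track map. The main obstacle I anticipate is the converse direction of the turn-structure identification — making precise that $p$ neither destroys nor manufactures illegality, i.e.\ that a turn is $f_0$-illegal in $T_0$ exactly when its preimage turn is $f$-illegal in the $H_r$-stratum of $T$ — since this requires carefully tracking the interaction between the collapse $p$, the iteration of $f_\#$, and the measure $\mu$, rather than any single clean invocation of a prior lemma. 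Everything else is a routine repackaging of the optimality proof already given.
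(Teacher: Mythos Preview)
Your proposal is correct, but you are working considerably harder than the paper does. The paper's own proof of this lemma is a single sentence: ``The reader will note that we have actually proved'' it in the course of the optimality proof. You correctly extract the edge--embedding property from that proof (isometry on edges implies injectivity on edges). For the legal--to--legal property, however, you take a longer route than necessary: you set out to identify the $f_0$--induced turn structure on $T_0$ with the pull-back via $p$ of the $r$--legal structure on $T$, and you rightly flag the converse direction ($r$--illegal $\Rightarrow$ $f_0$--illegal) as the nontrivial step. That converse is true, and your sketch of it is sound, but it is not needed. With the turn structures on $T_0$ and $T_1$ both taken to be those induced by the sequence of morphisms $(f_0,f_1,\ldots)$, the statement ``$f_0$ sends legal turns to legal turns'' is automatic from the definition: if $Df_0(d)$ and $Df_0(d')$ were identified by some composition $f_k\circ\cdots\circ f_1$, then $d$ and $d'$ would be identified by $f_k\circ\cdots\circ f_1\circ f_0$, contradicting legality of $(d,d')$. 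Thus the paper's implicit argument is simply: optimality gives a genuine train track structure, the edge--isometry gives the embedding condition, and legal--to--legal comes for free. What your approach buys is an explicit description of the gates of $f_0$ in terms of the relative train track structure on $T$, which is conceptually pleasant (and useful later when analyzing leaves of $\Lambda_\phi^+$ in $S_t$) but not logically required to close this particular lemma.
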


As $T_0$ and $T_0'$ are $\free$-equivariantly homeomorphic, there is a
bijection between ($\free$-orbits of) edges of each.  It is easily
verified that the transition matrix of $f_0$ and that of $f$ are
equal.  In particular, we will speak of edges, transition matrices, PF
eigenvalues, and related notions for $f_0\colon T_0\to T_1$, without
reference this bijection.

Next, we use $f_0$ to construct a folding path starting at
$S_0:=T_0$.  This folding path will converge in $\CVbound$ to a tree
$S_L$.  We then prove that $S_L$ is in fact the tree $\StableTree$
defined above.

\subsection*{Folding $T_0$}
Applying the canonical folding path construction, we obtain a folding
path $(S_t)_{t\in[0,L_1]}$ guided by $f_0\colon T_0\to T_1$ which
begins at $T_0=S_0$ and ends at $T_1=S_{L_1}$, where $L_1 = \frac{1}{2}\BBT(f_0)$.
Adapting a construction of Handel-Mosher \cite[Section 7.1]{HM:Axes},
we now extend this to a \emph{periodic fold path guided by $f_0$}.
For each $i\in\mathbb{N}$, let $T_{i}=\PFevalue^{-i}T_0\cdot\phi^i$,
whence we have optimal morphisms $f_i\colon T_i\to T_{i+1}$ satisfying
$\BBT(f_i)=\PFevalue^{-i}\BBT(f_0)$.  For each $i$, inductively define
$L_i:=L_{i-1}+\frac{1}{2}\BBT(f_{i-1})$ and extend the folding path (which
has so far been defined on $[0,L_{i-1}]$) using $f_{i-1}$ to a folding
path $(S_t)_{t\in[0,L_i]}$.  Define $L:=\lim_{i\to\infty} L_i$, which
is finite as $\BBT(f_i)$ is a geometric sequence.  We have thus
defined the trees $(S_t)_{t\in[0,L)}$.

The notation here is less than ideal.  In the above,
$(T_i)_{i\in \mathbb{N}}$ is used for the trees
$\PFevalue^{-i}T_0\cdot \phi^i$, while $(S_t)_{t\in [0,L)}$ denotes a
continuous folding path which is folded at constant speed.  The reason
for the differing names ($S$ and $T$) is simply that the
parameterizations differ; in particular $S_{L_i}=T_i$.

We now describe the maps $f_{t,s}$ for $s,t\in [0,L)$ with $s<t$.
Indeed, given $s,t$, there is a natural choice of a map
$f_{t,s}\colon S_s\to S_t$.  Suppose $s\in [L_i,L_{i+1})$ and
$t\in [L_j,L_{j+1})$.  Then
\begin{equation*}
  f_{t,s}:=f_{t,L_j}\circ f_{j-1}\circ f_{j-2}\circ\ldots\circ f_{i+1}\circ f_{L_{i+1},s}
\end{equation*}
The semi-flow property for the connection data follows from the
definitions.  Though our setting differs slightly from that of
\cite{BF:FreeFactorComplex}, Proposition 2.2 (5) can still be applied
to give that each tree $S_t$ has a well defined train track structure.

Along with the connection data, the fold path $(S_t)_{t\in[0,L)}$
forms a directed system in the category of $\free$-equivariant metric
spaces and distance non-increasing maps.  As direct limits exist in
this category, let $S_L:=\varinjlim S_t$ and let $f_{L,t}$ be the
direct limit maps.  The proof of the following proposition is
contained in Section 7.3 of \cite{HM:Axes}, though it is not stated in
this way.  While Handel-Mosher deal with trees in $\cv$ rather than
$\partial\CV$, the reader will easily verify that their proof goes
through directly in our setting.

\begin{proposition}[{\cite{HM:Axes}}]
  $S_L$ is a non-trivial, minimal, $\mathbb{R}$-tree.  Moreover $S_t$
  converges to $S_L$ in the length function topology.
\end{proposition}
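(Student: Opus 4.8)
The plan is to verify that the directed-system limit $S_L = \varinjlim S_t$ is non-trivial, minimal, and that convergence holds in the length-function (axes) topology, following the outline of Section 7.3 of \cite{HM:Axes}. First I would record the structural facts that are needed downstream: each $S_t$ is an $\mathbb{R}$-tree with a train track structure, and the maps $f_{t,s}\colon S_s\to S_t$ are $\free$-equivariant, surjective, distance non-increasing morphisms satisfying the semi-flow property. Since a direct limit of $\mathbb{R}$-trees along distance non-increasing $\free$-equivariant maps is again an $\mathbb{R}$-tree with an isometric $\free$-action (this is the standard fact that makes folding paths well-defined at the limit), $S_L$ is automatically an $\mathbb{R}$-tree; what must be checked is that it is not a point and that the action is minimal, after which minimality lets us pass to the minimal subtree without loss of generality.

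For non-triviality, the key is to exhibit a conjugacy class whose translation length stays bounded away from $0$ along the whole path. The natural candidate comes from the EG stratum $H_r$: take a legal circuit $\gamma$ in $T_0$ supported in $H_r$ (one exists because $H_r$ is EG with two gates). The central point is that legality is preserved along the periodic fold path guided by $f_0$ — this is exactly the content of the train track structure on each $S_t$ together with the fact that $f_0$ (and hence each $f_i$) is a train track map. Concretely, one shows that the $f_{t,s}$-image of the legal axis of $\gamma$ in $S_s$ is the legal axis of $\gamma$ in $S_t$, so that $\|\gamma\|_{S_t}$ is non-increasing but, being the $\mu$-length of a legal loop (cf. Lemma~\ref{lem:mu-prop}), never collapses: on each subinterval $[L_i,L_{i+1}]$ the rescaling by $\PFevalue^{-1}$ is compensated by the $\PFevalue$-stretching of legal paths under $f$, so $\|\gamma\|_{S_{L_i}}=\|\gamma\|_{S_{L_{i+1}}}$, and in between the length can only decrease by a controlled amount governed by the BBT constants, which form a summable geometric series. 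Hence $\|\gamma\|_{S_L}=\lim_t\|\gamma\|_{S_t}>0$, so $S_L$ is non-trivial; since $\free$ has rank $\ge 3$ and acts with a hyperbolic element, the minimal subtree is non-trivial, and one checks (again following \cite{HM:Axes}) that the action on all of $S_L$ is already minimal because each $S_t$ is minimal and minimality passes to direct limits under surjective equivariant maps.

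For convergence in the length-function topology, I would argue pointwise in $\free$: fix $g\in\free$ and show $\|g\|_{S_t}\to\|g\|_{S_L}$ as $t\to L$. The maps $f_{L,t}\colon S_t\to S_L$ are $1$-Lipschitz and equivariant, so $\|g\|_{S_L}\le \|g\|_{S_t}$ for all $t$, giving $\|g\|_{S_L}\le\liminf_t\|g\|_{S_t}$. For the reverse inequality one uses that $f_{L,t}$ is a limit of morphisms with BBT constants tending to $0$: any backtracking in the image of the axis of $g$ is bounded by $\sum_{i\ge j}\frac12\BBT(f_i)\to 0$, so $\|g\|_{S_L}\ge \|g\|_{S_t}-\varepsilon_t$ with $\varepsilon_t\to 0$. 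Combining the two estimates gives $\|g\|_{S_t}\to\|g\|_{S_L}$ for every $g$, which is precisely convergence in $\mathbb{R}^{\free}$, hence in $\cvclo$. The main obstacle here is the reverse (lower-bound) estimate on translation lengths: controlling how much the direct-limit map can fold the axis of an arbitrary $g$, and this is exactly where the geometric decay $\BBT(f_i)=\PFevalue^{-i}\BBT(f_0)$ and the semi-flow property do the work — the same mechanism that makes $L=\lim L_i$ finite also makes the total backtracking summable. I would therefore organize the write-up around that BBT estimate, citing \cite[Section 7.3]{HM:Axes} for the detailed verification and noting, as the excerpt already does, that the argument there transfers verbatim to the present setting of trees in $\partial\CV$.
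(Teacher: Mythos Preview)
Your outline is correct and matches the paper's approach: the paper does not give its own proof but simply cites \cite[Section~7.3]{HM:Axes} and asserts that the argument carries over unchanged to trees in $\partial\CV$, which is exactly the framework you sketch. Two small clarifications: for a legal conjugacy class the translation length is actually \emph{constant} along the entire fold path (legal segments map isometrically under each $f_{t,s}$), so no BBT estimate is needed there---the BBT bounds are only required for the convergence $\|g\|_{S_t}\to\|g\|_{S_L}$ of an arbitrary $g$; and the phrase ``supported in $H_r$'' is slightly off since $T_0$ is already the collapsed tree with no strata, but the existence of a legal circuit follows as you say from irreducibility of the transition matrix and the train track property of $f_0$.
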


We have now described two trees in the boundary of outer space:
$\StableTree=\lim_{n\to\infty} T_0'\phi^n$ and $S_L$.  We observe that
both $S_0$ and $T_0'$ are points in the relative outer space $\relcv$,
which inherits the subspace topology from $\CVclo$.  Moreover, $\phi$
is fully irreducible relative to $\mathcal{A}$, and as such, it acts
with north-south dynamics on $\relCVClo$ \cite{G:Loxodromic}. Recall
that for each $i\in\mathbb{N}$,
$S_{L_i}=\PFevalue^{-i}S_0\cdot\phi^i$, and that $L_i\to L$.  As $S_L$
is the limit of the fold path $(S_t)_{t\in [0,L)}$, we conclude

\begin{lemma}
  $S_L=\StableTree$.
\end{lemma}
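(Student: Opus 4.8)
The plan is to identify the two limiting trees by exploiting the north-south dynamics of $\phi$ on the compactified relative outer space $\relCVClo$, together with the fact that the folding path has been built so that its ``milestones'' $S_{L_i}$ coincide (after rescaling) with the iterates of $\phi$ applied to $S_0$. First I would record that both $S_0 = T_0$ and $T_0'$ lie in $\relcv$: this is because $T_0$ is obtained from $T$ by collapsing the $\ffa$-minimal subtree, so its nontrivial vertex stabilizers are exactly the components of $\ffa$, and the action is otherwise free and simplicial; the same holds for $T_0'$ by the same construction. Since $T_0$ and $T_0'$ are $\free$-equivariantly homeomorphic (as established in the excerpt), they project to the same point of the projectivized relative outer space $\relCV$. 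Consequently $\StableTree = \lim_n T_0' \phi^n / \PFevalue^n$ and $\lim_n T_0 \phi^n / \PFevalue^n$ are the same point of $\relCVClo$ — indeed the attracting fixed point of $\phi$ under its north-south action on $\relCVClo$, which exists because $\phi$ is fully irreducible relative to $\ffa$ and $\ffa$ is not exceptional (so \cite{G:Loxodromic} applies).

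Next I would invoke the two facts already in hand: that $S_{L_i} = \PFevalue^{-i} S_0 \cdot \phi^i$ for each $i \in \mathbb{N}$, and that $L_i \to L$ with $S_t \to S_L$ in the length function topology (the preceding Proposition). Combining these, the subsequence $(S_{L_i})_i$ of the folding path converges to $S_L$; but $(S_{L_i})_i = (\PFevalue^{-i} S_0 \cdot \phi^i)_i$ is precisely the sequence whose limit defines the attracting fixed point of $\phi$ on $\relCVClo$, i.e.\ $\StableTree$ (using the previous paragraph's identification $[S_0] = [T_0']$ in $\relCV$). Since limits in $\cvclo$ are unique, $S_L = \StableTree$. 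One small point to check carefully here: the convergence $\PFevalue^{-i} S_0 \cdot \phi^i \to \StableTree$ coming from north-south dynamics is a priori only projective convergence in $\relCVClo$, whereas the folding path converges in the unprojectivized length function topology; so I would need to observe that the normalization by $\PFevalue^i$ is exactly the one making the lengths converge (this is the standard fact that for a relative train track representative the PF eigenvalue is the correct rescaling factor, and it is baked into the construction of $\mu$ and hence of $T_0$), which pins down the scale and upgrades projective convergence to convergence in $\cvclo$.

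The main obstacle, such as it is, is bookkeeping rather than mathematics: one must be sure that the reparametrization relating the continuous path $(S_t)_{t \in [0,L)}$ to the discrete sequence $(T_i)_i = (\PFevalue^{-i} T_0 \cdot \phi^i)_i$ is the one claimed — that is, that no hidden rescaling has crept in when the folding path was extended past each $L_i$ using $f_i$, and that $S_{L_i} = T_i$ on the nose (not merely up to homothety). This is guaranteed by the inductive definition $L_i = L_{i-1} + \tfrac12 \BBT(f_{i-1})$ together with $T_{L_1} = \PFevalue^{-1} T_0 \cdot \phi$ from the definition of $f_0$, but it deserves an explicit sentence. Once that is in place the lemma is immediate from uniqueness of limits.
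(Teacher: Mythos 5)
Your proof follows essentially the same route as the paper: observe that $S_0$ and $T_0'$ lie in $\relcv$, invoke north-south dynamics of $\phi$ on $\relCVClo$ (citing \cite{G:Loxodromic}), and use $S_{L_i}=\PFevalue^{-i}S_0\cdot\phi^i$ together with $L_i\to L$ to identify the fold-path limit with $\StableTree$. One small slip: you assert that $T_0$ and $T_0'$ define the \emph{same} point in $\relCV$ because they are $\free$-equivariantly homeomorphic, but homeomorphism only places them in the same open simplex of $\relCV$, not at the same point; fortunately this stronger claim is not needed, since north-south dynamics attracts every point of $\relcv$ to $\StableTree$, which is all the paper (correctly) uses.
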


We conclude this section with a lemma.

\begin{lemma}\label{lem:simplicial-path}
  For all $t\in [0,L)$, the tree $S_t$ is simplicial.
\end{lemma}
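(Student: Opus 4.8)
The plan is to show that every tree $S_t$ for $t \in [0, L)$ lies on one of the finitely many folding segments $(S_t)_{t \in [L_{i-1}, L_i]}$ guided by the optimal morphism $f_{i-1} \colon T_{i-1} \to T_i$ between simplicial trees, and that each such segment consists of simplicial trees. The point of restricting to $t < L$ (rather than $t = L$) is precisely that $t$ then lies in some half-open interval $[L_{i-1}, L_i)$, so only finitely many folds have been performed to produce $S_t$ from the simplicial tree $T_{i-1}$.

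First I would reduce to a single fold segment. Fix $t \in [0, L)$. Since $L_i \to L$, there is a least $i$ with $t < L_i$, and then $t \in [L_{i-1}, L_i)$ (with $L_0 = 0$); the tree $S_t$ is obtained from $S_{L_{i-1}} = T_{i-1}$ by the canonical Guirardel--Levitt folding path guided by the optimal morphism $f_{i-1} \colon T_{i-1} \to T_i$, stopped at parameter $t - L_{i-1} \in [0, \tfrac{1}{2}\BBT(f_{i-1}))$. Since $T_{i-1} = \PFevalue^{-(i-1)} T_0 \cdot \phi^{i-1}$ is simplicial (it is a rescaled translate of $T_0$, which is simplicial by Lemma~\ref{lem:simplicial}), it suffices to prove: along the canonical folding path guided by an optimal morphism $f_0 \colon T_0 \to T_1$ between simplicial $\free$-trees, every intermediate tree $S_\tau$ with $\tau$ strictly less than $\tfrac{1}{2}\BBT(f_0)$ is simplicial.

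For this I would argue exactly as in the proof of Lemma~\ref{lem:simplicial}: show the $\free$-orbit of every point of $S_\tau$ is discrete. Recall $S_\tau$ is the quotient of $T_0$ by the relation $a \sim_\tau b$ iff $f_0(a) = f_0(b)$ and the identification time $\tau(a,b) < \tau$. Let $q \colon T_0 \to S_\tau$ be the quotient map and fix $x \in S_\tau$, $\tilde{x} \in q^{-1}(x)$, and $g \in \free$. If $\tilde{x}$ and $g\tilde{x}$ are identified in $S_\tau$ then $g \in \Stab(x)$; otherwise $d_{S_\tau}(x, gx) > 0$, and I claim this distance is bounded below independently of $g$ (but depending on $x$). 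Indeed, $S_\tau$ receives an $\free$-equivariant morphism $f_{1,\tau} \colon S_\tau \to T_1$ which is an isometry on each edge of $S_\tau$ (this is part of the connection data); since $T_1$ is simplicial, $f_{1,\tau}$ pushes the segment $[x, gx]$ to a segment in $T_1$ whose length is a lower bound for $d_{S_\tau}(x,gx)$ whenever that image is nondegenerate. The remaining case is that $f_{1,\tau}([x,gx])$ is degenerate, i.e. $f_{1,\tau}(x) = f_{1,\tau}(gx)$; then $f_0(\tilde{x}) = f_0(g\tilde{x})$ but $\tau(\tilde{x}, g\tilde{x}) \ge \tau$, since $x \ne gx$ in $S_\tau$. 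Here is where $\tau < \tfrac12\BBT(f_0) = L_1$ is used: the identification time $\tau(\tilde{x}, g\tilde{x})$ takes values in a discrete set once we fix the $\free$-orbit of $\tilde{x}$ — because there are only finitely many $\free$-orbits of segments in the simplicial tree $T_0$ joining a vertex orbit to its translates, and the sup defining $\tau(\cdot,\cdot)$ is realized on finitely many edge-image configurations under the simplicial morphism $f_0$ — so the set of values of $\tau(\tilde{x}, g\tilde{x})$ that are $\ge \tau$ but correspond to $d_{S_\tau}(x, gx)$ small is cut off, forcing a positive lower bound on $d_{S_\tau}(x, gx)$.

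The main obstacle is the last step: making precise that, for a simplicial domain $T_0$ and a simplicial-tree-valued optimal morphism $f_0$, the identification-time function $\tau(a, b)$ takes only finitely many values in restriction to pairs $(a, gb)$ with $a, b$ in fixed vertex orbits, so that discreteness of orbits in $T_0$ propagates to $S_\tau$. I expect this to follow from the structure of the canonical folding path — $f_0$ being linear on edges of the simplicial tree $T_0$ and $T_1$ being simplicial means the functions $x \mapsto d_{T_1}(f_0(x), f_0(a))$ are piecewise linear with finitely many breakpoints per orbit of segment, hence their suprema over $[a, gb]$ land in a locally finite set — but it requires care to phrase it $\free$-equivariantly. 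Once this discreteness is in hand, $S_\tau$ is a simplicial $\free$-tree (it is an $\mathbb{R}$-tree by Guirardel--Levitt, with discrete orbits and finitely many orbits of branch points), completing the proof.
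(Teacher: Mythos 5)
Your reduction to a single fold segment is exactly what the paper does: since $L_i \to L$, any $t \in [0,L)$ lies in some interval $[L_{i}, L_{i+1})$, and $S_{L_i} = \PFevalue^{-i}T_0\cdot\phi^i$ is simplicial by Lemma~\ref{lem:simplicial}, so the problem reduces to showing that the canonical Guirardel--Levitt folding path between two simplicial $\free$-trees with trivial edge stabilizers consists of simplicial trees. That reduction is correct and is where the two proofs agree.

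Where they diverge is how the single-segment case is handled. The paper simply invokes Proposition 1.1 of Horbez's paper \cite{H:CyclicSBoundary}, which is stated precisely to handle this: for a canonical optimal folding path whose endpoints are simplicial with trivial edge stabilizers, every intermediate tree is simplicial. You instead try to re-derive this from scratch by adapting the proof of Lemma~\ref{lem:simplicial} (discreteness of orbits), and you correctly flag the gap yourself: you have not actually established that discreteness of the set of identification times $\tau(\tilde x, g\tilde x)$ forces $d_{S_\tau}(x,gx)$ to be bounded below. That implication is not automatic, because $d_{S_\tau}$ is a quotient metric and there is no clean formula expressing $d_{S_\tau}(x,gx)$ in terms of the single number $\tau(\tilde x, g\tilde x)$; one must control geodesics in $S_\tau$ more carefully, and this is exactly the content of the cited proposition. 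So the proposal as written contains a genuine gap at the crucial step. Replacing your sketched discreteness argument with a citation of \cite[Proposition 1.1]{H:CyclicSBoundary} (after noting that the trees $T_i$ have trivial edge stabilizers) closes the gap and recovers the paper's proof; alternatively one would need to carry out in detail the piecewise-linear analysis of the quotient pseudometric that you gesture at, which is essentially reproving Horbez's result.
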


\begin{proof}
  Let $t\in[0,L)$.  If $t=0$, Lemma~\ref{lem:simplicial} provides that
  $S_0$ is simplicial.  Since $S_{L_i}=\PFevalue^{-i}S_0\cdot \phi^i$,
  the lemma holds when $t=L_i$ for some $i\in\mathbb{N}$.  The other
  possibility is that $t\in (L_i,L_{i+1})$ for some $i$.  Since both
  $S_{L_i}$ and $S_{L_{i+1}}$ have trivial edge stabilizers,
  Proposition 1.1 of \cite{H:CyclicSBoundary} applies to the folding
  path guided by $f_i$ and allows one to concluded that all trees
  $S_t$, $t\in [L_i,L_{i+1}]$ are simplicial, as desired.
\end{proof}
\section{Stable tree is $\mathcal{Z}^{\m}$-averse}
\label{sec:mixing}
Our present aim is to understand $\StableTree$; we would like to show
that it is $\mathcal{Z}^{\m}$-averse.  In this section, we will use
the leaves of the topmost lamination $\Lambda_\phi^+$ to construct a
transverse covering of $\StableTree$, then use the transverse covering
to achieve our goal.

\begin{definition}
  Let $G$ be a group and $T$ be an $\mathbb{R}$-tree equipped with an
  action of $G$ by isometries; and let $K\subseteq T$ be a subtree.
  We say that the action $G \curvearrowright T$ is \emph{supported on
    $K$} if for any finite arc $J\subseteq T$, there are
  $g_1,\ldots, g_r\in G$ such that
  $I\subseteq g_1 K\cup\ldots\cup g_r K$.
\end{definition}

Let $I_0$ be a segment of a leaf of the lamination $\lamination$ in
$S_0$.  Define the arc $I_t$ in $S_t$ by $I_t:=f_{t,0}(I_0)$.  We will
denote $I_L$ simply by $I$ and we will call any segment in
$\StableTree$ obtained in this way \emph{a segment of a leaf of
  $\lamination$}.

\begin{lemma}\label{lem:support}
  The action $\free\curvearrowright \StableTree$ is supported on
  $I$.
\end{lemma}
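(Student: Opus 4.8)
The plan is to show that the $\free$-orbit of $I$ covers $\StableTree$ in the sense of the definition above, using the fact that $I$ is a segment of a leaf of the filling lamination $\lamination$ together with the structure of the folding path.

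First I would recall that since $\lamination$ is an attracting lamination for $\phi$ whose topmost EG stratum $H_r$ has Perron--Frobenius eigenvalue $\PFevalue$, a generic leaf of $\lamination$ is $r$-legal (after passing to the CT / relative train track representative) and its realization in $T_0$ (hence in $S_0$) is a legal line for the train track structure on $S_0$ coming from $f_0$. The key point is that legal segments are never folded: the morphisms $f_{t,s}$ restrict to isometric embeddings on legal paths, so $I_t = f_{t,0}(I_0)$ grows at a controlled rate and, crucially, maps isometrically forward along the path. Thus $I$ is a genuine (non-degenerate) arc in $\StableTree$, and more importantly every $S_t$ (and $\StableTree$) is ``built out of'' translates of the images of the edges of $T_0$, which in turn are eventually covered by iterated images of legal segments of leaves.

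The main step: fix a finite arc $J \subseteq \StableTree$. Since $S_t \to \StableTree = S_L$ in the length function topology (equivalently, in the equivariant Gromov--Hausdorff topology) and the maps $f_{L,t}$ are direct limit maps, $J$ is (up to a controlled error) the $f_{L,t}$-image of an arc $J_t \subseteq S_t$ for $t$ close to $L$. Each $S_t$ is simplicial (Lemma~\ref{lem:simplicial-path}), so $J_t$ is covered by finitely many edges of $S_t$; each such edge is the $f_{t,0}$-image of a legal segment in $S_0$ once $t$ is large (here one uses that every edge of $T_0$, equivalently every edge of $T$ in $T_r$, has the property that its iterated $f_\#$-images cross more and more of the EG stratum, and that such iterated images contain long legal subsegments of leaves of $\lamination$ — this is exactly where $\lamination$ being the EG lamination of $H_r$ and $\phi$ being relatively fully irreducible is used). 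Since leaves are bireccurrent, any such long legal segment is a translate of a subsegment of the fixed leaf through $I_0$, so finitely many $\free$-translates of $I_0$ cover $J_0$, and applying $f_{t,0}$ then $f_{L,t}$ and using $\free$-equivariance of all the folding maps, finitely many translates $g_1 I, \ldots, g_k I$ cover $J$.

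The hard part will be making the ``$J$ is an image of an arc in $S_t$ and edges of $S_t$ are covered by translates of legal leaf segments'' argument precise while controlling the errors introduced by passing to the limit: one must check that no edge of $S_t$ can consist entirely of material that is ``new'' and unrelated to the leaf — i.e.\ that the folding genuinely propagates the leaf segment across all of $S_t$. This is handled by the observation that the transition matrix of $f_0$ equals that of $f$, which is irreducible on $H_r$ with $\PFevalue > 1$, so under iteration of $f_\#$ the image of every edge crosses $H_r$, and combined with relative full irreducibility (so that the $G_{r-1}$ part is ``absorbed'') one gets that iterated images of any edge contain arbitrarily long legal subsegments of generic leaves of $\lamination$. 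Once that is in hand, bireccurrence of the generic leaf and $\free$-equivariance close the argument.
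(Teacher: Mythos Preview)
Your overall strategy --- use the folding path $(S_t)$, the fact that $I_0$ is legal so $I_t=f_{t,0}(I_0)$ is carried isometrically forward, pull $J$ back to some simplicial $S_t$, and cover it edge-by-edge by translates --- is right and matches the paper. But the mechanism you propose for actually producing the covering translates has a gap.

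The problematic step is the appeal to birecurrence: you assert that ``any such long legal segment is a translate of a subsegment of the fixed leaf through $I_0$, so finitely many $\free$-translates of $I_0$ cover $J_0$.'' Birecurrence says that any finite subpath of a generic leaf recurs infinitely often in that leaf (downstairs); upstairs this means $\free$-translates of $I_0$ appear infinitely often along the leaf. It does \emph{not} say that an arbitrary leaf segment, let alone an arbitrary legal segment, is covered by translates of the fixed finite arc $I_0$: translates of $I_0$ recur along the leaf but need not cover it. So even granting that each edge of $S_t$ lifts to a legal leaf segment in $S_0$ (which you assert but do not justify --- a priori you only get legal sub-edge segments), the conclusion does not follow.

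The paper sidesteps this with a cleaner observation that you are missing. Since $I_0$ is legal, the length of $I_t$ is constant in $t$; meanwhile the maximal edge length in $S_t$ tends to $0$ as $t\to L$, because $S_{L_i}=\PFevalue^{-i}S_0\cdot\phi^i$. Hence for $t$ large enough $I_t$ crosses an entire edge of $S_t$, and then irreducibility of the transition matrix of $f_0$ forces $I_t$ to cross \emph{every} $\free$-orbit of edges in $S_t$. Writing $J_t$ as an edge path $e_0e_1\cdots e_k$, one chooses $g_j\in\free$ with $e_j\subset g_jI_t$ and pushes forward by the equivariant map $f_{L,t}$ (an isometry on the legal $I_t$) to get $J\subset\bigcup_j g_jI$. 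No birecurrence is needed; the point is that $I_t$ itself already contains a representative of every edge type once the edges are short enough, rather than trying to realize edges of $S_t$ as pieces of translates of $I_0$ back in $S_0$.
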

\begin{proof}
  Let $I=[x,y]$ and let $J=[x',y']$ be a nondegenerate arc in
  $\StableTree$.  The construction in Section \ref{sec:folding-seq}
  provides an optimal folding path $(S_t)_{t \in [0,L]}$, and optimal
  morphisms $f_{s,t} \colon S_t \to S_s$ for all $s,t\in[0,L]$ with
  $s>t$ which satisfy the semi-flow property.  It follows easily from
  the definitions that for a folding path $(S_t)$ and any $z$ in
  $S_L = \StableTree$, the set $f^{-1}_{L,0}(z)$ is a discrete set of
  points in $S_0$.  Let $x_0'\in f^{-1}_{L,0}(x')$ and
  $y_0'\in f^{-1}_{L,0}(y')$ be points in $S_0$ chosen so that
  $(x_0',y_0')$ contains no points in
  $f^{-1}_{L,0}(x')\cup f^{-1}_{L,0}(y')$ and define
  $J_0=[x_0',y_0']$.  Since $I_0$ is legal, it is never folded under
  the maps $f_{t,0}$, so the corresponding property already holds for
  $I_0$.  Define the arc $J_t$ in $T_t$ by $J_t:=[f_{t,0}(J_0)]$.  The
  definitions of $I_0$ and $J_0$ guarantee that $[f_{L,0}(I_0)]=I$ and
  similarly for $J_0$.  The semiflow property of the maps $f_{s,t}$
  gives that for all $s,t \in [0,L]$ with $s>t$, we have
  $[f_{s,t}(I_t)] = I_s$ (resp.\ $[f_{s,t}(J_t)] = J_s$).

  Since $I_0$ is a leaf segment and therefore legal with respect to
  the train track structure on $S_0$, it is never folded under the
  maps $f_{t,0}$.  In particular, the length of $I_t$ is constant in
  $t$.  The maximum length of any edge in $S_t$ tends to $0$ as
  $t\to L$ because edge lengths can only decrease along the fold path
  and the metric in $S_{L_i}$ has been scaled by $\PFevalue^{-i}$.
  Thus, for sufficiently large $t$, $I_t$ crosses an entire edge of
  $S_t$.  Irreducibility of the transition matrix for $f_0$ implies
  that by further enlarging $t$, we may assume that $I_t$ crosses
  an edge from every $\free$-orbit of edges in $S_t$.

  We are now ready to complete the proof.  Indeed, write $J_t$ as an
  edge path $J_t=e_0e_1\ldots e_k$ in $S_t$ (the first and last edges
  may be partial edges).  Since $I_t$ crosses every $\free$-orbit of
  edges in $S_t$, there exist $g_0,\ldots ,g_k\in \free$ so that for
  all $j$, $g_jI_t$ crosses the edge $e_j$.  Now we simply use
  $\free$-equivariance of the maps $f_{L,t}$ to conclude that
  \begin{equation*}
    f_{L,t}(J_t)\subseteq g_0f_{L,t}(I_t)\cup
    g_1f_{L,t}(I_t)\cup\ldots\cup g_kf_{L,t}(I_t)
  \end{equation*}
  As $I_t$ is legal, $f_{L,t}(I_t)=I$.  While $J_t$ is not necessarily
  legal, it's still true that
  $J=[f_{L,t}(J_t)] \subseteq f_{L,t}(J_t)$, completing the proof.
\end{proof}

\subsection{Mixing and indecomposable trees}
A tree $T \in \CVclo$ is \emph{mixing} if for all finite subarcs
$I, J \subset T$, there exist $g_0, \ldots, g_k \in \free$ such that
$J \subseteq g_0I \cup g_1I \cup \cdots \cup g_kI$ and
$g_jI \cap g_{j+1}I \neq \emptyset$ for all
$j \in \{0, \ldots ,k-1\}$.  A tree $T \in \CVclo$ is called
\emph{indecomposable} \cite{G:Actions} if it is mixing and the $g_j$'s
can be chosen so that $g_j I\cap g_{j+1}I$ is a non-degenerate arc for
each $j\in\{0,\ldots,k-1\}$.

\begin{lemma}\label{lem:mixing}
  $\StableTree$ is mixing.
\end{lemma}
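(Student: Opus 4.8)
The plan is to bootstrap from Lemma~\ref{lem:support}, which already gives that $\free\curvearrowright\StableTree$ is supported on a single leaf segment $I$. The key observation is that a leaf $\ell$ of the lamination $\lamination$, being bireccurrent, crosses any fixed finite subsegment of itself infinitely often from both ends; moreover the nested leaf segments $I_t=f_{t,0}(I_0)$ are all legal, hence unfolded, so the combinatorial structure of $\ell$ as a bi-infinite legal edge-path persists all the way to $\StableTree$. First I would upgrade the covering produced in Lemma~\ref{lem:support} from an arbitrary covering $J\subseteq g_0I\cup\cdots\cup g_kI$ to an \emph{overlapping chain}: that is, I want to reorder and augment the translates so that consecutive ones meet. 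The point is that $I$ itself, being a leaf segment, is contained in a longer leaf segment $I'$ which (by bireccurrence) can be covered by translates of $I$ that overlap in the obvious way along the leaf — consecutive occurrences of the pattern $I$ inside the longer leaf $\ell$ produce translates $g_jI$, $g_{j+1}I$ with $g_jI\cap g_{j+1}I\neq\emptyset$. So any translate $gI$ can be connected to any other translate $g'I$ by such an overlapping chain, provided both $gI$ and $g'I$ lie on a common leaf of $\lamination$ in $\StableTree$; and all translates of $I$ do lie on such leaves since $I$ is a leaf segment and $\free$ permutes the leaves.

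Concretely, the steps are: (1) Observe that any two $\free$-translates $gI$, $g'I$ of the leaf segment $I$ are connected by an overlapping chain of translates of $I$. This is the heart of the argument: it follows from the fact that a generic leaf $\ell$ of $\lamination$ is bireccurrent and quasiperiodic, so for any finite patterns one finds, inside $\ell$, occurrences that chain up with overlaps; pushing this through the folding path via $f_{t,0}$ (which never folds the legal segments $I_t$) transports the overlapping chain structure to $\StableTree=S_L$. (2) Given arbitrary finite subarcs $I,J\subset\StableTree$ as in the definition of mixing, apply Lemma~\ref{lem:support} to cover $J$ by finitely many translates $g_0I,\ldots,g_kI$. (3) Since $J$ is connected, we may discard redundant translates and reindex so that $g_jI\cap g_{j+1}I\neq\emptyset$ for all $j$ — if there is a gap between the part of $J$ covered by $g_0I,\ldots,g_jI$ and the part covered by the rest, use step (1) to insert an overlapping chain of translates of $I$ bridging $g_jI$ to the next needed translate, all of which automatically meet $J$ or at least fit into the required chain structure. (4) Finally, to cover $J$ starting from a specified $I$ rather than from an arbitrary translate, prepend an overlapping chain from $I$ to $g_0I$, again by step (1). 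This yields exactly the overlapping covering required by the definition of mixing.

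The main obstacle I expect is step (1): making precise the claim that bireccurrence of the leaf, combined with the persistence of legal segments under folding, yields overlapping chains between arbitrary translates in the \emph{limit} tree $\StableTree$ rather than merely in the simplicial trees $S_t$. One has to be careful that "overlap" is witnessed by a nondegenerate — or at least nonempty — intersection in $\StableTree$, where distinct edges of $S_t$ may have been identified in the limit; the cleanest route is to work at finite stage $t$ (where $S_t$ is simplicial by Lemma~\ref{lem:simplicial-path}, and $I_t$ crosses every $\free$-orbit of edges for $t$ large, as established in the proof of Lemma~\ref{lem:support}), produce the overlapping chain there using the leaf combinatorics, and then push forward by the $\free$-equivariant map $f_{L,t}$, checking that overlaps are preserved because $f_{L,t}(I_t)=I$ and images of intersecting arcs intersect. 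A secondary point to verify is that a generic leaf of $\lamination$ really does visit its own subsegments with the kind of overlapping recurrence needed — this is standard for attracting laminations of relatively fully irreducible automorphisms (the leaf is quasiperiodic), and I would cite \cite{BFH:Tits} for the relevant properties of generic leaves.
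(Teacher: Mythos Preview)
Your proposal has a genuine gap: you only argue that an arbitrary arc $J$ can be covered by overlapping translates of the \emph{fixed leaf segment} $I$. Mixing requires that for \emph{every} pair of arcs $I',J'$, the arc $J'$ is covered by overlapping translates of $I'$. To reduce to the fixed leaf segment one needs both directions --- cover $J$ by overlapping translates of $I$, \emph{and} cover $I$ by overlapping translates of $J$ --- and then splice. You never address the second direction. This is not a formality: the preimage $J_t$ of an arbitrary arc $J$ need not be legal, so it does not embed isometrically in $\StableTree$, and one must first pass to a legal subsegment of $J_t$ whose image in $\StableTree$ is a nondegenerate subarc of $J$. The paper handles exactly this point.

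You also overcomplicate the direction you do treat. Your step~(1) --- connecting arbitrary translates $gI$, $g'I$ by overlapping chains via birecurrence --- is both unnecessary and not adequately justified (birecurrence gives chains along a single leaf; it says nothing about linking translates lying on distinct leaves, and $\free$ does permute the leaves). The paper avoids all of this by observing that the covering already produced in the proof of Lemma~\ref{lem:support} is \emph{automatically} overlapping: there the translates $g_jI_t$ were chosen to contain the consecutive edges $e_j$ of the edge path $J_t=e_0e_1\cdots e_k$, and consecutive edges share a vertex, so $g_jI_t\cap g_{j+1}I_t\neq\emptyset$. Pushing forward by $f_{L,t}$ preserves these nonempty intersections. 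No birecurrence, no bridging, no reindexing is needed.
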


\begin{proof}
  The proof is similar to that of Lemma \ref{lem:support}, so we will
  retain our notation from that proof.  Indeed, it's clearly enough to
  show that every arc $J$ can be covered by finitely many translates
  with non-empty overlap of the fixed arc $I$ and conversely that $I$
  can be covered similarly by translates of $J$.  Recall the cover of
  $J$ by translates of $I$ constructed in proof of Lemma
  \ref{lem:support}.  Since consecutive edges in the edge path of
  $J_t=e_0\ldots e_k$ intersect in a point, it follows that
  $g_jI_t\cap g_{j+1}I_t\neq \emptyset$ for all
  $j\in\{0,\ldots,k-1\}$.  Again, this behavior persists in the limit.

  Conversely, to see that $I$ can be covered by translates of $J$ we
  use essentially the same argument as before, only now there is a
  slight difficulty in producing an edge in some $J_t$ that
  isometrically embeds in the limit.  Now $J_t$ may have illegal
  turns, so we write $J_t$ as a concatenation of maximal legal
  subpaths, $J_t=J_t^0J_t^1\ldots J_t^k$.  Now $f_{L,t}(J_t)$ is a
  concatenation of the $f_{L,t}$-images of $J_t^i$, which are
  themselves segments in $S_L$.  Thus, the tightened image
  $J=[f_{L,t}(J_t)]$ is contained in the union
  $f_{L,t}(J_t^0)\cup\ldots\cup f_{L,t}(J_t^k)$.  Now choose an
  $i\in \{0,\ldots,k\}$ so that $J\cap f_{L,t}(J_t^i)$ is a
  non-degenerate subsegment of $J$ and replace $J$ by the subsegment
  $J'=J\cap f_{L,t}(J_t^i)$.  The proof of Lemma \ref{lem:support} can
  now be applied to $J'$, allowing us to conclude that $I$ can be
  covered by finitely many translates $J'$ with nonempty overlaps.  As
  $J'$ is a subsegment of $J$, the same finite set of group elements
  witnesses the fact that $I$ can be covered by finitely many
  translates $J$ with nonempty overlaps.
\end{proof}

\subsection{Transverse families and transverse coverings}
A subtree $Y$ of a tree $T$ is called \emph{closed} \cite[Definition
2.4]{G:LimitGroups} if $Y\cap\sigma$ is either empty or a path in $T$
for all paths $\sigma\subset T$; recall that paths are defined on
closed intervals.  A \emph{transverse family} \cite[Definition
4.6]{G:LimitGroups} of an $\mathbb{R}$-tree $T$ is a family
$\mathcal{Y}$ of non-degenerate closed subtrees of $T$ such that any
two distinct subtrees in $\mathcal{Y}$ intersect in at most one point.
If every path in $T$ is covered by finitely many subtrees in
$\mathcal{Y}$, then the transverse family is called a \emph{transverse
  covering}.

The idea of the following definition is to start with an interval and
``fill it out'' into an entire subtree by translating it around,
always requiring that overlaps are non-degenerate.

\begin{definition}[The transverse family generated by $\lamination$]
  Let $I=[x,y]$ be a segment of a leaf of $\lamination$ in
  $\StableTree$.  Define $Y_{I}$ as the union of all arcs $J$ such
  that there exists $g_0, \ldots, g_k \in \free$ satisfying:
  \begin{itemize}
  \item $J \subseteq g_0I \cup \cdots \cup g_k I$,
  \item $g_jI \cap g_{j+1}I$ is a non-degenerate segment for each
    $i\in\{0,\ldots,k-1\}$, and
  \item $g_0I\cap I$ is a non-degenerate segment.
  \end{itemize}
  It's immediate that the collection
  $\mathcal{Y} = \{gY_I\}_{g \in \free}$ is a transverse family in
  $\StableTree$ since, by definition, distinct $\free$-translates of
  $Y_I$ intersect in a point or not at all.  This construction is
  essentially due to Guirardel-Levitt.
\end{definition}

\begin{lemma}\label{lem:transverse-cover} With notation as above,
  $Y_I$ is indecomposable with respect to the $\Stab(Y_I)$ action.
  Moreover, $\mathcal{Y}=\{gY_I\}_{g\in\free}$ is a transverse
  covering of $\StableTree$.
\end{lemma}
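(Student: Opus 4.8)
The plan is to prove the two assertions in sequence, using the mixing property of $\StableTree$ (Lemma~\ref{lem:mixing}) and Lemma~\ref{lem:support} as the main inputs. For the first assertion, that $Y_I$ is indecomposable with respect to the $\Stab(Y_I)$-action: I would first observe that $Y_I$ is genuinely a subtree (it is connected by construction, since consecutive translates of $I$ overlap, and it is closed because the overlaps are non-degenerate and $\StableTree$ is an $\mathbb R$-tree). Next I would verify that $\Stab(Y_I)$ acts on $Y_I$ with dense orbits in the relevant sense; the key point is that $I$ itself lies in $Y_I$ (take $k=0$ and $g_0 = 1$), and every element $g$ with $gI \cap Y_I$ non-degenerate actually stabilizes $Y_I$ — this is precisely the transverse-family property already noted in the definition. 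Then, given two arcs $J, J' \subseteq Y_I$, I would use the construction of $Y_I$ to cover each of $J, J'$ by finitely many $\Stab(Y_I)$-translates of $I$ with non-degenerate consecutive overlaps, and conclude that $J$ is covered by $\Stab(Y_I)$-translates of $J'$ with non-degenerate overlaps by concatenating the two coverings through $I$ — this requires passing through the proof of Lemma~\ref{lem:support}/\ref{lem:mixing} once more, now carefully tracking that all group elements produced lie in $\Stab(Y_I)$ rather than merely in $\free$, which follows because any translate of $I$ meeting $Y_I$ in a non-degenerate arc is a translate by an element of $\Stab(Y_I)$.

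For the second assertion, that $\mathcal Y = \{gY_I\}_{g\in\free}$ is a transverse covering: we already know $\mathcal Y$ is a transverse family, so it remains to show every arc $J \subseteq \StableTree$ is covered by finitely many elements of $\mathcal Y$. Here I would invoke Lemma~\ref{lem:mixing}: given $J$, there exist $g_0,\dots,g_k \in \free$ with $J \subseteq \bigcup_j g_j I$ and $g_j I \cap g_{j+1} I \neq \emptyset$ for all $j$. The subtlety is that the overlaps $g_j I \cap g_{j+1} I$ guaranteed by mixing need only be points, not non-degenerate arcs, so I cannot immediately say all the $g_j I$ lie in a single translate of $Y_I$. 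To fix this, I would refine the cover: subdivide $J$ into the finitely many maximal subarcs on which it stays inside one $g_j I$, and for each such subarc $J_\ell$, observe that $J_\ell$ itself (being a non-degenerate arc inside a single translate of $I$) is contained in $g_{j(\ell)} Y_I$ for the appropriate $j(\ell)$, since $J_\ell \subseteq g_{j(\ell)} I \subseteq g_{j(\ell)} Y_I$. Thus $J \subseteq \bigcup_\ell g_{j(\ell)} Y_I$, a finite union of elements of $\mathcal Y$, as required.

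The main obstacle I anticipate is the bookkeeping in the first part — verifying that the group elements witnessing the "mixing within $Y_I$" property all lie in $\Stab(Y_I)$, and hence that the indecomposability we establish is genuinely relative to $\Stab(Y_I)$ and not just to $\free$. This is where one must use that $Y_I$ is built up only from translates with non-degenerate overlaps: any $g \in \free$ with $gI$ meeting the interior of $Y_I$ in a non-degenerate segment satisfies $gY_I = Y_I$ (otherwise $gY_I$ and $Y_I$ would be distinct members of the transverse family $\mathcal Y$ overlapping in a non-degenerate arc, contradicting that $\mathcal Y$ is a transverse family), so in fact $g \in \Stab(Y_I)$. Once this observation is in place, both the indecomposability of $Y_I$ and the fact that $\mathcal Y$ is a transverse covering follow by essentially repeating the arguments of Lemmas~\ref{lem:support} and~\ref{lem:mixing}, now with all the relevant group elements recognized as stabilizer elements where needed.
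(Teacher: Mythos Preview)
Your approach to the transverse-covering statement is correct and matches the paper's: Lemma~\ref{lem:support} already covers any arc $J$ by finitely many translates of $I$, hence by finitely many translates of $Y_I$, and the point-versus-arc subtlety you raise is irrelevant for that part. Your observation that any $g\in\free$ with $gI\cap Y_I$ non-degenerate must lie in $\Stab(Y_I)$ is also correct and is exactly what is needed to upgrade $\free$-translates to $\Stab(Y_I)$-translates (the paper uses this implicitly).

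There is, however, a genuine gap in your indecomposability argument. You propose to cover $J$ by $\Stab(Y_I)$-translates of $J'$ by ``concatenating through $I$'', invoking the proofs of Lemmas~\ref{lem:support} and~\ref{lem:mixing}. But the definition of $Y_I$ only lets you cover arcs of $Y_I$ by translates of $I$ with non-degenerate overlap; to concatenate you also need the reverse direction --- covering $I$ by translates of an arbitrary leaf subsegment $J'$ with \emph{non-degenerate} overlaps --- and the proofs you cite do not provide this. In the proof of Lemma~\ref{lem:mixing} the consecutive overlaps are only shown to be non-empty (adjacent edges in the edge path of $J_t$ share a vertex), which is enough for mixing but not for indecomposability. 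The paper closes this gap with an additional idea: after replacing $J'$ by a suitable legal subsegment, choose $t$ large enough that both $I_t$ and $J_t'$ cross every $\free$-orbit of \emph{turns} taken by a leaf of $\lamination$, not merely every orbit of edges. Writing $I_t=e_0e_1\cdots e_k$, one can then choose $g_j$ so that $g_jJ_t'$ contains the two-edge path $e_je_{j+1}$; consecutive translates therefore share at least a full edge, and these non-degenerate overlaps persist under $f_{L,t}$ since both paths are legal. Without this turns argument (or something equivalent), your plan does not produce the non-degenerate overlaps that indecomposability demands.
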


\begin{proof}
  We first show that $Y_I$ is indecomposable.  Again the proof is
  similar to that of Lemmas \ref{lem:support} and \ref{lem:mixing}, so
  we will retain our notation from those proofs.  As before, it is
  enough to show that every arc $J\subseteq Y_I$ can be covered by
  finitely many translates with non-degenerate overlap of the fixed
  arc $I$, and conversely that $I$ can be covered by finitely many
  translates of $J$ with non-degenerate overlap.  The definition of
  $Y_I$ guarantees that $J$ can be covered by finitely many translates
  of $I$, so we are left to show the converse.

  First, replace $J$ by an appropriately chosen subinterval exactly as
  in the proof of Lemma \ref{lem:mixing}.  Now we run the proof of
  Lemma \ref{lem:support} with a minor modification.  Indeed, for
  $t\in [0,L)$, let $J_t$ and $I_t$ be as in that proof.  This time,
  choose $t$ large enough so that $I_t$ crosses every $\free$-orbit of
  turns taken by a leaf of $\lamination$.  By further enlarging $t$ if
  necessary, we may arrange that $J_t$ also crosses every turn taken
  by a leaf.  Write $I_t$ as an edge path $I_t=e_0e_1\ldots e_k$ in
  $S_t$, where the first and last edges may be partial edges.  Since
  $J_t$ crosses every $\free$-orbit of turns taken by a leaf in $S_t$,
  there exist $g_0,\ldots ,g_k\in \free$ so that for all
  $j\in \{0,\ldots,k-1\}$, $g_jJ_t$ crosses the edge path
  $e_je_{j+1}$.  Now we conclude exactly as before, using
  $\free$-equivariance of the maps $f_{L,t}$ to see that
  \begin{equation*}
    f_{L,t}(I_t)\subseteq g_0f_{L,t}(J_t)\cup
    g_1f_{L,t}(J_t)\cup\ldots\cup g_kf_{L,t}(J_t)
  \end{equation*}
  Since both $I_t$ and $J_t$ are legal, this set containment (and
  non-degeneracy of the overlaps) is unaffected by tightening and the
  proof is complete.

  To see that $\mathcal{Y}$ is a transverse covering we again
  reference the proof of Lemma \ref{lem:support}, which actually shows
  that every path in $\StableTree$ can be covered by finitely many
  trees in $\mathcal{Y}$.
\end{proof}

\begin{lemma}\label{lem:segments}
  Let $\beta$ be a generic leaf of $\lamination$ and let $J$ be a
  finite subsegment of a realization of $\beta$ in $\StableTree$.
  Then there exists $g\in\free$ which is contained in a conjugate of
  $\Stab(Y_I)$ and whose axis, $A_g$, in $\StableTree$ contains the
  segment $J$.
\end{lemma}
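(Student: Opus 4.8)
The plan is to use the transverse covering $\mathcal{Y} = \{gY_I\}_{g\in\free}$ from Lemma~\ref{lem:transverse-cover} together with the structure theory of trees with transverse coverings (the ``skeleton'' of Guirardel). Since $Y_I$ is indecomposable for the $\Stab(Y_I)$-action, it is in particular mixing, so any finite subsegment $J$ of a leaf realization sitting inside a translate of $Y_I$ can be crossed by arbitrarily long segments of leaves of $\lamination$; the content of the lemma is to promote this ``long leaf segment containing $J$'' to an honest \emph{axis} of a group element inside a conjugate of $\Stab(Y_I)$. First I would observe that $J$, being a finite subsegment of a realization of the bireccurrent generic leaf $\beta$, is contained in some translate $gY_I$; after replacing $g$ by the identity (conjugating the whole statement) we may assume $J\subseteq Y_I$. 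Now $\Stab(Y_I)$ acts on the indecomposable tree $Y_I$, and by the construction of $Y_I$ from leaf segments, the union of $\Stab(Y_I)$-translates of $I$ is all of $Y_I$ and these translates overlap non-degenerately in a connected way.

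The key step is then: because $\beta$ is bireccurrent, the segment $J$ reappears infinitely often along $\beta$, and one can find a subword of $\beta$ of the form $J \cdot u \cdot J$ whose realization in $\StableTree$ lies in $Y_I$ (using that leaf segments are legal and hence embedded, and Lemma~\ref{lem:support}/\ref{lem:transverse-cover} to keep everything inside finitely many translates of $I$ that assemble into $Y_I$). A standard recurrence/ping-pong argument — essentially the argument that bireccurrence of a leaf produces elements whose axes fellow-travel arbitrarily long leaf segments, as in \cite{BFH:Tits} — then yields $g\in\free$ with $g\cdot(\text{initial copy of }J) = (\text{terminal copy of }J)$ along $\beta$, so that the axis $A_g$ of $g$ in $\StableTree$ contains $J$. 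It remains to check $g\in\Stab(Y_I)$ (or a conjugate): this follows because $A_g$ meets $Y_I$ in the non-degenerate segment $J$, and since $\mathcal{Y}$ is a transverse family, an element moving $Y_I$ off itself can share at most a point with it; hence $g$ must preserve $Y_I$, i.e.\ $g\in\Stab(Y_I)$. Undoing the initial conjugation puts $g$ in a conjugate of $\Stab(Y_I)$, as claimed.

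The main obstacle I anticipate is the recurrence step: extracting a genuine group element $g$ (not just a longer and longer leaf segment) whose translation axis literally contains $J$, rather than merely fellow-travels a long leaf segment near $J$. The subtlety is that $\StableTree$ is an $\mathbb{R}$-tree in the boundary of outer space, so $g$ need not act loxodromically \emph{a priori}; one must use indecomposability of $Y_I$ to ensure that the relevant $\Stab(Y_I)$-elements act with nondegenerate axes on $Y_I$ (indecomposable trees have no global fixed point and the action is ``large''), and that the recurrence of $\beta$ can be realized by an element translating along the leaf. I would handle this by working in $S_t$ for $t$ large — where $I_t$ is legal, crosses every orbit of turns taken by leaves (as in Lemma~\ref{lem:transverse-cover}), and edge lengths are small — picking the group element witnessing a recurrence $J\cdot u\cdot J\subseteq\beta$ at finite stage, and then pushing forward under the equivariant maps $f_{L,t}$, using legality of leaf segments to guarantee the axis survives tightening in the limit. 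Once the finite-stage element is produced, equivariance and the transverse-family property do the rest.
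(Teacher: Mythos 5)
Your overall strategy is the same as the paper's: descend to a finite stage $S_t$ where $J_t$ is a legal leaf segment crossing all the relevant turns, use bireccurrence of $\beta$ to produce a group element $g$ that translates along the leaf, and push the picture forward to $\StableTree$ via $f_{L,t}$ using legality to survive tightening. You also correctly flag the main subtlety (extracting a genuine loxodromic element rather than just longer leaf segments), and the paper resolves it exactly as you sketch --- choosing $x_t,y_t$ in the same $\free$-orbit, taking $g$ with $gx_t=y_t$, and checking $d(x_t,g^2x_t)=2d(x_t,gx_t)$ so $x_t$ lies on $A_g$.

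However, the final step --- showing that $g$ lies in a conjugate of $\Stab(Y_I)$ --- has a real gap. You argue that $A_g \cap Y_I \supseteq J$ is non-degenerate and then invoke the transverse-family property to conclude $gY_I = Y_I$. But the transverse-family property only says that \emph{distinct} translates of $Y_I$ overlap in at most a point; to apply it you need $gY_I \cap Y_I$ to be non-degenerate, and $J \subseteq A_g \cap Y_I$ does not give you that. Indeed, by construction $J$ is a fundamental domain for $g$ acting on $A_g$, so $J \cap gJ$ is a single point, and you have no a priori reason to believe $gJ \subseteq Y_I$ (the axis $A_g$ coincides with the leaf $\beta$ only along $J\cdot u$, and $g(J\cdot u)$ is a translate of that path, not a segment of $\beta$). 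Without knowing $A_g \subseteq Y_I$, the conclusion does not follow. The paper closes this gap by an extra normalization of $g$: after arranging that the turn $\{e_0,\overline{e_k}\}$ is taken by a leaf, they postcompose $g$ with an element of $\Stab(y_t)$ so that the junction turn $\{\overline{e_k},g(e_0)\}$ is \emph{also} taken by a generic leaf. This forces every turn crossed by the entire axis $A_g$ in $S_t$ to be taken by a leaf, and then the covering argument from Lemma~\ref{lem:transverse-cover} shows $A_g$ is contained in a single translate of $Y_I$; only at that point does your transverse-family argument give $g\in h\Stab(Y_I)h^{-1}$. Your write-up should include this normalization of $g$ and the deduction that the whole axis, not just $J$, lies in a translate of $Y_I$.
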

\begin{proof}
  We retain our notation from above, so that $J_t$ is a segment in
  $S_t$ which maps to $J$ under $f_{L,t}$.  We will denote the
  realization of $\beta$ in $S_t$ by $\beta_t$.  Choose $t$ large
  enough so that $J_t$ crosses every turn taken by $\beta_t$, then
  lengthen $J_t$ by following the leaf to arrange that both endpoints
  of $J_t$ are vertices in the same $\free$-orbit.  Write $J_t$ as an
  edge path $J_t=e_0e_1\ldots e_k$.  If necessary, further lengthen
  $J_t$ (again following $\beta_t$) to arrange that the turn
  $\{e_0,\overline{e_k}\}$ is taken by a leaf.  Let $x_t$ (resp.\
  $y_t$) be the initial (resp.\ terminal) endpoint of $J_t$.

  Now let $g\in\free$ be a group element taking $x_t$ to $y_t$.  After
  postcomposing with an element of $\Stab(y_t)$ if necessary, we may
  assume that the turn $\{\overline{e_k}, g(e_0)\}$ is taken by a
  generic leaf of $\lamination$.  We claim that the axis of $g$ in
  $S_t$ crosses $J_t$.  Indeed, to get from $x_t$ to $y_t$, one
  traverses the edge path $e_0e_1\ldots e_k$.  Thus, to get from
  $y_t=g\cdot x_t$ to $g\cdot y_t=g^2\cdot x_t$, one traverses the
  same (up to $\free$-orbit) edge path.  As $e_0\neq \overline{e_k}$
  and $S_t$ is a tree, we have that
  $d(x_t,g^2\cdot x_t)=2 d(x_t,g\cdot x_t)$.  It is an elementary
  exercise to show that this is equivalent to $x$ being on the axis of
  $g$.  Both $\beta_t$ and the axis of $g$ are legal, so the
  restriction of $f_{L,t}$ to each is an immersion.  Thus, we can push
  this picture forward to the limit using $f_{L,t}$ to reach the
  desired conclusion.

  We've seen that any realization of $\beta$ in $\StableTree$ is
  contained in a single $\free$-translate of $Y_I$.  As we have
  arranged that every turn taken by the axis of $g$ in $S_t$ is also
  taken by a leaf, the argument given in the proof of Lemma
  \ref{lem:transverse-cover} allows us to conclude that $A_g$ is
  contained in a single $\free$-translate of $Y_I$.  Thus $g$ is
  contained in a conjugate of $\Stab(Y_I)$, as desired.
\end{proof}

For convenience of the reader, we recall two essential facts:
\begin{proposition}[{\cite[Proposition 4.3,
    4.27]{H:CyclicSBoundary}}]\label{prop:mix-z}
  If $T\in\cvclo$ is mixing, then $T$ is $\mathcal{Z}^{\m}$-averse if
  and only if $T$ is $\mathcal{Z}^{\m}$-incompatible.
\end{proposition}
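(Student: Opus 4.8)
I only sketch the shape of the argument, since the statement is quoted from \cite{H:CyclicSBoundary}. The forward implication needs no work and does not use mixing: a $\mathcal{Z}^{\m}$-splitting compatible with $T$ is, on its own, a finite chain of compatibilities from $T$ to a $\mathcal{Z}^{\m}$-splitting, so if $T$ is $\mathcal{Z}^{\m}$-averse there can be no such splitting, i.e.\ $T$ is $\mathcal{Z}^{\m}$-incompatible. The substance lies entirely in the reverse implication, which I would establish in contrapositive form: assuming $T$ is mixing and \emph{not} $\mathcal{Z}^{\m}$-averse, the task is to produce a $\mathcal{Z}^{\m}$-splitting compatible with $T$ itself.

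The plan is to fix a chain of compatible trees $T = T_0, T_1, \dots, T_k = S$ in $\cvclo$ ending at a $\mathcal{Z}^{\m}$-splitting $S$, with $k$ minimal over all such chains and all choices of target splitting, and to show $k \le 1$. This is done in two moves, which together make up the content of \cite[Propositions~4.3 and~4.27]{H:CyclicSBoundary}: (i) a \emph{shortening step}, showing that a mixing tree which admits \emph{any} finite chain of compatibilities to a $\mathcal{Z}^{\m}$-splitting in fact admits one of length at most two --- that is, it has a reducing splitting in the sense defined above; and (ii) a \emph{compression step}, showing that a mixing tree which has a reducing splitting is already compatible with some $\mathcal{Z}^{\m}$-splitting. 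Step (i) bounds the minimal $k$ by two, and step (ii) then rules out $k = 2$, leaving $k \le 1$, which is exactly what we want.

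The engine for both steps is the passage to standard common refinements in the sense of Guirardel--Levitt. For step (ii): given a reducing splitting, realized by a chain $T \sim T' \sim S$ with $S$ a $\mathcal{Z}^{\m}$-splitting, first refine $T'$ and $S$ to a common refinement $\widehat{T'}$; collapsing $\widehat{T'}$ onto $S$ kills all but one orbit of edges, and the surviving edge $e$ has stabilizer in $\mathcal{Z}^{\m}$ (maximal cyclic in the $\mathcal{Z}^{\mathrm{max}}$ case). Then, using that $T$ is compatible with $T'$ and hence --- after refining --- with $\widehat{T'}$, form a common refinement $R$ of $T$ and $\widehat{T'}$ and collapse in $R$ everything except the orbit of $e$ together with the part of $R$ that witnesses $T$. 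The crux, which I expect to be the main obstacle, is to use the mixing of $T$ to control this last collapse: mixing is what prevents the extra edge $e$ from being absorbed into $T$ under the collapse onto $T$, so that the quotient $S'$ is genuinely a one-edge splitting, has edge stabilizer a conjugate of $\Stab(e) \in \mathcal{Z}^{\m}$, and still collapses onto $T$; hence $S'$ is a $\mathcal{Z}^{\m}$-splitting compatible with $T$. This bookkeeping --- keeping the surviving edge a single orbit, keeping its stabilizer inside $\mathcal{Z}^{\m}$ (respecting maximality when relevant), and handling the fact that $T$ and $T'$ need not be simplicial --- is what makes the argument genuinely technical, and it is run through Guirardel's theory of transverse coverings and of compatible pairs of trees \cite{G:LimitGroups, G:Actions, GL:RelativeOuterSpace}, the same circle of ideas already used in Lemmas~\ref{lem:support}--\ref{lem:segments}. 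Step (i) is carried out in the same spirit, by an inductive reduction of the chain length. The full details are in \cite[\S4]{H:CyclicSBoundary}.
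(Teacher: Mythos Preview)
The paper does not prove this proposition; it is stated as a citation from \cite[Propositions~4.3 and~4.27]{H:CyclicSBoundary} and used as a black box. Your decision to sketch the shape of Horbez's argument rather than reprove it matches the paper's treatment, and the outline you give (forward direction trivial; reverse direction via reducing splittings and common refinements, with mixing used to control the collapse) is a fair summary of how the cited propositions are organized.
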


\begin{lemma}[{\cite[Lemma 1.18]{G:Actions}}]\label{lem:minimal-subtree}
  Let $T\in\cvclo$ be compatible with a $\mathcal{Z}^{\m}$-splitting,
  $S$.  Let $H\subseteq \free$ be a subgroup, such that the
  $H$-minimal subtree $T_H$ of $T$ is indecomposable.  Then $H$ is
  elliptic in $S$.
\end{lemma}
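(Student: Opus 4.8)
The plan is to use the compatibility of $T$ and $S$ to equip $T$ with a transverse covering all of whose pieces have stabilizers that are elliptic in $S$, and then to use indecomposability of $T_H$ to force $T_H$ inside a single such piece; once that is done, $H$ must stabilize the piece, hence be elliptic in $S$.

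First I would produce the transverse covering. Since $T$ and $S$ are compatible, fix an $\free$-tree $\hat T$ together with collapse maps $p\colon \hat T\to T$ and $q\colon\hat T\to S$. As $q$ is obtained by collapsing each component of a subforest of $\hat T$, the preimage $Y_v:=q^{-1}(v)$ of a vertex $v$ of $S$ is a (possibly degenerate) subtree with $\Stab_{\free}(Y_v)=\Stab_S(v)$, distinct $Y_v$'s are disjoint, and the open edges of $\hat T$ not in the collapsed forest map homeomorphically onto the edges of $S$, an edge $\tilde e$ having $\Stab_{\free}(\tilde e)=\Stab_S(q(\tilde e))\in\mathcal Z^{\m}$. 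Thus $\{Y_v\}\cup\{\tilde e\}$ is a transverse covering of $\hat T$, and I would push it forward: let $\mathcal V$ consist of the non-degenerate trees among $\{p(Y_v)\}\cup\{p(\tilde e)\}$. One checks $\mathcal V$ is a transverse covering of $T$: if $Z,Z'$ are disjoint subtrees of $\hat T$ then $p(Z)$ and $p(Z')$ meet in at most one point, because a common point of the images corresponds to a component of the forest collapsed by $p$ that meets both $Z$ and $Z'$, and at most one such component exists (two would each have to contain the bridge between $Z$ and $Z'$). This same remark shows $\Stab_{\free}(p(Y_v))=\Stab_S(v)$ and $\Stab_{\free}(p(\tilde e))=\Stab_S(q(\tilde e))$ whenever these images are non-degenerate. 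Hence every piece of $\mathcal V$ has stabilizer a vertex or edge stabilizer of $S$, so it is elliptic in $S$.

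Next I would restrict to $T_H$: the family $\mathcal V_H:=\{\,V\cap T_H : V\in\mathcal V,\ V\cap T_H\text{ non-degenerate}\,\}$ is an $H$-invariant transverse covering of the (non-degenerate) tree $T_H$, and I claim $\mathcal V_H=\{T_H\}$. If not, mixing of $T_H$ forces all pieces of $\mathcal V_H$ into a single $H$-orbit $\{hW_0\}_{h\in H}$ with $W_0\subsetneq T_H$ (two pieces in distinct orbits could not be chained by translates of one another, since distinct pieces meet only in points). Pick $z\in W_0$ also lying in a piece $hW_0\ne W_0$, a non-degenerate arc $I\subseteq W_0$ with endpoint $z$, and a non-degenerate arc $J\subseteq T_H$ meeting the interiors of both $W_0$ and $hW_0$. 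Indecomposability gives $J\subseteq g_0I\cup\cdots\cup g_kI$ with $g_jI\cap g_{j+1}I$ non-degenerate for $0\le j<k$. Since the interior of a piece meets no other piece, the subarc of $J$ in $W_0$ is covered only by $g_jI$ with $g_jW_0=W_0$, and the subarc in $hW_0$ only by $g_jI$ with $g_jW_0=hW_0$; hence some consecutive $g_{j_0},g_{j_0+1}$ satisfy $g_{j_0}W_0=W_0$ and $g_{j_0+1}W_0\ne W_0$, whence $g_{j_0}I\cap g_{j_0+1}I\subseteq W_0\cap g_{j_0+1}W_0$ is a single point, contradicting non-degeneracy of the overlap. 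So $\mathcal V_H=\{T_H\}$, i.e.\ $T_H\subseteq V$ for a single $V\in\mathcal V$. (Alternatively, one may quote that an indecomposable tree lies in a single piece of any of its transverse coverings.)

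Finally, for every $h\in H$ we have $T_H=hT_H\subseteq hV$ and $T_H\subseteq V$, so $hV\cap V\supseteq T_H$ is non-degenerate; as $\mathcal V$ is a transverse family this forces $hV=V$. Thus $H\subseteq\Stab_{\free}(V)$, which is elliptic in $S$ by the first step, so $H$ is elliptic in $S$. I expect the main obstacle to be the bookkeeping in the first step — verifying that pushing the obvious transverse covering of $\hat T$ forward along the collapse $p$ again yields a transverse covering of $T$ with controlled piece-stabilizers — together with the triviality argument for $\mathcal V_H$, which is the one place where the strengthening from mixing to indecomposability, namely the non-degeneracy of overlaps, is genuinely used.
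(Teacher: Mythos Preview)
The paper does not give its own proof of this lemma; it is simply quoted from Guirardel \cite[Lemma~1.18]{G:Actions}, so there is nothing in the paper to compare your argument against directly.

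That said, your approach is exactly the one underlying Guirardel's result: use the common refinement $\hat T$ to produce a graph-of-actions decomposition of $T$ (equivalently, a transverse covering whose pieces have stabilizers equal to vertex or edge groups of $S$), and then invoke the fact that an indecomposable subtree must lie in a single piece of any transverse covering. Your argument is correct. Two small comments. First, in the step where you push the covering forward along $p$, your phrasing suggests $\hat T$ is simplicial with edges mapping homeomorphically to edges of $S$; in general $\hat T$ need not be simplicial, but the standard construction (taking $\hat T$ with length function $\ell_T+\ell_S$) still yields the transverse covering you want, with the preimages $q^{-1}(e)$ being arcs. Second, your contradiction in the triviality argument for $\mathcal V_H$ can be shortened: since consecutive translates $g_jI,\,g_{j+1}I$ overlap non-degenerately, the pieces $g_jW_0$ and $g_{j+1}W_0$ coincide for all $j$, so all $g_jW_0$ are equal to a single piece containing $J$, which then meets the interiors of both $W_0$ and $hW_0$ --- impossible. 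This avoids the implicit reordering of the $g_j$'s.
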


\begin{proposition}\label{prop:T-Z-averse}
  $\StableTree$ is $\mathcal{Z}^{\m}$-averse.
\end{proposition}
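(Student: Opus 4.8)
The plan is to combine the structural results we have just established with the two recalled facts (Proposition~\ref{prop:mix-z} and Lemma~\ref{lem:minimal-subtree}). By Lemma~\ref{lem:mixing}, $\StableTree$ is mixing, so Proposition~\ref{prop:mix-z} reduces the claim to showing that $\StableTree$ is $\mathcal{Z}^{\m}$-incompatible; that is, no $\mathcal{Z}^{\m}$-splitting $S$ is compatible with $\StableTree$. So I would argue by contradiction: suppose $S$ is a one-edge $\mathcal{Z}^{\m}$-splitting compatible with $\StableTree$, and derive a contradiction with the hypothesis that $\lamination$ is $\mathcal{Z}^{\m}$-filling.

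First I would invoke Lemma~\ref{lem:transverse-cover}: $Y_I$ is indecomposable with respect to the $\Stab(Y_I)$-action. Writing $H = \Stab(Y_I)$, the $H$-minimal subtree of $\StableTree$ is exactly $Y_I$ (or contains an indecomposable subtree on which $H$ acts), so Lemma~\ref{lem:minimal-subtree} applies and gives that $H$ is elliptic in $S$ — i.e., $H$ fixes a vertex of $S$, hence $H$ is contained in a vertex group $G_w$ of $S$. More generally, every conjugate of $H$ is contained in a (conjugate) vertex group of $S$. Now I would bring in Lemma~\ref{lem:segments}: for a generic leaf $\beta$ of $\lamination$ and any finite subsegment $J$ of a realization of $\beta$ in $\StableTree$, there is $g \in \free$ lying in a conjugate of $H$ whose axis $A_g$ contains $J$. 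Since that conjugate of $H$ is inside a vertex group of $S$, the element $g$ is elliptic in $S$ — but $g$ has infinite order (its axis in $\StableTree$ is nondegenerate), so it fixes a vertex of $S$ and therefore lies in a vertex group $G_{w_J}$ of $S$.

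The key point to extract is that the whole generic leaf $\beta$ is carried by a single vertex group of $S$. Here I would use the finitely-generated criterion for carrying recalled at the end of \S\ref{sec:lam}: a line $\beta$ is carried by a finitely generated subgroup $A$ iff every finite subsegment of $\beta$ can be immersed into the core graph $G_A$. Vertex groups of one-edge $\mathcal{Z}^{\m}$-splittings are finitely generated (they are vertex groups of a graph-of-groups decomposition of $\free$ with one edge). The delicate part — and what I expect to be the main obstacle — is the bookkeeping needed to conclude that \emph{one fixed} vertex group works for \emph{all} subsegments of $\beta$ simultaneously, rather than a possibly-varying $G_{w_J}$ depending on $J$. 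To handle this I would take an exhaustion $J_1 \subset J_2 \subset \cdots$ of a realization of $\beta$ by finite subsegments with $\bigcup J_n = \beta$, get elements $g_n$ with axes through $J_n$ lying in vertex groups, and use that $S$ has finitely many $\free$-orbits of vertices together with the fact that larger segments force the vertex groups to be ``nested/compatible'' — concretely, if two elliptic elements $g_m, g_n$ have overlapping axes in $\StableTree$ of length exceeding the (bounded) overlap one could have between distinct translates of $Y_I$, then they must fix the same vertex of $S$; alternatively, and more cleanly, since all the $g_n$ lie in conjugates of the single group $H$ and $H$ is elliptic, one pins down a single vertex $w$ of $S$ fixed by a cofinal subfamily. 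Once $\beta$ is carried by a vertex group of $S$, the lamination $\lamination = \overline{\beta}$ is carried by that vertex group (by definition of a generic leaf carrying a lamination), contradicting that $\lamination$ is $\mathcal{Z}^{\m}$-filling. This contradiction shows $\StableTree$ is $\mathcal{Z}^{\m}$-incompatible, and hence, being mixing, $\mathcal{Z}^{\m}$-averse. \qed
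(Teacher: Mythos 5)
Your high-level skeleton matches the paper's: reduce $\mathcal{Z}^{\m}$-aversity to $\mathcal{Z}^{\m}$-incompatibility via mixing and Proposition~\ref{prop:mix-z}; apply Lemma~\ref{lem:minimal-subtree} to the indecomposable $H$-tree $Y_I$ (with $H=\Stab(Y_I)$) to put $H$ inside a vertex group $A$ of the hypothetical compatible splitting $S$; invoke Lemma~\ref{lem:segments}; and conclude $\lamination$ is carried by $A$, contradicting $\mathcal{Z}^{\m}$-filling. But the step you flag as ``the main obstacle'' is not where the real difficulty lies, and the step where the real difficulty does lie is passed over.

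The uniformity-over-segments issue is in fact easy, and your proposed fixes are the wrong shape: as the proof of Lemma~\ref{lem:segments} already notes, the whole realization of $\beta$ in $\StableTree$ lies in a single $\free$-translate $g_0 Y_I$, distinct translates of $Y_I$ meet in at most a point, and each element $g$ produced has axis contained in a single translate. So all the $g$'s lie in the single conjugate $g_0 H g_0^{-1}$, hence in a single vertex group; there is no nesting or cofinality argument needed (and your appeal to a ``bounded overlap between distinct translates of $Y_I$'' is off --- the overlap is at most a point, not a bounded segment).

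The genuine gap is the transfer from $\StableTree$ back to $\free$. Carrying is defined via realizations in $\mathcal{R}$ (equivalently, in the universal cover $\free$): you need every finite subsegment of the realization $\beta_\free$ in $\free$ to immerse into $G_A$. Lemma~\ref{lem:segments} hands you an element $g$ whose axis \emph{in $\StableTree$} contains a subsegment of $\beta_L$. That does not by itself say anything about where the corresponding segment $J_\free \subset \free$ sits relative to the axis of $g$ \emph{in $\free$} --- and an equivariant map $h\colon\free\to\StableTree$ is highly non-injective, so axes and leaves in $\StableTree$ do not lift naively. The paper bridges exactly this gap with a coloring argument using bounded backtracking of $h$ (via \cite{BFH:Laminations}): one colors $\beta_L$ and the $\StableTree$-axis of $g$, pulls the colorings back through $h$ with BBT controlling the slop, and deduces that $J_\free$ itself lies on the $\free$-axis of $g$, hence in the $H$-minimal subtree of $\free$, hence projects into $G_H$, and then (postcomposing $G_H \to G_A$, using that $A$ is finitely generated) into $G_A$. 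Without something playing the role of this BBT/coloring step, your proposal does not establish that $\beta$ is carried by $A$, and the contradiction is not reached.
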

\begin{proof}
  We assume that $\StableTree$ is not $\mathcal{Z}^{\m}$-averse and
  argue towards a contradiction.  Indeed, as $\StableTree$ is mixing,
  Proposition \ref{prop:mix-z} implies that it is compatible with a
  $\mathcal{Z}^{\m}$-splitting $S$.  Now let $H=\Stab(Y_I)$.  If
  $Y_I=\StableTree$, then $H=\free$ and Lemma
  \ref{lem:minimal-subtree} gives that $\free$ is elliptic in $S$, a
  contradiction as $S$ is a nontrivial minimal splitting.

  The other possibility is that $Y_I$ is a proper subtree in
  $\StableTree$, and in this situation we argue that $\lamination$ is
  carried by a vertex group of $S$.  As above, we apply Lemma
  \ref{lem:minimal-subtree} to conclude that $H=\Stab(Y_I)$ is carried
  by a vertex group $A$ of the splitting $S$.  We have a tower of
  covers corresponding to subgroups as follows (we temporarily blur
  the distinction between $\free$ and the universal cover of
  $\mathcal{R}$)
  \begin{equation*}
    \free\overset{\pi_{H,\free}}{\longrightarrow}
    X_H\overset{\pi_{A,H}}{\longrightarrow}
    X_A\overset{\pi_{\mathcal{R},A}}{\longrightarrow}
    \mathcal{R}
  \end{equation*}
  We denote by $G_A$ and $G_H$ the core of the corresponding covers.

  Let $\beta$ be a generic leaf of $\lamination$.  Even though $H$ may
  not be finitely generated, we claim it is enough to show that every
  finite subsegment of $\beta$ can be immersed into $G_H$.  Indeed, by
  postcomposing these immersions with $\pi_{A,H}$ (also an immersion),
  we see that every finite subpath of $\beta$ can then be immersed
  into $G_A$.  Since $A$ is finitely generated, we conclude that
  $\beta$ can be immersed into $G_A$, and therefore that $\lamination$
  is carried by a vertex group of the cyclic splitting $S$.
  
  Let $h\colon \free\to \StableTree$ be an $\free$-equivariant map
  which is linear on edges and Lipschitz (it's easy to see that such
  maps exist).  Lemma 3.1 of \cite{BFH:Laminations} gives that
  $\BBT(h)$ is finite.  Color the line $\beta_L$ in $\StableTree$ red
  and let $\beta_\free$ be the realization of $\beta$ in $\free$.
  Pull back the coloring via $h$ to $\beta_\free$ as follows (keeping
  in mind the bounded cancellation): if $x\in\beta_\free$ is such that
  $h(x)$ is red, then color $x$ red, otherwise do not color $x$.  It's
  clear that both ends of $\beta_\free$ have red segments.

  Let $J_\free$ be a subsegment of $\beta_\free$.  Extend $J_\free$
  along $\beta_\free$ if necessary to ensure that both endpoints of
  $J_\free$ are red.  Define $J=h_\#(J_\free)$.  The fact that the
  endpoints of $J_\free$ are red ensures that $J$ is a subsegment of
  $\beta_L$.  Apply Lemma \ref{lem:segments} to obtain an element
  $g\in H$ whose axis contains $J$.  Color the axis of $g$ in
  $\StableTree$ blue.  Pull back this coloring to the axis of $g$ in
  $\free$ exactly as above.  Equivariance of $h$, coupled with the
  fact that $g$ is not elliptic in $\free$ or $\StableTree$, implies
  that every subray of the axis of $g$ in $\free$ contains blue
  points.  In particular, there are blue points on either side of
  $J_\free$.  Thus the axis of $g$ in $\free$ contains the prescribed
  segment $J_\free$.  It's now evident that $J_\free$ is contained in
  the $H$-minimal subtree of $\free$.  This implies that
  $\pi_{H,\free}(J_\free)$ is contained in the core $G_H$ of the
  cover, completing the proof.
\end{proof}

\section{Filling but not $\mathcal{Z}^{\m}$-filling laminations}
\label{sec:filling not Z-filling}
In this section, we study filling laminations which are not
$\mathcal{Z}^{\m}$-filling.  We then use this understanding to
establish the following proposition, which is a restatement of the
second claim in Theorem~\ref{thm:loxodromics}.  This section concludes
with a proof of the first statement in Theorem~\ref{thm:loxodromics}.

\begin{proposition}\label{prop:v-group}
  Let $\phi$ be an automorphism with a filling lamination
  $\lamination$ that is not $\mathcal{Z}^{\m}$-filling, so that
  $\lamination$ is carried by a vertex group of a (maximally-) cyclic
  splitting $S$. Then there is a (maximally-) cyclic splitting $S'$
  that is fixed by a power of $\phi$.
\end{proposition}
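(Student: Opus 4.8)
The plan is to attach to $\lamination$ a finite, $\phi$-invariant family of $\mathcal{Z}^{\m}$-splittings and let a power of $\phi$ fix a member of it; the first task is to normalize $S$. Since $\lamination$ is carried by the vertex group $V$ of the one-edge $\mathcal{Z}^{\m}$-splitting $S$ and $\lamination$ is filling, $V$ is not a free factor of $\free$; hence $S$ is not a free splitting and its edge group is an infinite cyclic subgroup $\la w\ra$. By the Shenitzer--Stallings--Swarup description recalled in \S\ref{subsec:CyclicS}, $S$ is obtained from a one-edge free splitting $\widehat S = A \ast B$ by the equivariant edge fold corresponding to $\la w\ra$, where $w$ lies in a vertex group of $\widehat S$, say $w \in B$; unwinding this fold shows that the vertex group of $S$ on the side of the folded edge is $A \ast \la w\ra$. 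Because $\lamination$ is filling it is carried by neither of the free factors $A$ and $B$, so it is carried by $V = A \ast \la w\ra$ and genuinely involves $w$.

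The main step is a finiteness statement. Let $\mathcal{S}(\lamination)$ be the set of one-edge $\mathcal{Z}^{\m}$-splittings of $\free$ admitting a vertex group that carries $\lamination$ and is minimal with that property among vertex groups of one-edge $\mathcal{Z}^{\m}$-splittings. An automorphism carries one-edge $\mathcal{Z}^{\m}$-splittings to one-edge $\mathcal{Z}^{\m}$-splittings and transforms vertex groups accordingly, so since $\lamination$ is $\phi$-invariant the action of $\phi$ on the vertex set of $\CyclicS^{\m}$ preserves $\mathcal{S}(\lamination)$; and $S$ witnesses $\mathcal{S}(\lamination) \neq \emptyset$. To see that $\mathcal{S}(\lamination)$ is finite I would use that $\phi$ is fully irreducible relative to a maximal (hence non-exceptional) $\phi$-invariant free factor system $\ffa$, together with the relative train track representative $f \colon G \to G$ from \S\ref{sec:folding-seq}: maximality of $\ffa$ forces any $V = A \ast \la w\ra$ as above to be compatible with $\ffa$, meaning that the free factor $A$ and the element $w$ are both carried by $\ffa$, and over a fixed free factor system only finitely many one-edge $\mathcal{Z}^{\m}$-splittings can have their non-free vertex group carry $\lamination$ --- the geometric (surface-with-boundary) model of the top EG stratum $H_r$, together with bounded backtracking, constrains $w$ to finitely many conjugacy classes. (An alternative, softer route is to show that the set of \emph{all} one-edge $\mathcal{Z}^{\m}$-splittings with a vertex group carrying $\lamination$ has bounded diameter in $\CyclicS^{\m}$, so that $\la\phi\ra$ has bounded orbits there and, by Theorem~\ref{thm:hyp-isom}, a power of $\phi$ is elliptic.)

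Granting finiteness of $\mathcal{S}(\lamination)$, the cyclic group $\la\phi\ra$ permutes a finite set, so $S'\phi^m$ is equivalent to $S'$ for some $m \geq 1$ and some $S' \in \mathcal{S}(\lamination)$, and the equivalence class of $S'$ is the desired point of $\CyclicS^{\m}$ fixed by $\phi^m$. Choosing $S'$ in the normal form of the first paragraph and recording the resulting $\phi^m$-periodic conjugacy classes $[A]$ and $[w]$ then gives, after a further bounded power, the sharper statement needed later that $\phi$ commutes with the partial conjugation determined by $w$ (Proposition~\ref{prop:converse}).

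I expect the real difficulty to lie entirely in the finiteness of $\mathcal{S}(\lamination)$: one must rule out that a filling-but-not-$\mathcal{Z}^{\m}$-filling lamination is carried by vertex groups of infinitely many essentially different one-edge $\mathcal{Z}^{\m}$-splittings, which comes down to pinning down the folded element $w$ up to finitely many conjugacy classes, most naturally from the fact that the lamination of the geometric stratum $H_r$ determines its peripheral curves.
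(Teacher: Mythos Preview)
Your outline has the right overall shape---build a finite $\phi$-invariant set of $\mathcal{Z}^{\m}$-splittings and pass to a power---but the finiteness step, which you correctly flag as the crux, is not established by either of the routes you sketch.

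The direct route breaks down at the sentence ``maximality of $\ffa$ forces any $V = A \ast \la w\ra$ as above to be compatible with $\ffa$.'' The free factor system $\ffa$ is the maximal \emph{$\phi$-invariant} one, while the free factor $A$ arising from the Shenitzer--Stallings--Swarup unfolding of $S$ has no a priori relation to $\phi$; there is no mechanism forcing $A$ (or $w$) to be carried by $\ffa$. Likewise, the appeal to a ``geometric (surface-with-boundary) model of the top EG stratum $H_r$'' is unjustified: nothing in the hypotheses makes $H_r$ geometric, and even when it is, you have not said how bounded backtracking would pin down $w$ to finitely many conjugacy classes. Your definition of $\mathcal{S}(\lamination)$ via ``minimal'' vertex groups is also not obviously well-posed (minimal for which partial order?), and in any case minimality does not by itself yield finiteness. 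The alternative ``softer'' route fails for a different reason: bounded diameter of the relevant set would give bounded $\la\phi\ra$-orbits in $\CyclicS^{\m}$, but Theorem~\ref{thm:hyp-isom} only tells you the action is then \emph{bounded}, not that it has a fixed point; in a non-proper hyperbolic complex a bounded cyclic action need not fix a vertex.

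The paper's argument avoids these issues entirely by working not with an ad hoc minimal class but with the JSJ deformation space for the family $\mathfrak{S}$ of all one-edge $\mathcal{Z}^{\m}$-splittings in which $\lamination$ is elliptic. One first checks (using that $\lamination$ is filling) that any two splittings in $\mathfrak{S}$ form a hyperbolic-hyperbolic or elliptic-elliptic pair, so the Fujiwara--Papasoglu JSJ machinery applies and the resulting JSJ deformation space $\mathcal{D}$ is nonempty. Because edge groups are cyclic, $\mathcal{D}$ is non-ascending, so by Guirardel--Levitt any two reduced trees in $\mathcal{D}$ differ by a finite sequence of slides; a short combinatorial argument then shows only finitely many slide sequences are possible, hence only finitely many reduced trees. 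This finite set is canonically attached to $\lamination$ and therefore $\phi$-invariant, yielding the fixed splitting after passing to a power. The point is that JSJ theory supplies both existence and finiteness simultaneously, sidestepping any attempt to enumerate the possible $w$'s directly.
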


The splitting $S'$ is canonical in the sense that the vertex group
which carries $\lamination$ is as small as possible.  The proof of
Proposition \ref{prop:v-group} will require an excursion into the
theory of JSJ-decompositions; the reader is referred to \cite{FP:JSJ}
for details about JSJ theory.

We say a lamination is \emph{elliptic} in an $\free$-tree $T$ if it is
is carried by a vertex stabilizer of $T$. Let $\mathfrak{S}$ be the
set of all one-edge $\mathcal{Z}^{\m}$-splittings in which the
lamination $\lamination$ is elliptic. Since $\lamination$ is filling,
the set $\mathfrak{S}$ does not contain any free splittings.

\begin{definition}[Types of pairs of splittings \cite{RipsSela}]
  Let $S = A \ast_{C} B$ (or $A\ast_{C}$) and $S'=A'\ast_{C'}B'$ (or
  $A'\ast_{C'}$) be one-edge cyclic splittings with corresponding
  Bass-Serre trees $T$ and $T'$. We say $S$ is \emph{hyperbolic} with
  respect to $S'$ if there is an element $c \in C$ that acts
  hyperbolically on $T'$. We say $S$ is \emph{elliptic} with respect
  to $S'$ if $C$ is fixes a point of $T'$. We say this pair is
  \emph{hyperbolic-hyperbolic} if each splitting is hyperbolic with
  respect to the other. We define elliptic-elliptic,
  hyperbolic-elliptic and elliptic-hyperbolic splittings similarly.
\end{definition}

\begin{lemma}\label{lem:Lamination-Elliptic}
  With notation as above, suppose that $S,S'\in\mathfrak{S}$, and
  assume without loss that $\lamination$ is carried by the vertex
  groups $A$ and $A'$. Then $\lamination$ is elliptic in the minimal
  subtree of $A$ in $T'$, denoted $T'_{A}$ and in the minimal subtree
  of $A'$ in $T$, denoted $T_{A'}$.
\end{lemma}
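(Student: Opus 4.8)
The plan is to understand how $\lamination$ sits inside the two splittings simultaneously by combining its ellipticity in each with a transversality-type argument. Fix $S, S' \in \mathfrak{S}$ with Bass--Serre trees $T, T'$, and suppose $\lamination$ is carried by the vertex group $A$ of $S$ (with $A$ fixing the vertex $v \in T$) and by the vertex group $A'$ of $S'$ (with $A'$ fixing $v' \in T'$). I want to show $\lamination$ is carried by a vertex group of $T'_A$, the $A$-minimal subtree of $T'$, and symmetrically for $T_{A'}$.

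First I would observe that $T'_A$ comes with an induced graph-of-groups decomposition of $A$: restricting the $S'$-splitting to the subgroup $A$ (via Bass--Serre theory applied to $A \curvearrowright T'_A$) expresses $A$ either as elliptic in $T'$ (if $A$ fixes a point of $T'$, in which case $T'_A$ is a point and $A$ is already contained in a vertex group of $S'$, so the conclusion is immediate and in fact $\lamination$ is carried by a vertex group of $S'$ that lies inside $A$) or as a nontrivial graph of groups with cyclic (resp. maximal-cyclic) edge groups, since edge groups of $T'_A$ are intersections of $A$ with conjugates of the edge group $C'$ of $S'$. The key point to extract is: a generic leaf $\beta$ of $\lamination$ is carried by $A$, and I want to locate it inside one of the vertex groups of this induced decomposition of $A$. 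This is where the hypothesis that $\lamination$ is an attracting lamination — hence its generic leaves are birecurrent and filling in an appropriate sense — should be used, together with the fact that $\lamination$ is $\phi$-invariant.

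The main step, and the one I expect to be the main obstacle, is ruling out that a generic leaf of $\lamination$ is "hyperbolic" in $T'_A$, i.e. that its realization crosses infinitely many edges of $T'_A$. The natural approach: if $\beta$ were carried by $A$ but not by any vertex group of the induced decomposition of $A$ in $T'$, then $\beta$ would cross edges of $T'$ whose stabilizers are cyclic. One then argues that a birecurrent leaf crossing such an edge infinitely often forces the cyclic edge group's generator $c'$ to be "readable" along $\beta$ in a way incompatible with $\lamination$ being carried by $A$ unless $c'$ (or a conjugate) lies in $A$ — but then $\langle c' \rangle$ is an edge group inside $A$, and collapsing appropriately or passing to the component of $T'_A$ still gives a vertex group of $T'_A$ carrying $\beta$. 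I would formalize this using the immersion characterization from \S\ref{sec:lam}: $\beta$ is carried by a finitely generated subgroup iff every finite subsegment immerses into the corresponding core graph, applied to the vertex groups of $T'_A$ (which are finitely generated as they are vertex groups of a splitting of the finitely generated group $A$). A leaf that immerses into $G_A$ but into no vertex-group core must repeatedly read the edge group letter, and birecurrence then forces that letter into $\partial A$-compatible position; the combinatorial bookkeeping here is the delicate part.

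Finally, once $\beta$ is shown to be carried by a vertex group of $T'_A$, that vertex group is by construction $A \cap (\text{conjugate of } A')$ up to conjugacy, hence a subgroup fixing a point of $T'$; so $\lamination$ is elliptic in $T'_A$ as claimed, and symmetrically in $T_{A'}$. I would write the argument once and note that the roles of $S$ and $S'$ are interchangeable. I expect this lemma to feed directly into a ping-pong / accessibility argument (reminiscent of the hyperbolic-vs-elliptic dichotomy of Rips--Sela) in the proof of Proposition~\ref{prop:v-group}, so keeping track of whether the pair $(S,S')$ is elliptic-elliptic will be the payoff.
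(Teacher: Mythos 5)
Your proposal identifies the right structural fact---the vertex stabilizers of $T'_A$ are precisely the intersections $A\cap gA'g^{-1}$---but it misses the simple observation that makes the lemma immediate, and instead substitutes a roundabout contradiction argument whose key step you acknowledge is not carried out.

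The paper's proof is two sentences. Since $A$ and $A'$ both carry $\lamination$, the intersection $A\cap A'$ (after conjugating $A'$ suitably) also carries $\lamination$: a lift $\tilde\beta$ of a generic leaf with endpoints in $\partial A$ has some translate $g\tilde\beta$ with endpoints in $\partial A'$, so $\tilde\beta$ itself has endpoints in $\partial A\cap \partial(g^{-1}A'g)=\partial(A\cap g^{-1}A'g)$. Since the vertex stabilizers of $T_{A'}$ are exactly the conjugates of $A\cap A'$, the lemma follows from Bass--Serre theory with no further work. There is nothing to rule out: one directly \emph{exhibits} a vertex group of $T_{A'}$ carrying $\lamination$.

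Your approach, by contrast, tries to rule out that the leaf is hyperbolic in $T'_A$ by a birecurrence/readability argument about the cyclic edge group's generator. This is backwards, and the step you flag as ``the delicate part'' is a genuine gap: you give no argument that a birecurrent leaf crossing cyclic-stabilizer edges infinitely often forces the generator to appear periodically along the leaf in a contradictory way, and it is far from clear that the naive version of this claim is even true without controlling the geometry carefully. The direct route---that the intersection of the two carrying subgroups itself carries the lamination, and that intersection is a vertex group of $T_{A'}$---sidesteps all of this.
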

\begin{proof}
  Since $A$ and $A'$ both carry $\lamination$, their intersection
  $A \cap A'$ also carries
  $\lamination$. 
  The vertex stabilizers of $T_{A'}$ are precisely the intersection of
  vertex stabilizers of $T$ with $A'$, namely the conjugates of
  $A\cap A'$.  Thus $\lamination$ is carried by a vertex group of
  $T_{A'}$.
\end{proof}

\begin{lemma}\label{lem:HH-or-EE}
  With notation as above, suppose that $S,S'$ are one-edge
  $\mathcal{Z}^{\m}$-splittings in $\mathfrak{S}$. Then $S$ and $S'$
  are either hyperbolic-hyperbolic or elliptic-elliptic.  \end{lemma}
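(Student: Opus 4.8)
The plan is to establish the dichotomy by ruling out the two ``mixed'' types of pairs. By the standard trichotomy for pairs of one-edge splittings over slender (here, cyclic) subgroups \cite{RipsSela} (see also \cite{FP:JSJ}), the pair $(S,S')$ is hyperbolic-hyperbolic, elliptic-elliptic, or --- after possibly interchanging the roles of $S$ and $S'$ --- hyperbolic-elliptic, so it suffices to derive a contradiction from the assumption that $(S,S')$ is hyperbolic-elliptic: the edge group $C$ of $S$ acts hyperbolically on the Bass--Serre tree $T'$ of $S'$, while the edge group $C'$ of $S'$ fixes a vertex of the Bass--Serre tree $T$ of $S$. Let $A$ and $A'$ be the vertex groups of $S$ and $S'$ carrying $\lamination$. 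Recall from the proof of Lemma~\ref{lem:Lamination-Elliptic} that $A\cap A'$ carries $\lamination$. Since $\lamination$ is filling, $A\cap A'$ is not contained in any proper free factor of $\free$, and since a generic leaf of an attracting lamination is not carried by any cyclic subgroup (cf.\ \cite{BFH:Tits}), $A\cap A'$ is not infinite cyclic; so it will be enough to show that, in the hyperbolic-elliptic case, $A\cap A'$ must be infinite cyclic or contained in a proper free factor of $\free$.

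First I would record the structure imposed on a splitting by membership in $\mathfrak{S}$. Using the description of one-edge cyclic splittings of a free group in terms of edge folds (see \S\ref{subsec:CyclicS}), one checks that for an amalgam $S=A\ast_{C}B$ in $\mathfrak{S}$ exactly one vertex group has $C$ as a free factor; that vertex group must be the one carrying $\lamination$, since the other is then a proper free factor of $\free$ and so cannot carry the filling lamination $\lamination$. Hence, relabelling so that $A$ carries $\lamination$, we have $A=C\ast W$ with $W$ a proper free factor of $\free$, and likewise in the HNN case and for $S'$. Next I would use the mixed hypothesis. Since $C\le A$ acts hyperbolically on $T'$, the minimal $A$-subtree $T'_{A}$ of $T'$ is nondegenerate and thus yields a nontrivial graph-of-groups decomposition of $A$ in which, by Lemma~\ref{lem:Lamination-Elliptic}, $\lamination$ is carried by one of the (proper-in-$A$) vertex groups. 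On the other hand, the edge groups of $T'_{A}$ are the intersections of $A$ with conjugates of the cyclic group $C'$, and the hypothesis that $C'$ is elliptic in $T$ controls how these sit inside $A$. The goal of this step is to collapse or unfold $T'_{A}$ to conclude that $A\cap A'$ is either infinite cyclic or contained in a proper free factor of $\free$. The symmetric analysis, which uses that $A\cap A'$ is also elliptic in $T_{A'}$, should supply whatever additional information is needed.

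I expect the heart of the argument --- and the main obstacle --- to be exactly this last structural step: extracting from ``$C$ is hyperbolic in $T'$ but $C'$ is elliptic in $T$'' the conclusion that the $\lamination$-carrying vertex group of the induced decomposition $T'_{A}$ of $A$ is a free factor of $A$ (equivalently, that $T'_{A}$ may be taken to be a free splitting of $A$), which forces $A\cap A'$ into a proper free factor of $\free$. This is precisely where the hypotheses $S,S'\in\mathfrak{S}$ are indispensable: hyperbolic-elliptic pairs of one-edge cyclic splittings of free groups genuinely exist in general, so any correct proof must use the ellipticity of $\lamination$ to exclude them. Once the structural step is in hand, the remaining pieces --- the trichotomy, the structure of splittings in $\mathfrak{S}$, and the contradiction with ``filling'' and ``attracting lamination'' --- are routine given the definitions and Lemma~\ref{lem:Lamination-Elliptic}.
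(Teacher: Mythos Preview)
Your outline is incomplete: you explicitly defer the ``structural step'' that is the entire content of the lemma, and the route you sketch toward it is aimed at the wrong induced splitting. The decisive observation, which you are missing, is that the hyperbolic--elliptic asymmetry forces one of the two induced splittings to be a \emph{free} splitting---and it is $T_{A'}$ (the $A'$-minimal subtree of $T$), not your $T'_{A}$. Edge stabilizers of $T_{A'}$ lie in conjugates of $C$ and also in $A'$; since $A'$ is elliptic in $T'$ while $C$ is hyperbolic there, any finite-index subgroup of $C$ contained in $A'$ would be simultaneously elliptic and hyperbolic in $T'$, so these edge groups have infinite index in the cyclic group $C$ and are therefore trivial. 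By contrast, the edge groups of your $T'_{A}$ are intersections with conjugates of $C'$, and ``$C'$ elliptic in $T$'' gives no mechanism to force these to vanish; your hope to ``collapse or unfold $T'_{A}$'' has no evident leverage.

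Once $T_{A'}$ is seen to be a free splitting of $A'$, the paper's proof finishes quickly: since $C'$ is elliptic in $T$ it is elliptic in $T_{A'}$, so one may blow up the $A'$-vertex of $T'$ to $T_{A'}$ and collapse the $C'$-edge to obtain a free splitting $T_0$ of $\free$; Lemma~\ref{lem:Lamination-Elliptic} (or the obvious argument if $\lamination$ happens to be carried by the other vertex group of $S'$) then shows $\lamination$ is elliptic in $T_0$, contradicting filling. Note also that your preliminary structural claim about which vertex group of a cyclic splitting of $\free$ contains $C$ as a free factor, while correct, plays no role; and one should not label $A'$ in advance as the $\lamination$-carrying vertex group, since the ``WLOG'' reduction is that $A'$ is the vertex group of $S'$ acting without a global fixed point on $T$, and one then checks both possibilities for which side carries $\lamination$.
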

\begin{proof}
  The following is based on the proof of \cite[Proposition
  2.2]{FP:JSJ}. We will address the case that both the splittings are
  free products with amalgamations; when one or both are HNN
  extensions, the proof is similar. Toward a contradiction, suppose
  some element of $C$ acts hyperbolically in $T'$ and that $C'$ is
  elliptic in $T$.  Without loss of generality, we may assume that
  $C'$ fixes the vertex stabilized by $A$ in $T$. Suppose first that
  both $A'$ and $B'$ fix vertices in $T$. The two subgroups cannot fix
  the same vertex because they generate $\free$. On the other hand, if
  the vertices are distinct, then $C'$ fixes an edge in $T$.  Hence
  $C'$ must be a finite index subgroup of $C$, in contradiction to the
  assumption that $C$ is hyperbolic in $T'$.  Thus, one of $A'$ or
  $B'$ does not fix a vertex in $T$.

  Assume without loss that $A'$ does not fix a vertex of $T$.  The
  minimal subtree of $A'$ in $T$, denoted $T_{A'}$, gives a minimal
  splitting of $A'$ over an infinite index subgroup of $C$ (i.e. a
  free splitting).  Indeed, were $A'$ to split over a finite index
  subgroup $C_1$ of $C$, then $C_1$ would be elliptic in $T'$
  contradicting our assumption that $C$ is hyperbolic in $T'$.  As
  $C'$ is elliptic in $T$, it is also elliptic in $T_{A'}$.  Now
  blowup the vertex stabilized by $A'$ in $T'$ to the free splitting
  of $A'$ just obtained, and then collapse the edge stabilized by $C'$
  to get a free splitting $T_0$ of $\free$.  Then $B'$ is still
  elliptic in $T_0$.  If $\lamination$ is carried by $B'$, then
  $\lamination$ is elliptic in the free splitting $T_0$, a
  contradiction.  If $\lamination$ is carried by $A'$, then
  Lemma~\ref{lem:Lamination-Elliptic} implies that $\lamination$ is
  elliptic in $T_{A'}$.  Thus $\lamination$ is also elliptic in the
  free splitting $T_0$, again a contradiction.
\end{proof}

In \cite{FP:JSJ}, the existence of JSJ decompositions for splittings
with slender edge groups (\cite[Theorem 5.13]{FP:JSJ}) is established
via an iterative process: one starts with a pair of splittings, and
produces a new splitting which is a common refinement (in the case of
an elliptic-elliptic pair) \cite[Proposition 5.10]{FP:JSJ}, or an
enclosing subgroup \cite[Definition 4.5]{FP:JSJ} (in the case of a
hyperbolic-hyperbolic pair) \cite[Proposition 5.8]{FP:JSJ}.  One then
repeats this process for all the splittings under consideration, and
uses an accessibility result due to Bestvina-Feighn
\cite{BF:Accessibility} to conclude that the process stops after
finitely many iterations.  In order to use Fujiwara-Papasoglu's
techniques, we need only ensure that if two splittings belong to the
set $\mathfrak{S}$, then the splittings created in this process also
belong to $\mathfrak{S}$.  By examining the construction of an
enclosing subgroup for a pair of hyperbolic-hyperbolic splittings
\cite[Proposition 4.7]{FP:JSJ} and using
Lemma~\ref{lem:Lamination-Elliptic}, we see that the enclosing graph
decomposition of $\free$ for this pair of splittings indeed belongs to
$\mathfrak{S}$.  Similarly, Lemma~\ref{lem:Lamination-Elliptic}
implies that the refinement of two elliptic-elliptic splittings that
contained in $\mathfrak{S}$ is and is itself contained in
$\mathfrak{S}$.  This discussion implies that JSJ decompositions
exists for cyclic splittings of $\free$ in which $\lamination$ is
elliptic.

We conclude our foray into JSJ decompositions by using the theory of
deformation spaces \cite{Forester:Deformation,GL:DeformationSpaces} to
show that the set of JSJ splittings of $\free$ in which $\lamination$
is elliptic is finite.  By passing to a power, we will then obtain a
$\phi$-invariant splitting in $\mathfrak{S}$.

\begin{definition}[Slide moves {\cite[Section 7]{GL:DeformationSpaces}}] 
  Let $e=vw$ and $f = vu$ be adjacent edges in an $\free$-tree $T$
  such that the edge stabilizer of $f$, denoted $G_f$, is contained
  in $G_e$. Assume that $e$ and $f$ are not in the same orbit as
  non-oriented edges. Define a new tree $T'$ with the same vertex set
  as $T$ and replacing $f$ by an edge $f' = wu$ equivariantly. Then we
  say $f$ \emph{slides} across $e$. Often, a slide move is described
  on the quotient of $T$ by $\free$.
\end{definition}

\begin{definition}[{\cite{GL:DeformationSpaces,Forester:Deformation}}]
  The \emph{deformation space} $\mathcal{D}$ containing a tree $T$ is
  the set of all trees $T'$ such that there are equivariant maps from
  $T$ to $T'$ and from $T'$ to $T$, up to equivariant
  isometry. 
\end{definition}

\begin{definition}[{\cite{Forester:Deformation}}]
  A tree $T$ is reduced if no inclusion of an edge group into either
  of its vertex group is an isomorphism.
\end{definition}

\begin{theorem}[{\cite[Theorem 7.2]{GL:DeformationSpaces}}]
  \label{thm:slide moves}
  If $\mathcal{D}$ is a non-ascending deformation space, then any two
  reduced simplicial trees $T, T' \in \mathcal{D}$ may be connected by
  a finite sequence of slides.
\end{theorem}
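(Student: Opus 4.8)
The plan is to follow Guirardel and Levitt's argument, which rests on Forester's deformation theorem together with an analysis of how elementary moves on trees interact. Recall that two simplicial $\free$-trees lie in a common deformation space exactly when they are joined by a finite sequence of \emph{elementary moves}: an elementary collapse of an orbit of non-loop edges $e=vw$ with $G_e=G_v$ and $v\not\sim w$, and its inverse, an elementary expansion. A slide is precisely a composite of one elementary expansion followed by one elementary collapse that cancels it at the level of edge-orbit counts; so the theorem asserts that, for reduced trees in a non-ascending space, any elementary-move path can be reorganized into a sequence of slides.

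The first step is to reduce to a local model: it suffices to treat the case of reduced trees $T,T'$ related by a single expansion followed by a single collapse, $T\to\hat T\to T'$, with $\hat T$ having one more orbit of edges than either endpoint. One reaches this case by invoking Forester's theorem to realize a general path as a sequence of elementary moves, then using a straightening argument to group consecutive moves into expand--collapse blocks, and finally an induction --- on the number of such blocks, or on the total edge-orbit count along the path --- to peel the blocks off one at a time. Each peeled block is recorded as a slide; non-reduced intermediate trees are carried through the induction and their reducing collapses absorbed at the end.

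The core is then the local model. Given reduced $T,T'$ with $T\to\hat T\to T'$, one blows up a vertex $v$ of $T$ to create an edge $\epsilon$ of $\hat T$ and then collapses a different edge-orbit $e'$ of $\hat T$ with $G_{e'}$ equal to one of its endpoint groups. Since $T$ and $T'$ are both reduced, $e'$ cannot already be collapsible in $T$; a short case analysis of the incidences at $v$ then shows that $e'$ is an edge whose endpoint group was altered by the blow-up, and that the composite is exactly a slide of $e'$ across $\epsilon$ (up to relabeling). The \textbf{non-ascending hypothesis} is what excludes the only remaining bad case: an ascending edge --- one whose edge group equals a vertex group at one end but is conjugate into a proper subgroup at the other --- produces an expand--collapse block that genuinely cannot be realized by slides, and forbidding such configurations throughout $\mathcal{D}$ rules this out.

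The step I expect to be the main obstacle is the reduction to the local model: straightening an arbitrary elementary-move path into expand--collapse blocks while keeping track of which trees are reduced, and the accompanying induction, are the delicate part. The second potential obstacle is pinning down exactly where the non-ascending hypothesis is needed in the local case --- precisely the place where, without it, ascending HNN extensions provide genuine counterexamples. Recognizing slides as expansion-then-collapse, invoking Forester's theorem, and the local case analysis at $v$ are comparatively routine.
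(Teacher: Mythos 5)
The paper does not prove this statement: it is quoted verbatim as a black-box citation to Guirardel and Levitt (their Theorem~7.2) and used as such in the proof of Proposition~\ref{prop:v-group}, so there is no in-paper argument to compare against. As a blind reconstruction of the Guirardel--Levitt proof your sketch identifies the right ingredients --- Forester's deformation theorem producing a chain of elementary collapses and expansions, the description of a slide as an expand--collapse pair, a straightening/induction to organize that chain into expand--collapse blocks between reduced trees, and the role of the non-ascending hypothesis in excluding the ascending-HNN-type configurations that genuinely obstruct the conclusion. Your own flag is accurate, though: the straightening step (commuting moves, managing non-reduced intermediates, and the bookkeeping of the induction) is where the real content of Guirardel--Levitt's argument lives, and your proposal leaves it as a plan rather than a proof; as written it would not substitute for the cited theorem.
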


Deformation spaces consisting of trees such that no edge stabilizer
properly contains a conjugate of itself are examples of non-ascending
deformation spaces \cite[Section 7]{GL:DeformationSpaces}.  We will
only be interested in such deformation spaces here.

\begin{lemma}\label{lem:slide moves}
  Given a reduced cyclic splitting $S$, there are only finitely many
  slide moves that can be performed on $S$.  Moreover, any sequence of
  slide moves starting at $S$ has bounded length. \end{lemma}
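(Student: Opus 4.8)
The plan is to bound the number of slide moves available at any reduced cyclic splitting $S$, and then to argue that an infinite sequence of slides is impossible. First I would fix the graph of groups $\Gamma = S/\free$, which has finitely many vertices and edges since $S$ is a one-edge splitting (or, more generally, since all trees in the relevant deformation space have uniformly bounded complexity by Bestvina--Feighn accessibility \cite{BF:Accessibility}). A slide move, in the quotient picture, is specified by a choice of an oriented edge $f = vu$ together with an adjacent edge $e = vw$ at the common vertex $v$ such that $G_f \subseteq {}^g G_e$ for some $g$ (and with $e,f$ in distinct orbits). Since $\Gamma$ has finitely many edges, there are only finitely many such ordered pairs $(e,f)$ of adjacent edges, and for each pair there are only finitely many ways the inclusion $G_f \hookrightarrow G_e$ can be realized up to the action of the (cyclic, hence finitely generated) edge and vertex groups; this gives the first claim, that only finitely many slide moves can be performed on $S$.

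For the second claim, the key point is to exhibit a complexity that strictly decreases (or a quantity that is monotone and bounded) along any sequence of slides. A slide move does not change the underlying deformation space, the number of vertices, the number of edges, or the multiset of vertex-group conjugacy classes; what it changes is the incidence pattern and the edge-group inclusions. In the cyclic (or maximal-cyclic) setting, each edge group is infinite cyclic (or trivial), and sliding $f$ across $e$ replaces the inclusion $G_f \hookrightarrow G_v$ by $G_f \hookrightarrow G_w$; because $S$ is reduced, the index data $[G_e : G_f]$-type invariants behave controllably. I would set up a lexicographic complexity on reduced trees in the deformation space --- for instance, combining the number of edges whose stabilizer is a proper (infinite-index-in-some-vertex-group) subgroup with a secondary count recording how "deep" each edge group sits inside the vertex groups it maps into --- and check that each slide strictly decreases this complexity, or else that any slide which does not decrease it can be performed at most a bounded number of times before the configuration repeats, which is impossible in a tree. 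An alternative, cleaner route: invoke Theorem \ref{thm:slide moves} together with the fact that the deformation space $\mathcal{D}$ of reduced cyclic trees in which $\lamination$ is elliptic is non-ascending (edge stabilizers are cyclic, none properly containing a conjugate of itself) and has finitely many orbits of trees by accessibility; then an infinite slide sequence would have to revisit a tree, and one checks directly that no nontrivial cyclic sequence of slides returns to its starting tree, since each slide strictly moves the attaching point of some edge group "outward" along a path in the finite graph $\Gamma$ and such paths cannot close up without violating reducedness.

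The main obstacle I anticipate is the second statement: bounding the \emph{length} of a slide sequence rather than merely its branching at each step. Finiteness of the set of trees in the deformation space only prevents the sequence from visiting infinitely many distinct trees; ruling out an infinite sequence that cycles requires an honest monovariant. The delicate part is choosing that monovariant so that it is genuinely strictly monotone under \emph{every} slide (including slides that move an edge group between two vertex groups of the same isomorphism type), and verifying this using the hypotheses that $S$ is reduced and that all edge groups are (maximal) cyclic. I expect this to come down to tracking, for each orbit of edges $f$ with stabilizer $G_f = \la c_f \ra$, the conjugacy class in $\free$ of a generator $c_f$ together with the identity of the vertex orbit hosting the terminal attaching map, and showing that a slide strictly increases the "distance travelled" of some such attaching map along the finite graph while never decreasing it for the others --- a finiteness-plus-monotonicity argument whose bookkeeping is the real content of the proof.
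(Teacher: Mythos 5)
The first statement you handle correctly, though with more machinery than needed: as in the paper, it is just finiteness of the orbits of edges in $S$. The real issue is the second statement, and there you have a genuine gap. You announce twice that the content of the argument is finding a monovariant that strictly decreases along every slide, but you never produce one; you offer two candidates (a lexicographic complexity, and a ``distance travelled'' bookkeeping) without verifying monotonicity for either, and you flag this yourself as the open difficulty. Moreover, your ``alternative, cleaner route'' is circular: you propose to invoke finiteness of the set of reduced trees in the deformation space to rule out an infinite slide sequence, but in the paper this lemma, together with Theorem \ref{thm:slide moves}, is precisely what is used to \emph{establish} that finiteness. You cannot assume the conclusion.

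The paper's actual argument is short, concrete, and uses the cyclic-edge-group-in-a-free-group hypothesis in an essential way that your abstract framework never touches. If the quotient graph $S/\free$ has no loops or circuits, a slide strictly moves the attachment of $f$ along a tree, so only finitely many slides are possible and the claim is immediate. The interesting case is a loop (or circuit) $e$ with stable letter $t$: sliding $f$ across $e$ replaces $G_f \subseteq G_e$ by $G_{f'} \subseteq tG_et^{-1}$, and since $G_e \cong \mathbb{Z}$ sits inside the free group $\free$, one has $G_e \cap tG_et^{-1} = 1$. Hence $G_{f'} \not\subseteq G_e$ and the slide of $f'$ across $e$ is not available a second time; iterating around a circuit is blocked for the same reason. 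This is exactly the monovariant you were reaching for, and the point you were missing is that it comes for free from the malnormality-type fact that distinct conjugates of a cyclic subgroup of $\free$ intersect trivially --- no bespoke complexity function is required.
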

\begin{proof}
  The first statement follows from the fact that $S$ has finitely many
  orbits of edges.  For the second statement, first suppose that the
  splitting $S/\free$ does not have any loops or circuits.  Then it is
  clear that only finitely many slide moves can be performed on
  $S$. If $S$ has a loop, then we can slide an edge $f$ along the loop
  $e$ only once.  Indeed, we have $G_f \subseteq G_e$ and after sliding
  we have $G_{f'} \subseteq tG_et^{-1}$, where $t$ is the stable
  letter corresponding to the loop. Since $G_e \cong \mathbb{Z}$ and
  $G_e \cap tG_et^{-1} = 1$, $G_{f'} \not\subseteq G_e$ which prevents
  sliding of $f'$ over $e$.  The proof in the case of a circuit is
  similar.
\end{proof}

\begin{proof}[Proof of Proposition~\ref{prop:v-group}]
  By assumption, there exists a one-edge cyclic splitting $S$ such
  that $\lamination$ is elliptic in $S$.  The existence of JSJ
  decomposition for splittings in $\mathfrak{S}$ implies that the
  deformation space $\mathcal{D}$ for cyclic splittings in
  $\mathfrak{S}$ is non-empty.  Since the edge stabilizer of the trees
  in $\mathcal{D}$ is $\mathbb{Z}$, the space $\mathcal{D}$ is
  non-ascending.  Theorem~\ref{thm:slide moves} and
  Lemma~\ref{lem:slide moves} together imply that the set of reduced
  trees in $\mathcal{D}$ is finite.  As the set of reduced trees in
  $\mathcal{D}$ is $\phi$-invariant, passing to a power yields a
  reduced cyclic splitting $S'$ in $\mathcal{D}$ which is fixed by
  $\phi^k$.  The same argument works if $S$ is a maximally-cyclic
  splitting.
\end{proof}

\begin{proof}[Proof of Theorem~\ref{thm:loxodromics} (Loxodromic)] 
  Suppose that $\phi$ has a $\mathcal{Z}^{\m}$-filling lamination,
  whereby $\phi^{-1}$ does as well.  Applying Proposition
  \ref{prop:T-Z-averse} we conclude that both $\StableTree$ and
  $\UnstableTree$ are $\mathcal{Z}^{\m}$-averse.  We now argue that
  these trees determine distinct points in $\mathcal{X}^{\m}$.  We
  denote the dual lamination of a tree $T$ by $L(T)$
  \cite{CHL:DualLaminationsII}.  Since the attracting lamination
  $\lamination$ and the repelling lamination $\Rlamination$ are
  different, and $\MPlaminations \subseteq L(\PMTrees)$ and
  $\PMlaminations \nsubseteq L(\PMTrees)$, we have that $\StableTree$
  and $\UnstableTree$ are distinct points in $\cvclo$.  Both trees are
  mixing (Lemma \ref{lem:mixing}), but \cite[Proposition
  4.3]{H:CyclicSBoundary} provides that if two mixing trees in
  $\cvclo$ are equivalent (i.e., determine the same point in
  $\mathcal{X}^{\m}$), then each must collapse onto the other.  If
  there a collapse map from $T\to T'$, then $L(T)\subseteq L(T')$.  So
  if $\StableTree$ and $\UnstableTree$ were equivalent, then their
  dual laminations would be equal, a contradiction.
  
  We now argue that the limit set of $\langle\phi\rangle$ acting on
  $\CyclicS^{\m}$ consists of two points.  There is a minor
  complication arising from the fact that the folding path constructed
  in Section~\ref{sec:folding-seq} consisted entirely of trees in the
  boundary of outer space, but Theorem \ref{thm:horbez} applies only
  to sequences in the interior.  Indeed, recall from
  Section~\ref{sec:folding-seq} that $T$ denotes the universal cover
  of a relative train track map representing $\phi$ and that $T_0$ was
  obtained from $T$ by first collapsing the $\free$-translates of the
  $\mathcal{A}$-minimal subtree in $T$, then further collapsing
  according to a measure $\mu$.  Finally, recall (Proposition
  \ref{prop:T-Z-averse}) that the sequence
  $T_i=\PFevalue^{-i}T_0 \phi^i$ where $i\in\mathbb{N}$ converges to
  $\StableTree$, which is $\mathcal{Z}^{\m}$-averse.  Let
  $R_i=T\phi^i$ and let $R_\infty=\lim_{i\to\infty}R_i$.  For all
  $i\in\mathbb{N}$, $R_i$ collapses onto $T_i$, so $R_i$ and $T_i$ are
  compatible.  That compatibility passes to the limit follows from
  \cite[Corollary A.12]{GL:JSJ}, so $R_\infty$ is compatible with
  $\StableTree$ and is therefore $\mathcal{Z}$-averse.  Applying
  Theorem \ref{thm:horbez} to the sequence $\{R_i\}_{i\in\mathbb{N}}$,
  we conclude that the image sequence $\psi(R_i)$ converges to
  $[\StableTree]\in \mathcal{X}^{\m}$.  Finally, since the set of
  reducing splittings for a free simplicial $\free$-tree is bounded,
  if $S$ is any $\mathcal{Z}^{\m}$-splitting we have that $S\phi^i$
  converges to $[\StableTree]$, with a similar statement holding for
  iterates of $\phi^{-1}$.  Thus,
  $\Lambda_{\CyclicS}\langle\phi\rangle$ consists of exactly two
  points and $\phi$ therefore acts loxodromically on $\CyclicS^{\m}$.
   
  We now prove the converse: if $\phi$ acts loxodromically on
  $\CyclicS^{\m}$, then $\phi$ has a $\mathcal{Z}^{\m}$-filling
  lamination.  Indeed, if $\phi$ acts loxodromically on
  $\CyclicS^{\m}$, then $\phi$ necessarily acts loxodromically on
  $\FreeS$, and thus has a filling lamination $\lamination$.  If the
  lamination is not $\mathcal{Z}^{\m}$-filling, then Proposition
  \ref{prop:v-group} implies that a power of $\phi$ fixes a point in
  $\CyclicS^{\m}$, contradicting our assumption on $\phi$.  Thus,
  $\lamination$ is $\mathcal{Z}^{\m}$-filling.
\end{proof}

\section{Examples}
This section will provide several examples exhibiting the range of
behaviors of outer automorphisms acting on $\CyclicS$.  We begin with
an automorphism that acts loxodromically on $\CyclicS$.
\begin{example}[Loxodromic element]\label{Ex:lox1}
  Let $\phi$ be a rotationless automorphism with a CT representative
  $f\colon G\to G$ satisfying the following properties:
  \begin{itemize}
  \item $f$ has exactly two strata, each of which is EG and
    non-geometric
  \item the lamination corresponding to the top stratum of $f$ is
    filling
  \end{itemize}
  An explicit example satisfying these properties can be constructed
  using the sage-train-tracks package written by T.\ Coulbois
  \cite{C:Sage}.  The fact that the top lamination is filling
  guarantees that $\phi$ acts loxodromically on $\FreeS$.  As both
  strata are non-geometric, \cite[Fact 1.42(1a)]{HM:PartI} guarantees
  that $\phi$ does not fix the conjugacy class of any element of
  $\free$, and therefore cannot possibly fix a cyclic splitting.
  Theorem \ref{thm:classification} implies that $\phi$ acts
  loxodromically.
\end{example}

\begin{example}[Bounded orbit without fixed point]\label{Ex:bddorbit}
  By building on Example \ref{Ex:lox1} and \cite[Example
  4.2]{HM:FreeSplittingComplexII}, we can construct an automorphism
  $\psi$ which acts on $\CyclicS$ with bounded orbits but without a
  fixed point.  Let $\psi$ be a three stratum automorphism obtained
  from $f$ by creating a duplicate of $H_2$.  Explicitly, $\psi$ has a
  CT representative $f'\colon G'\to G'$ defined as follows. The graph
  $G'$ is obtained by taking two copies of $G$ and identifying them
  along $G_1$.  Each edge $E$ of $G'$ is naturally identified with an
  edge of $G$, and $f'(E)$ is defined via this identification.
  Moreover, the marking of $G$ naturally gives a marking of $G'$ (by a
  larger free group).  That $f'$ is a CT is evident from the fact that
  $f$ is a CT.

  There are three laminations in $\mathcal{L}(\psi)$, and it's evident
  that none are filling.  Since the top lamination in
  $\mathcal{L}(\phi)$ (where $\phi$ is as in Example~\ref{Ex:lox1}) is
  filling, we know that $\mathcal{L}(\psi)$ must fill.  Thus, $\psi$
  acts on $\FreeS$ with bounded orbits.  As before, \cite[Fact 1.42
  (1a)]{HM:PartI} implies that $\psi$ doesn't fix the conjugacy class
  of any element of $\free$: while each stratum may have an INP,
  $\rho_i$, none of these INPs can be closed loops, nor can they
  be concatenated to form a closed loop.  Thus, $\psi$ does not fix
  any one-edge cyclic splitting and therefore must act on $\CyclicS$
  with bounded orbits, but no fixed point.  See
  Figure~\ref{fig:EG-Splitting} for a pictorial representation of
  $\psi$.  The INPs $\rho_2$ and $\rho_3$ must each have at least one
  endpoint which is not in $H_1$.
  \begin{figure}[h]
    \centering{ \def\svgwidth{.6\linewidth} 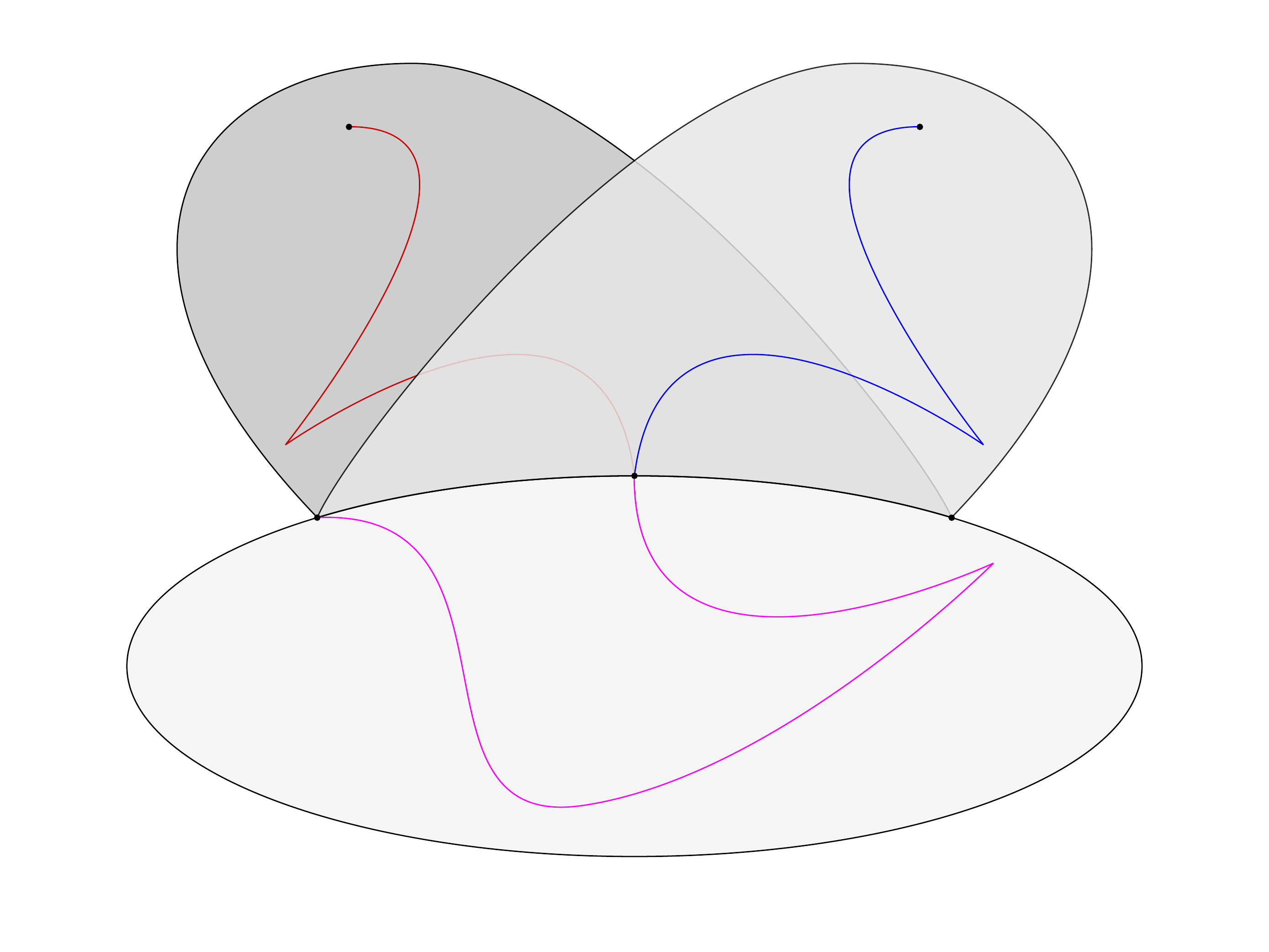
      \caption{A CT representative for the automorphism in Example
        \ref{Ex:bddorbit}, which acts with bounded orbits but no fixed
        point.}
      \label{fig:EG-Splitting}
    }
  \end{figure}
\end{example}

\begin{example}[Loxodromic element]\label{Ex:lox2}
  Consider the outer automorphism $\phi\colon F_4 \to F_4$ given by
  \begin{equation*}
    \phi(a) = ab, \phi(b) =bcab, \phi(c) = d, \phi(d) = cd.
  \end{equation*}
  In \cite{R:ReducingSystems}, it is shown that the stable tree for
  $\phi$ is indecomposable and hence $\mathcal{Z}$-averse.  Therefore
  $\phi$ acts loxodromically on $\CyclicS$.
\end{example}

\begin{example}[Fixed point]\label{Ex:fixedpt}
  Let $\Sigma_{2,1}$ be the surface of genus two with one puncture.
  Consider the free homotopy class of a simple separating curve which
  divides $\Sigma_{2,1}$ into two subsurfaces: a once punctured torus
  and a twice punctured torus.  Placing a pseudo-Anosov on each of
  these subsurfaces and taking the outer automorphism induced by this
  mapping class yields an element of $\Out(\free)$ that acts
  loxodromically on $\FreeS$, but fixes a point in $\CyclicS$.  A
  similar example using non-separating simple curve can be found in
  the proof of \cite[Proposition 3]{M:CyclicS}.
\end{example}

\section{Virtually Cyclic Centralizers}
In this section, we investigate centralizers of automorphisms acting
loxodromically on $\CyclicS$.  To do this, we use the machinery of
completely split train tracks, and the ``disintegration'' procedure of
\cite{FH:Abelian}, which takes a rotationless outer automorphism and
returns an abelian subgroup of $\Out(\free)$.  The main result is:

\theoremstyle{theorem} \newtheorem*{thm:centralizers}{Theorem
  \ref{thm:centralizers}}
\begin{thm:centralizers}
  An outer automorphism with a filling lamination has a virtually
  cyclic centralizer in $\Out(\free)$ if and only if the lamination is
  $\mathcal{Z}$-filling.
\end{thm:centralizers}

We begin with a terse review of disintegration for outer
automorphisms.

\subsection{Disintegration and rotationless abelian subgroups in
  $\Out(\free)$}

Given a mapping class $f$ in Thurston normal form, there is a
straightforward way of making a subgroup of the mapping class group,
called \emph{the disintegration of $f$}, by ``doing one piece at a
time.''  The subgroup is easily seen to be abelian as each pair of
generators can be realized as homeomorphisms with disjoint supports.
The process of disintegration in $\Out(\free)$ is analogous, but more
difficult.

The reader is warned that we will only review those ingredients from
\cite{FH:Abelian} that will be used directly; the reader is directed
there, specifically to \S 6, for complete details.  Given a
rotationless outer automorphism $\phi$, one can form an abelian
subgroup called $\mathcal{D}(\phi)$.  The process of disintegrating
$\phi$ begins by creating a finite graph, $B$, which records the
interactions between different strata in a CT representing $\phi$.  As
a first approximation, the components of $B$ correspond to generators
of $\mathcal{D}(\phi)$.  However, there may be additional relations
between strata that are unseen by $B$, so the number of components of
$B$ only gives an upper bound to the rank of $\mathcal{D}(\phi)$.

Let $f\colon G\to G$ be a CT representing the rotationless outer
automorphism $\phi$.  While the construction of $\mathcal{D}(\phi)$
does depend on $f$, using different representatives will produce
subgroups that are commensurable.

Let $E_i,E_j$ be distinct linear edges in $G$ with the same axis $w$
so that $f(E_i)=E_iw^{d_i}$ and $f(E_j)=E_jw^{d_j}$ for integers
$d_i\neq d_j$.  Recall that if $d_i,d_j>0$, then we a path of the form
$E_iw^*\overline{E}_j$ called an \emph{exceptional path}.  In the same
scenario, if $d_i$ and $d_j$ have different signs, we call such a path
a \emph{quasi-exceptional path}.  It would be instructive for the
reader to compute the $f$-image of some exceptional and
quasi-exceptional paths.  We will need to consider a weakening of the
complete splitting of paths and circuits in $f$.  The
\emph{quasi-exceptional splitting} of a completely split path or
circuit $\sigma$ is the coarsening of the complete splitting obtained
by considering each quasi-exceptional subpath to be a single element.

\begin{definition}
  Define a finite directed graph $B$ as follows.  There is one vertex
  $v_i^B$ for each nonfixed irreducible stratum $H_i$.  If $H_i$ is
  NEG, then a $v_i^B$-path is defined as the unique edge in $H_i$; if
  $H_i$ is EG, then a $v_i^B$-path is either an edge in $H_i$ or a
  taken connecting path in a zero stratum contained in $H_i^z$.  There
  is a directed edge from $v_i^B$ to $v_j^B$ if there exists a
  $v_i^B$-path $\kappa_i$ such that some term in the QE-splitting of
  $f_{\#}(\kappa_i)$ is an edge in $H_j$.  The components of $B$ are
  labeled $B_1,\ldots, B_K$.  For each $B_s$, define $X_s$ to be the
  minimal subgraph of $G$ that contains $H_i$ for each NEG stratum
  with $v_i^B\in B_s$ and contains $H_i^z$ for each EG stratum with
  $v_i^B\in B_s$.  We say that $X_1,\ldots,X_K$ are the \emph{almost
    invariant subgraphs} associated to $f\colon G\to G$.
\end{definition}

The reader should note that the number of components of $B$ is left
unchanged if an iterate of $f_\#$ is used in the definition, rather
than $f_\#$ itself.  In the sequel, we will frequently make statements
about $B$ using an iterate of $f_\#$.

For each $K$-tuple $\vec{a}=(a_1,\ldots,a_K)$ of non-negative
integers, define
\begin{equation*}
  f_{\vec{a}}(E)=
  \begin{cases}
    f_{\#}^{a_i}(E)	&\text{if }E\in X_i\\
    E &\text{if }E\text{ is fixed by }f
  \end{cases}
\end{equation*}
It turns out that $f_{\vec{a}}$ is always a homotopy equivalence of
$G$ \cite[Lemma 6.7]{FH:Abelian}, but in general
$\langle f_{\vec{a}}\mid \vec{a}\text{ is a non-negative
  tuple}\rangle$ is not abelian.  To obtain an abelian subgroup, one
has to pass to a certain subset of tuples which take into account
interactions between the almost invariant subgraphs that are unseen by
$B$. The reader is referred to \cite[Example 6.9]{FH:Abelian} for an
example.

\begin{definition}\label{def:admissible}
  A $K$-tuple $(a_1,\ldots,a_K)$ is called \emph{admissible} if for
  all axes $\mu$, whenever
  \begin{itemize}
  \item $X_s$ contains a linear edge $E_i$ with axis $\mu$ and
    exponent $d_i$,
  \item $X_t$ contains a linear edge $E_j$ with axis $\mu$ and
    exponent $d_j$,
  \item there is a vertex $v^B$ of $B$ and a $v^B$-path
    $\kappa\subseteq X_r$ such that some element in the
    quasi-exceptional family $E_i\overline{E}_j$ is a term in the
    QE-splitting of $f_\#(\kappa)$,
  \end{itemize}
  then $a_r(d_i-d_j)=a_sd_i-a_td_j$.
\end{definition}

The disintegration of $\phi$ is then defined as
\begin{equation*}
  \mathcal{D}(\phi)=\langle f_{\vec{a}}\mid \vec{a}\text{ is admissible}\rangle,
\end{equation*}

which is abelian by \cite[Corollary 6.16]{FH:Abelian}.

We now recall some useful facts concerning abelian subgroups of
$\Out(\free)$, which were studied in \cite{FH:Abelian}.  If an abelian
subgroup $H$ is generated by rotationless automorphisms, then all
elements of $H$ are rotationless \cite[Corollary 3.13]{FH:Abelian}.
In this case, $H$ is said to be rotationless.  Rotationless abelian
subgroups of $\Out(\free)$ have finitely many attracting laminations
(\cite[Lemma 4.4]{FH:Abelian}), i.e., if $H$ is abelian and
$\mathcal{L}(H):=\bigcup_{\phi\in H}\mathcal{L}(\phi)$, then
$|\mathcal{L}(H)|<\infty$.

In \cite{FH:Abelian}, the authors associate to each rotationless
abelian subgroup of $\Out(\free)$ a finite collection of (nontrivial)
homomorphisms to $\mathbb{Z}$.  Combining these, one obtains a
homomorphism $\Omega\colon H\to \mathbb{Z}^N$ that is injective
\cite[Lemma 4.6]{FH:Abelian}.  An element $\psi\in H$ is said to be
\emph{generic} if all coordinates of $\Omega(\psi)$ are nonzero.  For
the purposes of this section, we require only two facts concerning
$\Omega$.  First, some of the coordinates of $\Omega$ correspond to
elements in the finite set $\mathcal{L}(H)$ (there are other
coordinates, which we will not need).  Second is the fact that the
coordinate of $\Omega(\psi)$ corresponding to
$\Lambda\in\mathcal{L}(H)$ is positive if and only if
$\Lambda\in\mathcal{L}(\psi)$.

\subsection{From disintegrations to centralizers}
In this subsection, we explain how to deduce Theorem
\ref{thm:centralizers} from the following proposition concerning the
disintegration of elements acting loxodromically on $\CyclicS$.  The
proof of Proposition~\ref{prop:D-cyclic} is postponed until the next
subsection.

\begin{proposition}\label{prop:D-cyclic}
  If $\phi$ is rotationless and has a $\mathcal{Z}$-filling
  lamination, then $\mathcal{D}(\phi)$ is virtually cyclic.
\end{proposition}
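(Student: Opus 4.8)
The plan is to show that $\mathcal{D}(\phi)$ has rank one by exploiting the fact that $\phi$ has a $\mathcal{Z}$-filling lamination. Let $f\colon G\to G$ be a CT representing $\phi$ and let $B$ be the graph built from it, with components $B_1,\ldots,B_K$ and associated almost invariant subgraphs $X_1,\ldots,X_K$. Write $\Lambda=\lamination$ for the $\mathcal{Z}$-filling attracting lamination of $\phi$; it corresponds to some EG stratum $H_r$ whose vertex $v_r^B$ lies in one component, say $B_1$. The key claim is: \emph{every almost invariant subgraph $X_s$ with $s\ne 1$ consists entirely of NEG strata that are either fixed or whose Nielsen paths $w_i$ are loops or concatenate to loops.} If some $X_s$ ($s \ne 1$) contained an EG stratum, or an NEG stratum whose twistor $w_i$ were not (a product of) periodic conjugacy classes, one could extract from it a proper free factor or, more to the point, a conjugacy class that is fixed (up to power) by a generic element of $\mathcal{D}(\phi)$ — and combining this with the disintegration machinery would produce an element of $\mathcal{D}(\phi)$ fixing a nontrivial conjugacy class whose axis is not crossed by leaves of $\Lambda$. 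That element then fixes a one-edge $\mathcal{Z}$-splitting (namely an edge fold along the relevant cyclic subgroup, or a free splitting), and since $\mathcal{D}(\phi)$ is abelian and contains $\phi$, this contradicts the fact that $\Lambda$ is $\mathcal{Z}$-filling: the leaves of $\Lambda$ would be carried by a vertex group of that splitting.

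The mechanism I would use to make this precise is the homomorphism $\Omega\colon \mathcal{D}(\phi)\to\mathbb{Z}^N$. Each coordinate of $\Omega$ is attached either to an element of $\mathcal{L}(\mathcal{D}(\phi))$ or to a twisting coordinate (a linear edge / axis), and $\Omega$ is injective. The coordinates coming from laminations other than $\Lambda$ and the twisting coordinates all correspond, geometrically, to data supported away from $H_r^z$ — that is, to the subgraphs $X_s$ with $s\ne 1$. So the plan is: (1) show that each such $X_s$, if it supports any nontrivial $\Omega$-coordinate, yields (via \cite{HM:PartI}, Fact 1.42-type statements already invoked in the Examples section) an invariant conjugacy class or an invariant free factor carrying $\Lambda$'s complement, hence an invariant $\mathcal{Z}$-splitting for a generic power of $\phi$; (2) observe that $\phi$ itself fixes that splitting up to a power, since $\mathcal{D}(\phi)$ is abelian and $\Omega$-generic elements detect the same laminations as $\phi$; (3) conclude that $\Lambda$ is not $\mathcal{Z}$-filling, a contradiction. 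Therefore all $\Omega$-coordinates except the one attached to $\Lambda$ vanish identically on $\mathcal{D}(\phi)$, so $\Omega$ has image in a rank-one subgroup of $\mathbb{Z}^N$, and injectivity of $\Omega$ forces $\mathcal{D}(\phi)$ to be (infinite) cyclic, a fortiori virtually cyclic.

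I expect the main obstacle to be step (1): carefully ruling out \emph{all} the ways an extra $\Omega$-coordinate could fail to produce a $\phi$-invariant $\mathcal{Z}$-splitting. The delicate cases are an EG stratum $H_j$ (with $j\ne r$) disjoint in $B$ from $v_r^B$ — here one must argue that the sublamination attached to $H_j$ is carried by a proper free factor that together with the rest of $G$ gives a free (hence $\mathcal{Z}$-) splitting in which $\Lambda$ is elliptic — and a linear (NEG) edge $E_i$ whose twistor $w_i$ is a root-free closed Nielsen path: performing the edge fold corresponding to $\langle w_i\rangle$ (as in Section~\ref{subsec:CyclicS}) gives a one-edge cyclic splitting, and one must check it is $\phi^k$-invariant and that $\Lambda$ is carried by one of its vertex groups, contradicting $\mathcal{Z}$-filling. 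The bookkeeping needed to see that the almost invariant subgraph $X_s$ is genuinely ``off to the side'' of $H_r^z$ — i.e., that the graph $B$ has $v_r^B$ in its own component not receiving edges from the others in a way that entangles the strata — is where the $\mathcal{Z}$-filling hypothesis must be fed in, and making that entanglement argument airtight is the crux.
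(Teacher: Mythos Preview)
Your final step contains a genuine error: it is \emph{not} true that all $\Omega$-coordinates other than the one attached to $\Lambda$ vanish on $\mathcal{D}(\phi)$. Remember that $\phi$ itself lies in $\mathcal{D}(\phi)$ (it corresponds to the admissible tuple $(1,\ldots,1)$), and $\Omega(\phi)$ typically has many nonzero coordinates---one for each EG stratum of $f$ and one twisting coordinate for each comparable pair of linear edges. What must be shown is that $\Omega(\mathcal{D}(\phi))$ has \emph{rank} one, i.e.\ that all these coordinates are forced to be proportional, not that they vanish. Your argument, as written, would prove that $\phi$ has no EG stratum besides $H_r$ and no linear edges at all, which is false in general.

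The deeper structural gap is that you never engage with the admissibility constraints (Definition~\ref{def:admissible}), and these are the entire mechanism by which the rank drops. The paper's proof proceeds in two genuinely different stages. First, it shows (Lemmas~\ref{lem:NEGNL-B} and~\ref{lem:EG-B}) that every EG and every nonlinear NEG stratum lies in the main component $B_1$; the EG case is handled not by finding a $\phi$-invariant splitting abstractly, but by an explicit topological construction---blowing up endpoints of the height-$r$ INP and gluing in a rectangle or cylinder---that produces a $\mathcal{Z}$-splitting in which the generic leaf $\ell$ would be elliptic if $H_r$ failed to appear as a QE-splitting unit of $\ell$. This leaves the other components $B_2,\ldots,B_K$ as isolated linear NEG edges, so the number $K$ of components can certainly exceed one. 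Second---and this is what your proposal is missing entirely---the paper defines an equivalence relation on linear edges sharing an axis, uses the same cylinder-gluing trick to show there is only one class, and then reads off from the admissibility equations $a_r(d_i-d_j)=a_sd_i-a_td_j$ that choosing $a_1$ determines every other $a_k$. Your proposed route via ``edge fold along $\langle w_i\rangle$ gives a splitting with $\Lambda$ elliptic'' does not work: the mere existence of a linear edge with axis $w$ gives no reason for $\ell$ to be carried by a vertex group of the resulting cyclic splitting, and indeed the paper's analysis shows $\ell$ always crosses such edges as QE-splitting units.
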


\begin{proof}[Proof of Theorem \ref{thm:centralizers}]
  Suppose $\psi\in C(\phi)$ has infinite order and assume that
  $\langle \phi,\psi\rangle\simeq \mathbb{Z}^2$.  If no such element
  exists, then $C(\phi)$ is virtually cyclic, as there is a bound on
  the order of a finite subgroup of $\Out(\free)$ \cite{C:Finite}.
  Now let $H_R$ be the finite index subgroup of
  $\langle \phi,\psi\rangle$ consisting of rotationless elements
  \cite[Corollary 3.14]{FH:Abelian} and let $\psi'$ be a generic
  element of this subgroup.  If the coordinate of $\Omega(\psi')$
  corresponding to the $\mathcal{Z}$-filling lamination $\lamination$
  is negative, then replace $\psi'$ by $(\psi')^{-1}$, which is also
  generic.  Since $\lamination\in \mathcal{L}(\psi')$ is
  $\mathcal{Z}$-filling, Theorem~\ref{thm:loxodromics} implies that
  $\psi'$ acts loxodromically on $\CyclicS$.  Since $\psi'$ is generic
  in $H_R$, \cite[Theorem 7.2]{FH:Abelian} says that
  $\mathcal{D}(\psi')\cap \langle \phi,\psi\rangle$ has finite index
  in $\langle\phi,\psi\rangle$.  This contradicts Proposition
  \ref{prop:D-cyclic}, which says that the disintigration of $\psi'$
  is virtually cyclic.
\end{proof}

\subsection{The proof of Proposition~\ref{prop:D-cyclic}}
The idea of the proof is as follows.  We noted above that the number
of components in $B$ only gives an upper bound to the rank of
$\mathcal{D}(\phi)$; it may happen that there are interactions between
the strata of $f$ that are unseen by $B$ (Definition
\ref{def:admissible}).  We will obtain precise information about the
structure of $B$; it consists of one main component ($B_1$), and
several components consisting of a single point ($B_2,\ldots,B_K$).
We will then show that the admissibility condition provides
sufficiently many constraints so that choosing $a_1$ determines
$a_2,\ldots,a_K$.  Thus, the set of admissible tuples consists of a
line in $\mathbb{Z}^K$.

Let $f\colon G\to G$ be a CT representing $\phi$ with filtration
$\fltr$.  Let $\lamination\in\mathcal{L}(\phi)$ be
$\mathcal{Z}$-filling and let $\ell\in\lamination$ be a generic leaf.
As $\lamination$ is filling, the corresponding EG stratum is
necessarily the top stratum, $H_M$.  We will understand the graph $B$
by studying the realization of $\ell$ in $G$.  The results of \cite[\S
3.1]{BFH:Tits}, together with Lemma 4.25 of
\cite{FH:RecognitionTheorem} give that the realization of $\ell$ in
$G$ is completely split, and this splitting is unique.  Thus, we may
consider the QE-splitting of $\ell$.

We begin with a lemma that allows the structure of INPs and
quasi-exceptional paths to be understood inductively.

\begin{lemma}\label{lem:INPQE-ind}
  Let $H_r$ be a non-fixed irreducible stratum and let $\rho$ be a
  path of height $s\geq r$ which is either an INP or a
  quasi-exceptional path.  Assume further that $\rho$ intersects $H_r$
  non-trivially.  Then one of the following holds:
  \begin{itemize}
  \item $H_r$ and $H_s$ are NEG linear strata with the same axis, each
    consisting of a single edge $E_r$ (resp.\ $E_s$), and
    $\rho=E_sw^k\overline{E}_r$, for some $k \in \mathbb{Z}$, where
    $w$ is a closed, root-free Nielsen path of height $<s$.
  \item $\rho$ can be written as a concatenation
    $\rho=\beta_0\rho_1\beta_1\rho_2\beta_2\ldots\rho_j\beta_j$, where
    each $\rho_i$ is an INP of height $r$ and each $\beta_i$ is a path
    contained in $G-\inter(H_r)$ (some of the $\beta_i$'s may be
    trivial).
  \end{itemize}
\end{lemma}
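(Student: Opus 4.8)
The plan is to induct on the height $s$ of $\rho$, proving the statement for every non-fixed irreducible stratum $H_r$ with $r\le s$ at once, so that the inductive hypothesis can be applied to any subpath of strictly smaller height with the same $H_r$. I will use freely that a path in a CT has an essentially unique complete splitting, that cancellation under iteration of $f_\#$ stays inside the individual terms of such a splitting, and the structural properties in Theorem~\ref{def:ct}, above all (NEG Nielsen Paths), (Linear Edges), (Vertices), and (Zero Strata). A preliminary point to establish is that the top stratum $H_s$ of $\rho$ is always irreducible: if $\rho$ is a quasi-exceptional path its top edge is a linear edge, hence lies in an NEG stratum; and if $\rho$ is an INP, a zero-stratum edge can occur in $\rho$ only inside a maximal taken connecting path, which is a complete-splitting term whose $f_\#$-image crosses the enveloping EG stratum, contradicting $f_\#(\rho)=\rho$.

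Case 1: $H_s$ is NEG, so it is a single edge $E_s$. If $\rho$ is an INP, (NEG Nielsen Paths) forces $\rho=E_sw^k\overline E_s$ for a closed root-free Nielsen path $w=w_s$ of height $<s$; if $\rho$ is a quasi-exceptional path then, after possibly reversing it, $\rho=E_sw^{\ast}\overline E_{j}$ where $E_j$ is a second linear edge with the same axis $w$, in a strictly lower NEG stratum. In both cases, if $H_r$ is the NEG stratum of the ``other'' linear edge (for the INP this forces $r=s$; for the quasi-exceptional path $H_r$ is the stratum of $E_j$), then $H_r$ and $H_s$ are NEG linear strata with common axis $w$ and $\rho$ has exactly the form in the first bullet. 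Otherwise every occurrence of $H_r$ in $\rho$ lies inside the subpaths equal to $w$; since $w$ is a closed root-free Nielsen path its complete splitting consists only of fixed edges and of INPs of height $<s$, so I would apply the inductive hypothesis to each INP term meeting $H_r$ --- observing that when such a term falls under the first bullet it has NEG height and so, by (NEG Nielsen Paths), is itself an INP of height $r$ --- and then concatenate the resulting local decompositions across the copies of $w$, absorbing $E_s,\overline E_s$ (and $\overline E_j$ in the quasi-exceptional case) into the outer $\beta$-pieces, to reach the second bullet.

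Case 2: $H_s$ is EG, so $\rho$ is an INP. If $r=s$ we take $j=1$, $\rho_1=\rho$, with trivial $\beta_i$. If $r<s$, write $\rho=\alpha\overline\beta$ with $\alpha,\beta$ the maximal $s$-legal subpaths meeting at the unique illegal $H_s$-turn of $\rho$; the CT description of EG Nielsen paths says that $\alpha$ and $\beta$ are completely split, their terms being single edges in $H_s$, maximal taken connecting paths in zero strata enveloped by $H_s$, or INPs, exceptional paths, or quasi-exceptional paths of height $<s$. Since $r<s$ and zero-stratum terms consist only of zero-stratum edges, every occurrence of $H_r$ in $\rho$ sits inside an INP, exceptional, or quasi-exceptional term; applying the inductive hypothesis to each such term meeting $H_r$, and using (Vertices) --- the endpoints of the pieces $\rho_i$, being endpoints of INPs, are principal vertices --- these local decompositions glue along $\rho$ into the decomposition claimed in the second bullet.

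I expect Case 2 to be the main obstacle. One must pin down the exact complete-splitting structure of the two ends $\alpha,\beta$ of an EG INP in a CT and then verify that every occurrence of $H_r$ inside them is genuinely absorbed into an INP of height $r$ --- the delicate point being the exceptional- and quasi-exceptional-path terms, whose single ``high'' linear edge is not a priori interior to any INP --- together with the bookkeeping that fuses the inductively produced decompositions of the subterms into a single decomposition of $\rho$ with pieces of the stated types. By comparison the NEG case and the descent into the axis $w$ are routine, modulo the same care with (NEG Nielsen Paths).
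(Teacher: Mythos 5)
Your overall strategy is exactly the paper's: strong induction on the height $s$ of $\rho$, with cases according to whether $H_s$ is NEG or EG.  Your NEG case matches the paper's argument (and is actually slightly more explicit: you observe that, because the axis $w$ is a Nielsen path rather than a single INP, the inductive hypothesis must be applied to the INP pieces of $w$'s Nielsen decomposition, and you verify that any such piece falling under the first bullet is forced by (NEG Nielsen Paths) to be an INP of height exactly $r$).  Where you diverge — and where you correctly sense the weak spot — is the EG case.

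The gap is real, but it is also easier to fill than you think, and your proposed route is not the right one.  You try to describe a ``complete splitting of $\alpha$ and $\beta$'' whose terms include exceptional and quasi-exceptional paths, and then you worry (rightly, given that description) that a high linear edge of such a term might not be absorbable into an INP of height $r$.  But no such terms appear.  The correct structural input, which is what the paper cites, is \cite[Lemma 4.24(2)]{FH:RecognitionTheorem}: for an INP $\rho$ of EG height $s$, the maximal subpaths of $\rho$ lying in $G_{s-1}$ are themselves Nielsen paths.  A Nielsen path's splitting pieces can only be fixed edges and INPs — exceptional paths $E_iw^m\overline{E}_j$ satisfy $f_\#(E_iw^m\overline{E}_j)=E_iw^{m+d_i-d_j}\overline{E}_j$ with $d_i\neq d_j$ and so are never fixed, and likewise quasi-exceptional paths and taken connecting paths are not Nielsen.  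So once you invoke the FH lemma, every occurrence of $H_r$ in $\rho$ (for $r<s$) sits inside an INP of height $<s$, the inductive hypothesis applies directly, and the ``delicate'' exceptional/quasi-exceptional terms you flag simply do not arise.  In short: same proof skeleton, with your EG case replaced by a clean application of \cite[Lemma 4.24(2)]{FH:RecognitionTheorem} rather than an ad hoc decomposition of the two legal halves $\alpha,\beta$.
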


\begin{proof}
  The proof proceeds by strong induction on the height $s$ of the path
  $\rho$.  In the base case, $s=r$, and $\rho$ is either an INP of
  height $r$ or a quasi-exceptional path of the form described.  The
  inductive step breaks into cases according whether $H_s$ is an EG
  stratum, or an NEG stratum.

  If $H_s$ is an EG stratum, then $\rho$ must be an INP, as there are
  no exceptional paths of EG height.  In this case, \cite[Lemma 4.24
  (2)]{FH:RecognitionTheorem} provides a decomposition of $\rho$ into
  subpaths of height $s$ and maximal subpaths of height $<s$, and each
  of the subpaths of height $<s$ is a Nielsen path.  The inductive
  hypothesis then guarantees that each of these Nielsen paths has the
  desired form.  By breaking apart and combining these terms
  appropriately, we conclude that $\rho$ does as well.

  Suppose now that $H_s$ is an NEG stratum and let $E_s$ be the unique
  edge in $H_s$.  Using (NEG Nielsen Paths), we see that $E_s$ must be
  a linear edge, and therefore that $\rho$ is either
  $E_s w^k\overline{E_s}$ or $E_s w^k\overline{E'}$, where $E'$ is
  another linear edge with the same axis and $w$ is a closed root free
  Nielsen path of height $<s$.  If $H_r$ is NEG linear, and $E'=E_r$,
  then the first conclusion holds.  Otherwise, we may apply the
  inductive hypothesis to $w$ to obtain a decomposition as desired.
  This completes the proof.
\end{proof}

We now begin our study of the graph $B$.  We call the component of $B$
containing $v_M^B$, the vertex corresponding to the topmost stratum of
$f$, the \emph{main component}.

\begin{lemma}\label{lem:NEGNL-B}
  All nonlinear NEG strata are in the main component of $B$.
\end{lemma}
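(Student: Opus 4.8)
*All nonlinear NEG strata are in the main component of $B$.*

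\textbf{Plan of proof.} The plan is to use the filling hypothesis to show that a generic leaf of $\lamination$ crosses every edge of $G$, then to locate the edge of a given nonlinear NEG stratum inside the (unique) complete splitting of that leaf and trace it back up the filtration to the top stratum $H_M$.

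First I would establish that if $\ell$ is a generic leaf of $\lamination$ realized in $G$, then $\ell$ crosses every edge of $G$. Let $W\subseteq G$ be the subgraph swept out by $\ell$. It is connected, since the image of $\ell$ is connected, and every vertex of $W$ has valence at least two in $W$, since $\ell$ is bi-infinite and immersed and so enters and leaves each such vertex along edges of $W$; hence $W$ is a core graph. By the standard argument of extending a maximal forest of $W$ to a maximal forest of $G$, the group $\pi_1(W)$ is a free factor of $\free$, and it carries $\ell$ and therefore $\lamination$. Since $\lamination$ is filling, $\pi_1(W)$ is not a proper free factor, so $\rk(W)=\rk(G)$. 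A proper connected subgraph of a finite core graph has strictly smaller rank: an Euler characteristic count shows that otherwise the vertices and edges of $G$ not in $W$ would form a nonempty union of cycles disjoint from $W$, contradicting connectedness of $G$. Hence $W=G$.

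Next, fix a nonlinear NEG stratum $H_i$ with single edge $E_i$. Since $\ell$ crosses $E_i$, the edge $E_i$ lies in some term of the complete splitting of $\ell$. I would rule out that this term is a connecting path in a zero stratum (it consists of zero-edges), an exceptional path (then $E_i$ would be one of the two linear edges of that path, or would lie inside its root-free axis and hence inside an indivisible Nielsen path), or an indivisible Nielsen path $\rho$. For the last case, apply Lemma~\ref{lem:INPQE-ind} with $H_r=H_i$: its first alternative makes $E_i$ linear, while its second exhibits $\rho$ as a concatenation of height-$i$ INPs and paths avoiding $\inter(H_i)$, so $E_i$ lies inside a height-$i$ INP, which by (NEG Nielsen Paths) has the form $E_i w_i^k\overline{E_i}$, again making $E_i$ linear. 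Every contradiction uses nonlinearity of $H_i$. Hence $E_i$ is itself a term of the complete splitting of $\ell$.

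Finally, realizing $\ell$ as a local limit of $f_\#^n(E_M)$ for an edge $E_M$ of $H_M$, the term $E_i$ first appears in the complete splitting of $f_\#^n(E_M)$ for some minimal $n$, so $E_i$ is a term of the complete splitting of $f_\#(\tau)$ for a term $\tau$ of the complete splitting of $f_\#^{n-1}(E_M)$. Since INPs and exceptional paths are atomic units carried by $f_\#$ to atomic units of the same kind, $\tau$ is neither of these; thus $\tau$ is an edge of an irreducible stratum $H_j$ or a taken connecting path in a zero stratum, i.e.\ a $v_j^B$-path. Nonlinearity of $H_i$ guarantees $E_i$ is not absorbed into a quasi-exceptional subpath, so it persists as a term of the QE-splitting of $f_\#(\tau)$, giving a directed edge $v_j^B\to v_i^B$ in $B$. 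Iterating the descent on $\tau$ terminates at $E_M$, producing a directed path from $v_M^B$ to $v_i^B$, so $v_i^B$ lies in the main component. The step I expect to require the most care is this final descent: one must check that every new single-edge term of a complete splitting is produced by applying $f_\#$ to an irreducible-stratum edge or a taken connecting path, rather than manufactured inside an iterated exceptional or Nielsen path, and that passing to the QE-splitting does not swallow $E_i$ — both points resting on the nonlinearity of $H_i$ and the atomicity of Nielsen and exceptional terms under $f_\#$.
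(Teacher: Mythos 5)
Your proposal is correct and follows essentially the same approach as the paper: show that the single edge $E_i$ occurs as a QE-splitting unit in the generic leaf $\ell$, ruling out the INP/quasi-exceptional alternatives via Lemma~\ref{lem:INPQE-ind} and nonlinearity (through (NEG Nielsen Paths)), then conclude $v_i^B$ lies in the main component of $B$. You fill in two steps the paper leaves implicit -- the core-graph/free-factor argument that a filling leaf crosses every edge of $G$, and the explicit descent through iterates of $f_\#$ from $E_M$ to $E_i$ -- but the structure and key lemma are the same.
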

\begin{proof}
  Let $H_r$ be a nonlinear NEG stratum, with single edge $E_r$.  It is
  enough to show that the single edge $E_r$ occurs as a term in the
  QE-splitting of $\ell$ (henceforth, we will say that $E_r$ is a
  \emph{QE-splitting unit} in $\ell$), as this implies that there is
  an edge in $B$ connecting $v_M^B$ to $v_r^B$.  As $\ell$ is filling,
  we know that its realization in $G$ must cross $E_r$.  If the
  corresponding QE-splitting unit of $\ell$ is the single edge $E$,
  then we are done.  The only other possibility is that the
  QE-splitting unit is an INP or a quasi-exceptional path of some
  height $s\geq r$.  An application of Lemma \ref{lem:INPQE-ind} shows
  that this is impossible, as it would imply the existence of an INP
  of height $r$ or a quasi-exceptional path of the form
  $E_rw^*\overline{E}'$, contradicting (NEG Nielsen Paths).
\end{proof}

\begin{lemma}\label{lem:EG-B}
  All EG strata are in the main component of $B$.
\end{lemma}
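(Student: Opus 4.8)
The plan is to parallel the proof of Lemma~\ref{lem:NEGNL-B}. Fix an EG stratum $H_r$; if $r=M$ there is nothing to prove, so assume $r<M$. I will show that some edge of $H_r$ occurs as a QE-splitting unit of the generic leaf $\ell$ of $\lamination$. Granting this, recall that $\ell$ is the increasing union of the completely split paths $f_\#^n(E)$ for a suitable edge $E$ of $H_M$ — that is, for a $v_M^B$-path — and that, by uniqueness of complete splittings, the QE-splitting of $\ell$ restricts on each $f_\#^n(E)$ to the QE-splitting of $f_\#^n(E)$. Hence the chosen edge of $H_r$ is already a QE-splitting unit of $f_\#^n(E)$ for some $n$; since $B$ may equivalently be built using the iterate $f_\#^n$ in place of $f_\#$, there is a directed edge $v_M^B\to v_r^B$ in $B$, so $v_r^B$ lies in the main component, as claimed.

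To produce such a unit, use that $\lamination$ is filling, so the realization of $\ell$ in $G$ crosses every edge of $H_r$; hence some unit $\sigma$ of the QE-splitting of $\ell$ has an edge of $H_r$ in its interior. Such a $\sigma$ is a single edge, an INP, an exceptional or quasi-exceptional path, or a taken connecting path in a zero stratum. The last is impossible, since a connecting path lies in one zero stratum and zero strata contain no $H_r$-edges. If $\sigma$ is a single edge, it is the edge we seek. If $\sigma=E_iw^*\overline{E}_j$ is exceptional or quasi-exceptional, then $E_i,E_j$ are linear NEG edges, so the offending $H_r$-edge lies inside the closed root-free Nielsen path $w$; decomposing $w$ into indivisible Nielsen paths, one factor has height $\ge r$ and meets $H_r$ nontrivially. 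So in the remaining cases we may assume $\ell$ contains an INP of height $s\ge r$ that meets $H_r$ nontrivially, and Lemma~\ref{lem:INPQE-ind} applies: its first alternative would force $H_r$ to be NEG, so the second alternative holds, and in particular $H_r$ supports an INP of height exactly $r$. If, moreover, no edge of $H_r$ is a QE-splitting unit of $\ell$, then every $H_r$-edge crossed by $\ell$ lies inside a translate of one of the finitely many height-$r$ INPs $\rho^{(1)},\dots,\rho^{(N)}$.

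Ruling out this last supposition is the crux, and the technically most delicate step. Each $\rho^{(i)}$ crosses exactly one illegal turn in $H_r$, so under the supposition $\ell$ crosses only finitely many turns in $H_r$, all of them illegal; consequently every maximal legal subpath of $\ell$ contained in $H_r$ is flanked on both sides by $H_r$-edges forming illegal turns, i.e.\ $\ell$ meets $H_r$ only along concatenations of the $\rho^{(i)}$, whose endpoints form a finite set of fixed principal vertices. I expect the contradiction with the filling hypothesis to come from this confinement: it forces $\ell$, and hence $\lamination$, to be carried by a proper free factor system — essentially one collapses the EG stratum $H_r$ (which has positive first Betti number) and tightens, the finiteness of the entry/exit vertex set supplying the control needed to see that the collapsed leaf tightens to a genuine line carried by that factor system. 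Once the supposition is excluded, some edge of $H_r$ is by definition a QE-splitting unit of $\ell$, and the argument of the first paragraph finishes the proof.
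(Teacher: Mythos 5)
Your reduction and the first half of the argument track the paper closely and are correct: you reduce to showing some edge of $H_r$ is a QE-splitting unit of the generic leaf $\ell$, you note the iterate-invariance of the component structure of $B$ so that a unit of $f_\#^n(E)$ suffices, and you correctly rule out the zero-stratum case and reduce the exceptional/quasi-exceptional case to an INP of height $\geq r$ via Lemma~\ref{lem:INPQE-ind}, landing on the key supposition that $\ell$ meets $H_r$ only inside copies of height-$r$ INPs. (A small point: a CT has a \emph{unique} INP of height $r$ for each EG stratum $H_r$, which the paper uses; ``finitely many'' is harmless but the uniqueness is what makes the next step clean.)

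The crux, however, is exactly where your proposal breaks. Your sketch is to ``collapse $H_r$ and tighten'' to show $\ell$ is carried by a proper free factor system, contradicting that $\lamination$ is filling. This cannot work as stated, for two reasons. First, $H_r$ is EG, so collapsing it changes the rank and you no longer have a marked graph for $\free$; and even setting that aside, the complementary paths $\gamma_i$ of $\ell$ live in $G\smallsetminus\inter(H_r)$, which includes \emph{higher} strata as well as $G_{r-1}$, so there is no obvious proper free factor system carrying the collapsed leaf. Second, and more fundamentally, your sketch only invokes the ``filling'' hypothesis, whereas the lemma requires the full ``$\mathcal{Z}$-filling'' hypothesis precisely because the construction in the paper bifurcates: when $H_r$ is \emph{non-geometric}, one gets a free splitting, but when $H_r$ is \emph{geometric} (the height-$r$ INP $\rho$ is a closed loop), one only gets a \emph{cyclic} splitting. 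Your collapse heuristic has no mechanism to produce a cyclic-but-not-free splitting, so it cannot cover the geometric case at all. The paper's actual construction is genuinely different: it blows up the endpoint(s) of $\rho$ into short edges $E_0$ (and $E_1$) separating the $H_r$-directions from the rest, attaches a rectangle (non-geometric) or annulus (geometric) to $\rho$ and these edges to form a $2$-complex $G''$ homotopy equivalent to $G$, pushes $\ell$ over the top of the $2$-cell by a proper homotopy, and cuts along the core curve of the $2$-cell. That produces the required one-edge free or cyclic splitting in which $\ell$ is elliptic, and a separate argument (Claim~\ref{claim:non-triv}) shows the splitting is non-trivial. You have the scaffolding right, but the step you flagged as ``the technically most delicate'' is indeed the whole content of the lemma, and the route you propose for it would not close the argument.
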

\begin{proof}
  Let $H_r$ be an EG stratum.  As before, it is enough to show that
  some (every) edge of $H_r$ occurs as a QE-splitting unit of $\ell$.
  There are three types of QE-splitting units that can cross $H_r$: a
  single edge in $H_r$, an INP of height $\geq r$, or a
  quasi-exceptional path.  In the first case, we are done, so suppose
  that every time $\ell$ crosses $H_r$, the corresponding QE-splitting
  unit is an INP or a quasi-exceptional path.  We now argue that this
  situation leads to a contradiction.

  We may write $\ell$ as a concatenation
  $\ell=\ldots\gamma_1\sigma_1\gamma_2\sigma_2\ldots$ where each
  $\sigma_i$ is a QE-splitting unit of $\ell$ which intersects
  $\inter(H_r)$, and each $\gamma_i$ is a maximal concatenation of
  QE-splitting units of $\ell$ which do not intersect $\inter(H_r)$
  (some $\gamma_i$'s may be trivial).  By assumption, each $\sigma_i$
  is an INP or a QEP.  Applying Lemma \ref{lem:INPQE-ind} to each of
  the $\sigma_i$'s, then combining and breaking apart the terms
  appropriately, we see that $\ell$ can be written as a concatenation
  $\ell=\ldots\gamma_1\rho_1\gamma_2\rho_2\ldots$ where each $\rho_i$
  is the unique INP of height $r$ or its inverse.  Call this INP
  $\rho$.

  We will now use the information we have about $\ell$ to find a
  $\mathcal{Z}$-splitting in which $\ell$ is carried by a vertex
  group.  The existence of such a splitting will contradict our
  assumption that $\ell$ is a generic leaf of the
  $\mathcal{Z}$-filling lamination $\lamination$.
  
  We now modify $G$ to produce a 2-complex, $G''$, whose
  fundamental group is identified with $\free$.  First assume $H_r$ is
  non-geometric, so that $\rho$ has distinct endpoints, $v_0$ and
  $v_1$.  Let $G'$ be the graph obtained from $G$ by replacing each
  vertex $v_i$ for $i\in\{0,1\}$ with two vertices, $v_i^u$ and
  $v_i^d$ ($u$ and $d$ stand for ``up'' and ``down''), which are to be
  connected by an edge $E_i$.  For each edge $E$ of $G$ incident to
  $v_i$, connect it in $G'$ to the new vertices as follows: if
  $E\in H_r$, then $E$ is connected to $v_i^d$, and if $E\notin H_r$,
  then $E$ is connected to $v_i^u$.  $G'$ deformation retracts onto
  $G$ by collapsing the new edges, and this retraction identifies
  $\pi_1(G')$ with $\free$ via the marking of $G$.  Let
  $R=[0,1]\times [0,1]$ be a rectangle and define $G''$ by gluing
  $\{i\} \times [0,1]$ homeomorphically onto $E_i$ for $i\in\{0,1\}$,
  then gluing $ [0,1] \times \{0\}$ homeomorphically to the INP
  $\rho$.  As only three sides of the rectangle have been glued, $G''$
  deformation retracts onto $G'$, and its fundamental group is again
  identified with $\free$.

\begin{figure}[h]
  \centering{ \def\svgwidth{.5\linewidth} 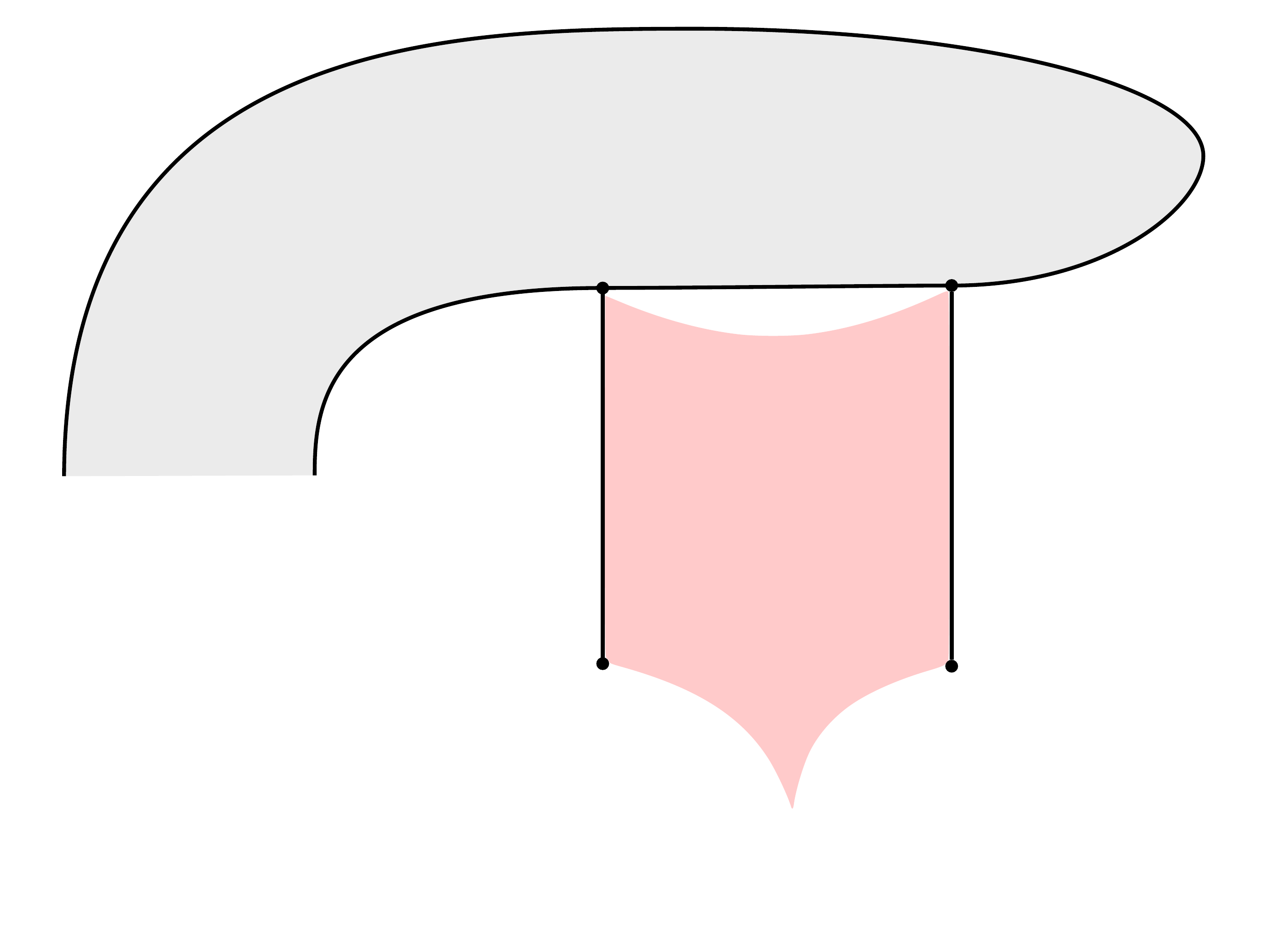
    \caption{$G''$ when $H_r$ is a non-geometric EG stratum}
    \label{fig:EG-Splitting}
  }
\end{figure}

The construction of $G''$ differs only slightly if $H_r$ is geometric.
In this case, $\rho$ is a closed loop based at $v_0$ and we blow up
$v_0$ to two vertices, $v_0^u$ and $v_0^d$, that are connected by an
edge $E_0$.  Instead of gluing in a rectangle, we glue in a cylinder
$R=S^1 \times [0,1]$; $\{p\} \times [0,1]$ is glued homeomorphically
to $E_0$ where $p$ is a point in $S^1$, and $S^1 \times \{0\}$ is
glued homeomorphically to $\rho$.

Recall that in $G$, the leaf $\ell$ can be written as a concatenation
$\ell=\ldots\gamma_1\rho_1\gamma_2\rho_2\ldots$ where each $\rho_i$ is
either $\rho$ or $\overline{\rho}$.  Thus we can realize $\ell$ in
$G'$ as $\ell=\ldots\gamma_1\rho'_1\gamma_2\rho'_2\ldots$ where each
$\rho'_i$ is either $E_0\rho\overline{E}_1$ or
$E_1\overline{\rho}\overline{E}_0$.  In $G''$, each $\rho_i'$ is
homotopic rel endpoints to a path that travels along the top of $R$,
rather than down-across-and-up.  Thus, after performing a (proper!)
homotopy to the image of $\ell$, we can arrange that it never
intersects the interior of $R$, nor the vertical sides of $R$.
Cutting $R$ along its centerline yields a $\mathcal{Z}$-splitting $S$
of $\free$, and $\ell$ is carried by a vertex group of this splitting.
If $H_r$ is non-geometric, then $S$ is a free splitting and if $H_r$
is geometric, then $S$ is a cyclic splitting. In either case, so long
as $S$ is non-trivial, we have contradicted our assumption that the
lamination is $\mathcal{Z}$-filling.
\end{proof}

\begin{claim}\label{claim:non-triv}
  The splitting $S$ is non-trivial.
\end{claim}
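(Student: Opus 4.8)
The plan is to verify that neither vertex group of $S$ coincides with the edge group, equivalently that $S$ does not collapse to the point. Recall that $S$ was produced by cutting the $2$-complex $G''$ along the centerline $\delta$ of the glued rectangle (when $H_r$ is non-geometric) or cylinder (when $H_r$ is geometric); $\delta$ is an embedded, two-sided arc, respectively circle. If $\delta$ is non-separating then $S$ is an HNN extension over the trivial group or over $\langle[\rho]\rangle$ and is automatically nontrivial, so one may assume $\delta$ separates $G''$ into two pieces, whose closures we call $Y_{\mathrm{u}}$ and $Y_{\mathrm{d}}$, with $Y_{\mathrm{u}}$ the side meeting the free edge $[0,1]\times\{1\}$ (resp.\ $S^1\times\{1\}$) of the glued piece. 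By van Kampen, $\free=\pi_1(Y_{\mathrm{u}})\ast\pi_1(Y_{\mathrm{d}})$ in the non-geometric case and $\free=\pi_1(Y_{\mathrm{u}})\ast_{\langle[\rho]\rangle}\pi_1(Y_{\mathrm{d}})$ in the geometric case, so it suffices to show that $\pi_1(Y_{\mathrm{u}})$ and $\pi_1(Y_{\mathrm{d}})$ are each nontrivial (non-geometric case), respectively strictly larger than $\langle[\rho]\rangle$ (geometric case).

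The $Y_{\mathrm{u}}$ side is easy. After the proper homotopy performed at the end of the proof of Lemma~\ref{lem:EG-B}, the leaf $\ell$ lies entirely inside $Y_{\mathrm{u}}$, so $\pi_1(Y_{\mathrm{u}})$ carries the generic leaf $\ell$ of $\lamination$. A generic leaf of an attracting lamination is a bi-infinite, birecurrent, non-periodic line, hence is carried by no rank-$\leq 1$ subgroup of $\free$; thus $\pi_1(Y_{\mathrm{u}})$ is neither trivial nor infinite cyclic, and in particular is not $\langle[\rho]\rangle$.

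The crux is the $Y_{\mathrm{d}}$ side. When $H_r$ is geometric I would invoke the structure theory of geometric strata: $H_r^z$ is carried by a compact subsurface $\Sigma$ of $G$ with $\rho$ freely homotopic to a boundary component of $\Sigma$, and because $H_r$ is EG the surface $\Sigma$ is neither a disc nor an annulus, so $\pi_1(\Sigma)$ contains $\langle[\rho]\rangle$ with infinite index; since the piece glued along $\rho$ is a cylinder rather than a disc, no class is killed and $\pi_1(\Sigma)$ injects into $\pi_1(Y_{\mathrm{d}})$, giving $\langle[\rho]\rangle\subsetneq\pi_1(Y_{\mathrm{d}})$. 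When $H_r$ is non-geometric, I would first observe that gluing the rectangle along the \emph{arc} $\rho$ does not change $\pi_1(Y_{\mathrm{d}})$, that $Y_{\mathrm{d}}$ contains the whole component $C$ of $H_r^z$ met by $\rho$, and that the vertex $w$ at which $\rho=\alpha\bar\beta$ crosses its (unique) $H_r$-illegal turn is interior to the realization of $\rho$, so all edges of $G$ at $w$ lie in $Y_{\mathrm{d}}$ as well. One then shows $\pi_1(Y_{\mathrm{d}})\neq 1$ by a rank count: $Y_{\mathrm{u}}$ is homotopy equivalent to a subgraph of $G$ containing no edge of $C$, so by the standard fact that an EG stratum strictly increases the rank of the free factor system carried by the filtration element when one passes from $G_{r-1}$ to $G_r$, the increment contributed by $C$ cannot be absorbed by $\pi_1(Y_{\mathrm{u}})$ and must appear in $\pi_1(Y_{\mathrm{d}})$. (We are entitled to assume that $H_r$ supports an indivisible Nielsen path of its own height: if it does not, then by Lemma~\ref{lem:INPQE-ind} some edge of $H_r$ is a term in the QE-splitting of $\ell$, so $H_r$ already lies in the main component of $B$ and the $G''$ construction is never reached.)

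I expect this last rank count in the non-geometric case to be the main obstacle. The difficulty is that strata above $H_r$ may attach on either side of $\delta$, that the half-rectangle glued into $Y_{\mathrm{u}}$ can itself contribute a loop to $\pi_1(Y_{\mathrm{u}})$, and that $H_r$ itself may be a tree, so the essential loop in $Y_{\mathrm{d}}$ may have to be located among the lower strata reachable through interior vertices of $\rho$ rather than inside $H_r$. Making the argument precise will therefore require careful bookkeeping of which edges of $G$ land on which side of $\delta$, combined with the EG-complexity increase; the geometric case, by contrast, is handled cleanly by surface topology.
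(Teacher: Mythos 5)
Your decomposition (cut $G''$ along the centerline $\delta$ of the glued rectangle or cylinder) is, up to deformation retraction, the same as the paper's (cut $G'$ along the blow-up edge(s) $E_0$, $E_1$ and then perform the edge fold), and your treatment of the $Y_{\mathrm{u}}$ side matches the paper's: after the proper homotopy the up side carries the non-periodic, birecurrent leaf $\ell$, so its fundamental group is not cyclic.

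The gap is on the $Y_{\mathrm{d}}$ side, and you flag it yourself: the ``rank count'' you propose for the non-geometric case is never carried out, and the bookkeeping you anticipate is genuinely the crux. The paper avoids it. Its key observation --- which your write-up misses --- is that when $G'-\{E_0,E_1\}$ (resp.\ $G'-E_0$) is disconnected, the component ${G^d}'$ containing $v_0^d$ (and $v_1^d$) is a component of $G_r$. Once this is in hand, \cite[Proposition 2.20(2)]{FH:RecognitionTheorem} together with the CT axiom (Filtration) force ${G^d}'$ to be a core graph, and since $H_r$ is EG its rank is at least two; this handles the geometric and non-geometric cases uniformly and at once. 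Your surface-topology argument in the geometric case is plausible but unnecessary once this is noticed, and the complexity bookkeeping you identify as the main obstacle is sidestepped entirely. To complete your proof along the route you chose you would essentially have to rediscover the structural fact that $Y_{\mathrm{d}}$ retracts onto a component of $G_r$; that is the missing idea, and without it the non-geometric half of the claim is unproved.
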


\begin{proof}[Proof of Claim \ref{claim:non-triv}]
  We first handle the case that $H_r$ is geometric.  We have described
  a one-edge cyclic splitting $S$ which was obtained as follows: cut
  $G'$ along the edge $E_0$, that is, collapse $G' - E_0$ to get a
  free splitting of $\free$, then perform the edge fold corresponding
  to $\langle w\rangle$ (see Section~\ref{subsec:CyclicS} for
  definition), where $w$ is the conjugacy class of the INP $\rho$.  If
  $G'-E_0$ is connected, then the free splitting is an HNN extension,
  and there is no danger of $S$ being trivial as $\rk(\free)\geq 3$.
  On the other hand, if $G'-E_0$ is disconnected, then let ${G^d}'$
  and ${G^u}'$ be the components of $G'-E_0$ containing $v_0^d$ and
  $v_0^u$ respectively.  The free splitting which is folded to get $S$
  is precisely $\pi_1({G^d}')\ast \pi_1({G^u}')$.  In this case,
  ${G^d}'$ is necessarily a component of $G_r$ and \cite[Proposition
  2.20 (2)]{FH:RecognitionTheorem} together with (Filtration) imply
  that this component is a core graph.  As $H_r$ is EG, the rank of
  $\pi_1({G^d}')$ is at least two and the splitting $S$ is therefore
  non-trivial.  To see that $\rk(\pi_1({G^u}'))\geq 1$, we need only
  recall that $\ell$ is not periodic and is carried by
  $\pi_1({G^c}')\ast\langle w\rangle$.

  In the case that $H_r$ is non-geometric, the splitting obtained
  above is a free splitting.  If $G'-\{E_0,E_1\}$ is connected, then
  the free splitting is an HNN extension, and as before $S$ is
  non-trivial. If $G'-\{E_0, E_1\}$ is disconnected, then the
  component containing $v_0^d$ (and by necessity $v_1^d$), denoted
  ${G^d}'$, corresponds to a vertex group of $S$. By the same
  reasoning as in the previous case, we get that $\pi_1({G^d}')$ is
  non-trivial.  As before, the other vertex group of $S$ carries the
  leaf $\ell$ and hence $S$ is a non-trivial free splitting.
\end{proof}

\begin{remark}
  We would like the reader to note that the above proof actually gives
  restrictions on the way two EG strata in a CT can interact.  For
  example, suppose that $\phi$ is represented by a CT,
  $f\colon G\to G$, with exactly two strata, both of which are EG.
  Assume further that $H_1$ is non-geometric and has an INP.  A priori,
  there are three ways that $H_2$ can interact with $H_1$:
  \begin{enumerate*}
  \item there is some edge $E$ in $H_2$ such that $f_{\#}(E)$ contains
    an edge from $H_1$ as a splitting unit,
  \item the $f_\#$ image of each edge in $H_2$ is entirely contained
    in $H_2$, or
  \item whenever $E$ is an edge from $H_2$ and $f_\#(E)$ crosses
    $H_1$, the corresponding splitting unit is the INP of height 1.
  \end{enumerate*}
  In the first case, $\Lambda_2\supset\Lambda_1$.  In the second case,
  we may think of the strata as being side-by-side, rather than $H_2$
  being stacked on top of $H_1$.  The proof of Lemma \ref{lem:EG-B}
  implies that the third possibility never happens.  Indeed, the proof
  provides a free splitting which is $\phi$-invariant and the vertex
  groups of this splitting form a free factor system which lies
  strictly between the free factor systems $\pi_1(G_1)$ and
  $\pi_1(G_2)$.  This contradicts (Filtration) in the definition of a
  CT, which states that the filtration $\fltr$ must be reduced.
\end{remark}

Before addressing the NEG linear strata and concluding the proof of
Proposition~\ref{prop:D-cyclic}, we present a final lemma concerning
the structure of $B$.

\begin{lemma} Assume $H_r$ is a linear NEG stratum consisting of an
  edge $E_r$.  If $v_r^B$ is not in the main component of $B$, then
  the component of $B$ containing $v_r^B$ is a single point.
\end{lemma}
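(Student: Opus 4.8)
The plan is to prove the equivalent statement that in $B$ the vertex $v_r^B$ is joined to no vertex other than itself; a self-loop at $v_r^B$ is harmless and does not enlarge its connected component. I will rule out edges of $B$ incident to $v_r^B$ in each direction separately, using the defining property of $B$: there is an edge from $v_i^B$ to $v_\ell^B$ exactly when some term of the QE-splitting of $f_\#(\kappa_i)$ is a single edge in $H_\ell$ for some $v_i^B$-path $\kappa_i$.

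First I would isolate the following structural fact: \emph{if $w$ is a closed root-free Nielsen path, then no term of a complete splitting of $w$ is a single edge lying in a non-fixed irreducible stratum}; consequently no such term is indexed by a vertex of $B$. The reason is short: if $E'$ were a term of a complete splitting $w=\cdots\cdot E'\cdot\cdots$, then by the definition of a splitting we have $f_\#^k(w)=\cdots\cdot f_\#^k(E')\cdot\cdots$ with no cancellation at the junctions of $f_\#^k(E')$ with its neighbours, for every $k\ge 1$; since $w$ is a Nielsen path, $f_\#^k(w)=w$ is a fixed finite path, so the edge-length of $f_\#^k(E')$ is bounded independently of $k$. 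This is impossible when $E'$ lies in an EG stratum or a non-fixed NEG stratum, as then $|f_\#^k(E')|$ grows without bound. Passing from the complete splitting to the QE-splitting only coarsens it, so no new single-edge terms are created.

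Next I would treat the out-edges at $v_r^B$. By (Linear Edges), $f_\#(E_r)=E_rw_r^{d_r}$ with $w_r$ a closed root-free Nielsen path, so a complete splitting of $f_\#(E_r)$ is $E_r$ followed by a complete splitting of $w_r^{d_r}$. By the structural fact, the only single-edge term of the QE-splitting of $f_\#(E_r)$ indexed by a vertex of $B$ is $E_r$ itself, so the only out-edge at $v_r^B$ is the self-loop. For the in-edges, suppose there is an edge $v_j^B\to v_r^B$ with $j\ne r$, witnessed by a $v_j^B$-path $\kappa_j$ with $E_r$ a term of the QE-splitting of $f_\#(\kappa_j)$. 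If $H_j$ is EG or a non-linear NEG stratum, then Lemma~\ref{lem:EG-B} (respectively Lemma~\ref{lem:NEGNL-B}) places $v_j^B$ in the main component of $B$, and then the edge $v_j^B\to v_r^B$ places $v_r^B$ in the main component as well, contradicting the hypothesis. If $H_j$ is a linear NEG stratum, then $\kappa_j=E_j$ and $f_\#(E_j)=E_jw_j^{d_j}$, so by the structural fact the only single-edge term of the QE-splitting of $f_\#(E_j)$ indexed by a vertex of $B$ is $E_j$, and $E_j\ne E_r$. Hence there is no in-edge at $v_r^B$ coming from another vertex.

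Combining the two preceding paragraphs, $v_r^B$ is joined in $B$ to no vertex other than itself, so the component of $B$ containing it is the single point $\{v_r^B\}$. The one genuinely delicate point is the structural fact about complete splittings of Nielsen paths; everything else is bookkeeping with the definition of $B$ and the already-established Lemmas~\ref{lem:NEGNL-B} and \ref{lem:EG-B}.
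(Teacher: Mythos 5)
Your proof is correct and takes essentially the same approach as the paper. The paper's proof is simply a terse version of yours: it asserts that the definition of $B$ rules out outgoing edges at $v_r^B$ (implicitly using your structural fact that a single edge in a non-fixed irreducible stratum cannot be a splitting term of a Nielsen-path power, since its $f_\#$-iterates grow without bound), and that any incoming edge to $v_r^B$ must originate at a non-linear NEG or EG vertex, which lies in the main component by Lemmas~\ref{lem:NEGNL-B} and~\ref{lem:EG-B}.
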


\begin{proof}
  This follows directly from the definition of $B$, together with
  Lemmas \ref{lem:NEGNL-B} and \ref{lem:EG-B}.  If $H_r$ is a linear
  NEG stratum, then the definition of $B$ implies that $v_r^B$ has no
  outgoing edges.  For any edge in $B$ whose terminal vertex is
  $v_r^B$, its initial vertex necessarily corresponds to a non-linear
  NEG stratum or an EG stratum, and hence is in the main component of
  $B$.
\end{proof}

When dealing with an NEG linear stratum, we would like to carry out a
similar strategy to the EG case: blow up the terminal vertex, $v_0$,
to an edge and glue in a cylinder, thereby producing a cyclic
splitting in which $\ell$ is carried by a vertex group.  The main
difficulty in implementing this comes from other linear edges with the
same axis; for each such edge, one has to decide whether to glue it in
$G'$ to $v_0^d$ or $v_0^u$.

Let $\mu$ be an axis with corresponding unoriented root-free conjugacy
class $w$.  Let $\mathcal{E}_\mu$ be the set of linear edges in $G$
with axis $\mu$.  Define a relation on $\mathcal{E}_\mu$ by declaring
$E\sim_R E'$ if the quasi-exceptional path $Ew^*\overline{E}'$ is a
QE-splitting unit in $\ell$ or if both $E$ and $E'$ are QE-splitting
units in $\ell$.  Then let $\sim$ be the equivalence relation
generated by $\sim_R$.  Note that all edges in $\mathcal{E}_\mu$ which
occur as QE-splitting units in $\ell$ are in the same equivalence
class.

As mentioned above, the difficulty in adapting the strategy used for
EG stratum to the present situation lies in deciding where to glue
edges (top or bottom) in $G'$.  The existence of multiple classes in
the equivalence relation $\sim$ will provide instructions for how to
glue edges from $\mathcal{E}_\mu$ in $G'$ so that the leaf never
crosses the cylinder in $G''$.

\begin{lemma}
  There is only one equivalence class of $\sim$.  Moreover, at least
  one edge in $\mathcal{E}_\mu$ occurs as a term in the QE-splitting
  of $\ell$.
\end{lemma}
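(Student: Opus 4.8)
The plan is to argue by contradiction, adapting the blow-up-and-cut construction from the proof of Lemma~\ref{lem:EG-B}. Write $\mathcal{C}_0$ for the (possibly empty) common $\sim$-class of those edges of $\mathcal{E}_\mu$ that occur as single-edge QE-splitting units of $\ell$; since $\mathcal{E}_\mu\neq\emptyset$, the assertion of the lemma is equivalent to $\mathcal{C}_0=\mathcal{E}_\mu$. So I would assume instead that there is an edge $E^*\in\mathcal{E}_\mu\setminus\mathcal{C}_0$ and then produce a non-trivial $\mathcal{Z}$-splitting $S$ in which $\ell$ is carried by a vertex group, contradicting that $\ell$ is a generic leaf of the $\mathcal{Z}$-filling lamination $\lamination$.

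First I would record the structural input. Since $\ell$ is filling, its realization in $G$ is completely split and crosses every edge of $\mathcal{E}_\mu$; moreover, by (Linear Edges), every edge of $\mathcal{E}_\mu$ has the same terminal vertex $v_0$, namely the basepoint of $w$. Using (NEG Nielsen Paths), (Linear Edges), and, inductively on height, Lemma~\ref{lem:INPQE-ind}, I would classify the QE-splitting units of $\ell$ that meet $\inter(H)$ for a linear stratum $H$ carrying an edge of $\mathcal{E}_\mu$: each such unit is either (i) a single edge of $\mathcal{E}_\mu$, (ii) an exceptional or quasi-exceptional path $E w^*\overline{E}'$ with $E,E'\in\mathcal{E}_\mu$, or (iii) an INP which, after applying Lemma~\ref{lem:INPQE-ind}, decomposes into subpaths of the form $E w^k\overline{E}$ with $E\in\mathcal{E}_\mu$ together with subpaths avoiding the interiors of the relevant linear strata. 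The point of the definition of $\sim_R$ is exactly that in cases (ii) and (iii) the two linear edges bridged by a power of $w$ lie in a common $\sim$-class; so, reading along $\ell$, every maximal ``excursion through $v_0$ via $\mathcal{E}_\mu$'' involves edges from a single $\sim$-class.

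Next I would build $S$. Blow up $v_0$ into $v_0^u,v_0^d$ joined by a new edge $E_0$; glue the two ends of the loop $w$ and every edge of $\mathcal{E}_\mu\setminus\mathcal{C}_0$ to $v_0^d$, and all remaining edges incident to $v_0$ (in particular the edges of $\mathcal{C}_0$) to $v_0^u$; then glue in a cylinder $R=S^1\times[0,1]$ identifying $S^1\times\{0\}$ with $w$ and $\{p\}\times[0,1]$ with $E_0$ (with the obvious modification if some edge of $\mathcal{E}_\mu$ is a loop, exactly as in the geometric case of Lemma~\ref{lem:EG-B}). As there, the resulting complex $G''$ deformation retracts onto $G$, so $\pi_1(G'')=\free$, and cutting $R$ along $S^1\times\{1/2\}$ yields a one-edge splitting $S$ with edge group $\langle w\rangle$. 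Then I would check that $\ell$ can be properly homotoped off the interior and vertical sides of $R$: each QE-unit of type (ii) or (iii) whose $\mathcal{E}_\mu$-edges lie in a class contained in $\mathcal{E}_\mu\setminus\mathcal{C}_0$ stays in the lower part near $v_0^d$ (its powers of $w$ being traversed on the bottom circle of $R$); each single edge of $\mathcal{C}_0$, and each type-(ii),(iii) unit whose $\mathcal{E}_\mu$-edges lie in $\mathcal{C}_0$, stays in the upper part once every subpath $\overline{E}_0 w^k E_0$ is replaced by the top loop $(S^1\times\{1\})^k$ --- the analogue of pushing $E_0\rho\overline{E}_1$ over the top of the rectangle in Lemma~\ref{lem:EG-B}; and every other QE-unit already avoids $\mathcal{E}_\mu$ and lies on the appropriate side. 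Hence $\ell$ is carried by a vertex group of $S$. Non-triviality of $S$ is then handled as in Claim~\ref{claim:non-triv}: $E^*$ is glued down while the opposite vertex group carries the aperiodic leaf $\ell$ and so has positive rank, the other side carrying $w$; the hypothesis $\rk(\free)\geq 3$, together with (Filtration) and \cite[Proposition 2.20 (2)]{FH:RecognitionTheorem}, excludes the degenerate cases. If $S$ is a free splitting this already contradicts that $\lamination$ is filling; if it is a genuine cyclic splitting it contradicts that $\lamination$ is $\mathcal{Z}$-filling.

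I expect the main obstacle to be the bookkeeping in the second and third steps: proving that the rule ``glue a whole $\sim$-class to one side'' is genuinely compatible with the way $\ell$ threads through $v_0$, i.e.\ that $\ell$ is never forced across the cut in a way that cannot be undone by a homotopy supported in $R$. This amounts to a careful strong induction mirroring Lemma~\ref{lem:INPQE-ind}, tracking for every INP occurring in $\ell$ exactly which linear edges with axis $\mu$ appear in its height-by-height decomposition and to which side each is assigned; it is also the step where one must confirm that the relation $\sim_R$, as defined, already records all of these bridgings.
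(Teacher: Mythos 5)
Your overall framework — blow up $v_0$ into $v_0^u,v_0^d$, attach a cylinder along $w$, cut along the centerline to get a splitting over $\langle w\rangle$, then argue the leaf $\ell$ can be homotoped off the cut — is exactly the paper's strategy, and the non-triviality argument you sketch via Claim~\ref{claim:non-triv} is also what the paper does.  But the \emph{gluing rule} you choose is genuinely different from the paper's, and it is precisely where your proof has a gap that you yourself flag but do not resolve.

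The paper picks a \emph{single} $\sim$-class $[E]$ that contains \emph{no} single-edge QE-splitting unit of $\ell$, glues only the edges of $[E]$ to $v_0^u$, and glues \emph{everything else} incident to $v_0$ (the other linear edges, the two $w$-directions, and all remaining non-$\mathcal{E}_\mu$ edges) to $v_0^d$.  The force of this choice is that the \emph{only} way $\ell$ can ever reach $v_0^u$ is through an edge of $[E]$, and because no edge of $[E]$ occurs as a lone QE-splitting unit, Lemma~\ref{lem:INPQE-ind} rewrites every such visit in the rigid form $E'\,w^*\,\overline{E}''$ with $E',E''\in[E]$; these are exactly the subpaths that homotope over the top of $R$.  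Every other turn of $\ell$ at $v_0$ lies entirely at $v_0^d$, so $\ell$ never touches $E_0$ except inside these controlled blocks.

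Your rule glues $\mathcal{C}_0$ \emph{and all remaining edges incident to $v_0$} to $v_0^u$, while $w$ and $\mathcal{E}_\mu\setminus\mathcal{C}_0$ go to $v_0^d$.  With this rule the assertion ``every other QE-unit already avoids $\mathcal{E}_\mu$ and lies on the appropriate side'' is unjustified: a QE-splitting unit of $\ell$ that avoids $\mathcal{E}_\mu$ may still pass through $v_0$ turning from a non-$\mathcal{E}_\mu$ edge (glued up) onto a $w$-direction (glued down), which forces an isolated crossing of $E_0$, hence of the centerline, with no matching $\overline{E}_0$ return to absorb into the top of $R$.  The same problem arises at the \emph{joints} between QE-units: once an edge $E\in\mathcal{C}_0$ is allowed to occur as a lone splitting unit, the next unit of $\ell$ can begin with a $w$-direction or another ``down'' edge, and nothing in the definition of $\sim$ or in Lemma~\ref{lem:INPQE-ind} pairs that descent through $E_0$ with a later ascent.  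There is a further unaddressed case: an INP of EG height may contain linear edges from both $\mathcal{C}_0$ and $\mathcal{E}_\mu\setminus\mathcal{C}_0$ (the relation $\sim_R$ records QEPs and lone edges, not co-occurrence inside an INP), and after decomposing via Lemma~\ref{lem:INPQE-ind} its $\beta$-pieces can thread through $v_0$ on both sides of the cut.  All of these difficulties disappear with the paper's choice, because ``everything except $[E]$'' is glued down, so the $\gamma$-pieces literally never see $v_0^u$ or $E_0$.  To repair your argument you would either have to reproduce the paper's choice of $[E]$ up and everything else down, or prove the additional bookkeeping claims you correctly identify as the main obstacle — and as stated those claims are false for the gluing you chose, not merely unproven.
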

\begin{proof}
  Suppose for a contradiction that there is more than one equivalence
  class of $\sim$ and $[E]$ be an equivalence class for which no edge
  in $[E]$ is a QE-splitting unit in $\ell$.  Now build $G'$ as in the
  proof of Lemma \ref{lem:EG-B}.  Let $v_0$ be the terminal vertex of
  the edges in $\mathcal{E}_\mu$ (they all have the same terminal
  vertex), and define $G'$ by blowing up $v_0$ into two vertices,
  $v_0^u$ and $v_0^d$, which are connected by an edge $E_0$.  The
  terminal vertex of each edge of $[E]$ is to be glued in $G'$ to
  $v_0^u$, while all other edges in $G$ that are incident to $v_0$ are
  glued to $v_0^d$.  Define $G''$ as before, gluing the bottom of a
  cylinder $R$ along the closed loop $w$, and gluing the vertical
  interval above $v_0$ homeomorphically to the edge $E_0$.

  The definition of $\sim$ guarantees that $\ell$ is carried by a
  vertex group of the cyclic splitting determined by cutting along the
  centerline of $R$.  Indeed, whenever $\ell$ crosses an edge from
  $[E]$, the corresponding QE-splitting unit is either an INP or a
  quasi-exceptional path $E'w^*\overline{E}''$, where $E',E''\in [E]$.
  Repeatedly applying Lemma \ref{lem:INPQE-ind} to each of these
  terms, then rearranging and combining terms appropriately, we see
  that $\ell$ can be written in $G$ as a concatenation
  $\ell=\ldots\gamma_1\rho_1\gamma_2\rho_2\ldots$ where each $\rho_i$
  is either $E'w^*\overline{E'}$ or $E'w^*\overline{E''}$ with
  $E',E''\in [E]$.  Thus we can realize $\ell$ in $G'$ as
  $\ell=\ldots\gamma_1\rho'_1\gamma_2\rho'_2\ldots$ where each
  $\rho'_i$ is $E'E_0w^*\overline{E}_0\overline{E'}$ or
  $E'E_0w^*\overline{E}_0\overline{E''}$.  In $G''$, each $\rho_i'$ is
  homotopic rel endpoints to a path that travels along the top of $R$,
  rather than down-across-and-up.  Thus, we have again produced a
  cyclic splitting in which $\ell$ is carried by a vertex group.

  We now argue that the splitting is non-trivial.  There is a free
  splitting $S$ which comes from cutting the edge $E_0$ in $G'$, which
  cannot be a self loop.  The cyclic splitting of interest $S'$ is
  obtained from $S$ by performing the edge fold corresponding to $w$.  If
  $G'-E_0$ is connected, then $S'$ is an HNN extension with edge group
  $\langle [w]\rangle$.  As $\rk(\free)\geq 3$, the vertex group has
  rank at least two and we are done.  Now suppose $E_0$ is separating
  so that $G'-E_0$ consists of two components.  Let ${G'}^u$ be the
  component containing the vertex $v_0^u$ and let ${G'}^d$ be the
  other component.  The vertex groups of the splitting $S'$ are
  $\pi_1(G^d)$ and $\pi_1(G^u)\ast \langle [w]\rangle$.  The fact that
  $v$ is a principal vertex guarantees that
  $\pi_1(G^d)\not\cong \mathbb{Z}$, and the fact that $G$ is a finite
  graph without valence one vertices ensures that $\pi_1(G^u)$ is
  non-trivial.

  The proof of the second statement is exactly the same as that of the
  first.
\end{proof}

Finally, we finish the proof of Proposition~\ref{prop:D-cyclic}.  As
before, $B_1$ is the main component of $B$, with corresponding almost
invariant subgraph $X_1$.  All other components $B_2,\ldots, B_K$ are
single points, and each almost invariant subgraph $X_i$ consist of a
single linear edge.  Let $(a_1,\ldots,a_K)$ be a $K$-tuple and suppose
that $a_1$ has been chosen.  We claim that imposing the admissibility
condition determines all other $a_i$'s.

Suppose first that $E_i,E_j$ are linear edges with the same axis,
$\mu$, such that $E_i\in X_1$, $E_j\in X_k$, and $E_i\sim_R E_j$.  Let
$d_i$ and $d_j$ be the exponents of $E_i$ and $E_j$ respectively.
Applying the definition of admissibility with $s=r=1$, $t=k$, and
$\kappa$ a $v^B$ path such that $f_\#(\kappa)$ contains a
quasi-exceptional path of the form $E_iw^*\overline{E}_j$ in its
QE-splitting (such a $\kappa$ must exist as a quasi-exceptional path
of this type occurs in the QE-splitting of $\ell$), we obtain the
relation $a_1(d_i-d_j)=a_1d_i-a_kd_j$.  Thus $a_k$ is determined by
$a_1$.

Now suppose $E_i$ and $E_j$ are as above, but rather than being
related by $\sim_R$, we only have that $E_i\sim E_j$.  There is a
finite chain of $\sim_R$-relations to get from $E_i$ to $E_j$.  At
each stage in this chain, the definition of admissibility (applied
with $r=1$ and $\kappa$ chosen appropriately) will impose a relation
that determines the next coordinate from the previous ones.
Ultimately, this determines $a_k$.

We have thus shown that an admissible tuple is completely determined
by choosing $a_1$, and therefore that the set of admissible tuples
forms a line in $\mathbb{Z}^K$.  Therefore $\mathcal{D}(\phi)$ is
virtually cyclic.

\subsection{A Converse}
\begin{proposition}\label{prop:converse}
  If $\phi$ has a filling lamination which is not
  $\mathcal{Z}$-filling, then the centralizer of some power of $\phi$ in
  $\Out(\free)$ is not virtually cyclic.
\end{proposition}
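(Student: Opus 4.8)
The plan is to exhibit, inside the centralizer of a suitable power $\phi^N$, a partial conjugation $D$ of infinite order commuting with $\phi^N$; since $D$ grows at most linearly while $\phi$ grows exponentially, $\langle\phi^N,D\rangle$ --- and hence $C(\phi^N)$ --- cannot be virtually cyclic. For the first step, since $\lamination$ is filling but not $\mathcal{Z}$-filling it is carried by a vertex group of some one-edge cyclic splitting, so Proposition~\ref{prop:v-group} furnishes a one-edge cyclic splitting $S'$, in which $\lamination$ is carried by a vertex group $V$, that is fixed by a power of $\phi$; I would replace $\phi$ by that power. Because $\lamination$ is filling, $S'$ is not a free splitting, so its edge group is infinite cyclic, say $\langle w\rangle$; by the construction in Proposition~\ref{prop:v-group} one may take $S'$ reduced, and $V$ has rank $\geq 2$ (a generic leaf of $\lamination$ is a non-periodic line, so cannot be carried by a cyclic group).

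Next I would take $D\in\Out(\free)$ to be the partial conjugation supported on $S'$: writing $S'=A\ast_{\langle w\rangle}B$ with $\lamination$ carried by $A$, let $D$ be the identity on $A$ and conjugation by $w$ on $B$ (this is well defined since conjugation by $w$ fixes $\langle w\rangle$ pointwise); this is the Dehn twist of $S'$ along $w$, and the HNN case is analogous. When $B$ is non-cyclic --- which holds for a reduced $S'$ outside a degenerate configuration that must be treated separately --- a short computation, using that no nontrivial element of a free group centralizes a non-cyclic subgroup, shows $D^n$ is not inner for $n\neq 0$, so $D$ has infinite order. Since $\phi$ fixes $S'$ and there is a single edge orbit, $\phi$ preserves the conjugacy class of $\langle w\rangle$ up to inversion and, after an inner adjustment, is represented by an automorphism preserving $\langle w\rangle$ and each vertex group of $S'$; passing to a further power one obtains a representative fixing $w$ and each vertex group on the nose. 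Conjugating $D$ by this representative returns $D$, so $\phi^N D=D\phi^N$ in $\Out(\free)$, where $\phi^N$ is the power of $\phi$ reached after these finitely many passages --- the analogue for cyclic splittings of the fact that a mapping class fixing a curve commutes with the twist about it.

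To conclude, $\langle\phi^N,D\rangle\leq C(\phi^N)$ is infinite since $\phi^N$ has infinite order. Were it virtually cyclic it would contain a finite-index infinite cyclic subgroup, and nonzero powers of both $\phi^N$ and $D$ would lie in it, whence $\phi^{Np}=D^{q}$ for some $p\neq 0$, $q\neq 0$; after changing signs take $p>0$. But $\phi^{Np}$ still has $\lamination$ among its attracting laminations, so it has an exponentially growing stratum and is exponentially growing, whereas $D^{q}$ is a nonzero power of a Dehn twist and grows at most linearly; no automorphism is both, a contradiction. Hence $C(\phi^N)$ is not virtually cyclic, which is the assertion.

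The main obstacle is the commutation identity $\phi^N D\phi^{-N}=D$ in $\Out(\free)$: one must choose the representative automorphism of $\phi$ so as to preserve both vertex groups of $S'$ and fix the edge-group generator $w$ exactly, and keep careful track of the several passages to powers this requires. The secondary delicate point is verifying that $D$ has infinite order, which forces one to rule out (or handle separately) a cyclic vertex group adjacent to the twisting edge; this is where the reducedness and canonicity of $S'$ furnished by Proposition~\ref{prop:v-group} are used. Everything else is routine.
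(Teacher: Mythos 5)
Your proposal follows essentially the same route as the paper: invoke Proposition~\ref{prop:v-group} to obtain a one-edge cyclic splitting $S'$ fixed by a power of $\phi$, take the Dehn twist $D_w$ about $S'$, and show it commutes with that power. The paper's proof writes out the commutation computation on generators separately in the amalgam and HNN cases (the step you summarize as ``conjugating $D$ by this representative returns $D$''), whereas you are more explicit at the end: the paper concludes only that the centralizer contains an infinite-order element other than a power of $\phi^k$, while you spell out why $\langle\phi^N,D\rangle$ cannot be virtually cyclic (a relation $\phi^{Np}=D^q$ is impossible since one side is exponentially growing and the other polynomially). That extra sentence is a genuine improvement in rigor over the paper's phrasing. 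One small caution: you flag, correctly, that $D$ being non-inner requires the vertex group on the twisted side to be non-cyclic, and you defer the ``degenerate configuration'' where it is cyclic. In the amalgam case with $\mathcal{Z}$-edge group this degenerate configuration can in principle occur (edge group $\langle b^k\rangle$ of finite index $k\geq 2$ in a cyclic vertex group $\langle b\rangle$; note such a splitting is reduced in Forester's sense), and there the Dehn twist about $S'$ really is inner, so ``treated separately'' would need an actual argument. The paper's proof makes the same tacit assumption without flagging it, so this is not a defect relative to the paper, but you should be aware the deferral is not vacuous.
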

\begin{proof}
  Since $\phi$ has filling lamination which is not
  $\mathcal{Z}$-filling, it follows by Proposition~\ref{prop:v-group}
  that for some $k$, $\phi^k$ fixes a one-edge cyclic splitting $S$.

  Suppose $S/\free$ is a free product with amalgamation with vertex
  stabilizers $\la A, w \ra$ and $B$ and edge group
  $\la w \ra \subset B$. Consider the Dehn twist $D_w$ given by $S$ as
  follows: $D_w$ acts as identity on $B$ and conjugation by $w$ on
  $A$. The automorphism $D_w$ has infinite order. We claim that $D_w$
  and $\phi^k$ commute. Indeed, consider a generating set
  $\{a_1, \ldots, a_k, b_1, \ldots b_m\}$ for $\free$ such that the
  $a_i$'s generate $A$ and the $b_i$'s generate $B$.  Choose a
  representative $\Phi$ of $\phi$ such that $\Phi^k(B) = B$ and
  $\Phi^k(\la A, w \ra) = \la A, w \ra^b$ for some element $b \in B$.
  Since $D_w$ is identity on $B$ and $\Phi^k(B) = B$, we have
  $\Phi^k(D_w(b_i)) = D_w(\Phi^k(b_i))$ for all generators $b_i$.  Since
  $D_w(a_i) = wa_i\overline{w}$, $\Phi^k(w) = w$ and
  $\Phi^k(\la A, w \ra) = \la A, w \ra^b$, we have
  $D_w(\Phi^k(a_i)) = \Phi^k(D_w(a_i))$ for all generators $a_i$.  Thus
  $D_w$ and $\phi^k$ commute.

  We now address the case that $S/ \free$ is an HNN extension.  Assume
  $S/\free$ has stable letter $t$, edge group $\la w \ra$ and vertex
  group $\la A, \overline{t}wt \ra$.  Since the cyclic splitting $S$ is obtained from a free HNN extension, with vertex group $A$ and stabe letter $t$, by an edge fold, we have that a basis of $\free$ is given
  by $\{a_1, a_2, \ldots, a_k, t \}$, where the $a_i$'s generate
  $A$. Consider the Dehn twist $D_w$ determined by $S$ such that $D_w$
  is identity on $A$ and sends $t$ to $wt$.  The automorphism $D_w$ has
  infinite order.  Choose a representative $\Phi$ of $\phi$ such that  $\la A, \overline{t}wt \ra$ is
  $\Phi^k$-invariant. Then for every generator $a_i$, $\Phi^k(a_i)$ is a word
  in the $a_i$s and $\overline{t}wt$.  Since $D_w$ is identity on $A$
  and fixes $\overline{t}wt$, we get
  $\Phi^k(D_w(a_i)) = D_w(\Phi^k(a_i))$.  Again, since
  $\la A, \overline{t}wt \ra$ is $\Phi^k$-invariant, $\Phi^k(t)$ is equal
  to $w^m t\alpha$, where $\alpha$ is some word in
  $\la A, \overline{t}wt\ra$ and $m \in \mathbb{Z}$. On one hand,
  $\Phi^k(D_w(t)) = \Phi^k(wt) = \Phi^k(w) \Phi^k(t) = w w^m t \alpha$ and on
  the other hand,
  $D_w(\Phi^k(t)) = D_w(w^m t \alpha) = w^m D_w(t) D_w(\alpha) = w^m w t
  \alpha$. Thus $D_w$ and $\phi^k$ commute.

  Thus when $\phi^k$ fixes a cyclic splitting, then an infinite order
  element other than a power of $\phi^k$ exists in the centralizer of
  $\phi^k$.
\end{proof}

\bibliographystyle{alpha}
\bibliography{BibFile}
\end{document}